\documentclass[12pt,a4paper,reqno]{amsart} 
\usepackage[titletoc]{appendix}
\usepackage[margin=1in]{geometry}
\usepackage{amsfonts,amssymb,amscd,amsthm,amsmath,mathtools,hyperref,fancybox,color,float,xcolor,mathrsfs,cleveref}
\usepackage[shortlabels]{enumitem}

\usepackage[noadjust,compress]{cite}
\bibliographystyle{unsrt}

\mathchardef\ordinarycolon\mathcode`\:
\mathcode`\:=\string"8000
\begingroup \catcode`\:=\active
\gdef:{\mathrel{\mathop\ordinarycolon}}
\endgroup
\makeatletter
\renewcommand*\env@matrix[1][\arraystretch]{%
	\edef\arraystretch{#1}%
	\hskip -\arraycolsep
	\let\@ifnextchar\new@ifnextchar
	\array{*\c@MaxMatrixCols c}}
\makeatother
\theoremstyle{plain}
\newtheorem{theorem}{Theorem}[section]
\newtheorem{lemma}[theorem]{Lemma}
\newtheorem{corollary}[theorem]{Corollary}

\theoremstyle{definition}
\newtheorem{definition}[theorem]{Definition}
\newtheorem{example}[theorem]{Example}
\newtheorem{hypothesis}[theorem]{Hypothesis}

\theoremstyle{remark}
\newtheorem{remark}[theorem]{Remark}

\title[Stability of Reset and Impulsive Continuous-time Linear Switched Systems]{Stability of Reset and Impulsive Linear Switched Systems} 
\author[Swapnil Tripathi]{Swapnil Tripathi}
\address{Department of Mathematics\\
	Indian Institute of Science Education and Research Bhopal\\
	Bhopal Bypass Road, Bhauri \\
	Bhopal 462 066, Madhya Pradesh\\
	India}
\email{swap\_trip@outlook.com}
\author[Nikita Agarwal]{Nikita Agarwal}
\address{Department of Mathematics\\
	Indian Institute of Science Education and Research Bhopal\\
	Bhopal Bypass Road, Bhauri \\
	Bhopal 462 066, Madhya Pradesh\\
	India}
\email{nagarwal@iiserb.ac.in}

\date{\today}

\begin{document}
	
\begin{abstract}
	We study stability issue of reset and impulsive switched systems. We find time constraints (dwell time and flee time) on switching signals which stabilize a given reset switched system. For a given collection of matrices, we find an assignment of resets and time constraints on switching signals which guarantee stability of the reset switched system. Similar results are obtained for impulsive switched systems as well. Two techniques, namely, analysis of flow of the system and the multiple Lyapunov function approach is used to obtain the results. The results are later generalized to obtain mode-dependent time constraints for stability of these systems.
\end{abstract}

\maketitle


\noindent \textbf{Keywords}: Piecewise continuous dynamical systems, control theory, reset switched systems, impulsive switched systems, dwell time, flee time, stability.\\
\noindent \textbf{2020 Mathematics Subject Classification}: 37N35 (Primary); 93C05, 93D05 (Secondary)

\section{Introduction}

A continuous-time switched linear system is a special kind of time-variant system, which comprises a family of linear time-invariant subsystems and a switching law which determines the active subsystem at any given instant. For switched systems, various stability issues, such as, stability under arbitrary switching~\cite{agrachev2001lie,narendra1994common,yang2012sufficient} and stability under constrained switching~\cite{agarwal2018simple,hespanha1999stability,tripathi2021bistable,tripathi2022unistable}, are addressed in the literature. These issues stem from the fact that a switched system can be unstable even if all the subsystems involved are stable~\cite{liberzon2003switching}. When working with switched systems, it is usually assumed that the state evolves continuously at the switching instances. However, impulses are commonly experienced in real life systems when there are abrupt changes in the system, for instance, in a router with multiple buffers~\cite{li2005switched} and certain drug administration procedures~\cite{lakshmikantham1989theory}. It is, thus, natural to consider switched systems with jumps or impulses at switching instances. 

Impulses experienced by a system are usually viewed as either disturbances, for example, packet loss during data transmission through a router, or a consequence of impulsive control of continuous-time systems~\cite{chen2014designing}. A well-known approach to achieve stability is resetting the state at switching instances, see~\cite{paxman2003stability} for switched systems and~\cite{xiao2019stabilization} for switched singular systems. Here, the state jump occurring at any given switching instant depends on the subsystems active before and after the switching instant. Such systems are called \textit{reset switched systems}. It is known that when all subsystems are asymptotically stable, there exist resets such that the reset switched system is asymptotically stable~\cite{bras2012stability}. However, if the set of available resets is restricted, it may not always be possible to stabilize the reset switched system no matter how the resets are assigned.

Reset switched systems have been extensively studied in the literature. These were first introduced in~\cite{hespanha2002switching} where the authors propose to use resets in the state of a multi-controller in order to stabilize a given plant. Similar architecture is studied in~\cite{hespanha2007optimal,paxman2003stability}. Reset switched systems model processes where all components of the state are available for reset. When not all components of the state are available for reset, stability of specific systems can also be achieved using partial resets~\cite{bras2012stability}. It is known that systems consisting of partially commuting stable subsystems can be stabilized using partial state resets, see~\cite{bras2017stability}. Also in~\cite{carapito2019stabilization}, it is proved that a positive switched system can always be stabilized by using partial state resets from the set of Metzler matrices. Unlike reset switched systems where the state reset depends on the subsystems active before and after the switching instant, there is a class of impulsive switched systems in which the state jumps depend just on the switching instant. These systems are, thus, fundamental in understanding real-life systems with external disturbances. Several issues such as finite-time stability~\cite{wang2013finite}, and asymptotic stabilization of impulsive switched systems using feedback control~\cite{xu2008lmi} have been addressed in the literature. 

In this paper, we will study the dynamics of both reset switched systems and impulsive switched systems. As mentioned earlier, asymptotic stabilization of switched systems using state resets and constrained switching have extensively, and independently, been explored in the literature. However, to the best of our knowledge, these issues have not been discussed simultaneously. In this paper, we will explore this issue. Two main questions arise in this study. The first one is concerned with finding time constraints (dwell time and flee time) on switching signals which stabilize a given reset switched system. The other one pertains to the case when resets are not preassigned but a large collection is given from which resets can be chosen. In this case, the natural question is of finding an assignment of resets and time constraints on switching signals which guarantee stability of the reset switched system. Similar questions are asked for impulsive switched systems as well. We also discuss the situation when the set of resets or impulses, as the case may be, is a convex hull of matrices. Further we obtain mode-dependent dwell time and flee time constraints for stability of the reset and impulsive switched systems.

\section{Preliminaries and Problem Setup}

In this section, we first define the three types of time-dependent continuous-time linear switched systems. Then we define various stability notions for these systems. We then discuss notions of dwell time and flee time for a switching signal. At the end of this section, we will discuss issues that will be the focus of this work. 

\subsection{A Linear Switched System}
Let $\{A_p\colon\,p\in\mathcal{P}\}$ be a finite collection of real matrices of size $n$, where $\mathcal{P}$ is a fixed finite index set. A \textit{time-dependent linear switched system}, $\Sigma_\sigma$, is defined as
\begin{eqnarray}\label{eq:system}
	\Sigma_\sigma\colon\ \dot{x}(t)&=&A_{\sigma(t)}x(t), 
\end{eqnarray}	
where $x(t)\in\mathbb{R}^n$ and $\sigma\colon[0,\infty)\to \mathcal{P}$ is a right-continuous piecewise constant function, known as a \textit{switching signal}. Here $\dot{x}(t)$ denotes the derivative of the state $x(t)$ with respect to the time variable $t$. The switched system~\eqref{eq:system} has $\lvert\mathcal{P}\rvert$ many subsystems given by $\dot{x}(t)=A_px(t)$ for each $p\in \mathcal{P}$. If $\sigma(t)=p$, the dynamics of the switched system is governed by the subsystem $\dot{x}(t)=A_{p}x(t)$. The \textit{switching signal} $\sigma$ is, thus, a rule determining the \textit{active subsystem} at any time instant $t$. 

\subsection{The Classic Reset Switched System}
Let $\{A_p\colon\,p\in\mathcal{P}\}$ be a finite collection of real matrices of size $n$, where $\mathcal{P}$ is a fixed finite index set, as given above. Let $\sigma\colon[0,\infty)\to \mathcal{P}$ be a switching signal (with properties defined above) having the set of discontinuities as $\{t_k\}_{k\ge 1}$ satisfying $t_i<t_{i+1}$, for all $i\ge 1$. Let $\mathcal{R}=\{R_{(p,q)}\colon\ p,q\in\mathcal{P}\}$ be a collection of real matrices of size $n$. The matrices in this collection are known as \textit{reset matrices}. A \textit{reset switched system}, $\Sigma_\sigma^{\mathcal{R}}$, is defined as \begin{eqnarray}\label{eq:RSS}
	\Sigma_\sigma^{\mathcal{R}}\colon\ \ 
		\dot{x}(t)=A_{\sigma(t)}x(t), \ \ x(t_k)=R_{(\sigma(t_{k-1}),\sigma(t_k))}\, x(t_k^-),
\end{eqnarray}
where $x(t_k^-)=\lim_{t\to t_k^-}x(t)$. Thus, in a reset switched system, at each switching instant, the state $x(t)$ changes according to a certain linear reset map from the collection $\mathcal{R}$. The reset map does not depend on the time at the switching instant, but on the subsystems active just before and just after the switching instant. In particular, if the subsystem corresponding to the matrix $A_p$ is active just before the switching time $t_k$ and the subsystem corresponding to the matrix $A_q$ is active just after the switching time $t_k$, then $R_{(\sigma(t_{k-1}),\sigma(t_k))}=R_{(p,q)}$. This gives rise to a switched system with linear jumps at the switching instances as long as not all reset matrices $R_{(\sigma(t_{k-1}),\sigma(t_k))}$, for $k\ge 1$, are identity matrices. Such a system has a discontinuous flow at several (possibly all) switching instants. 
An example is shown in \Cref{fig:ResetSwitchedSystem}.

\begin{figure}[h!]
	\centering
	\includegraphics[width=\textwidth]{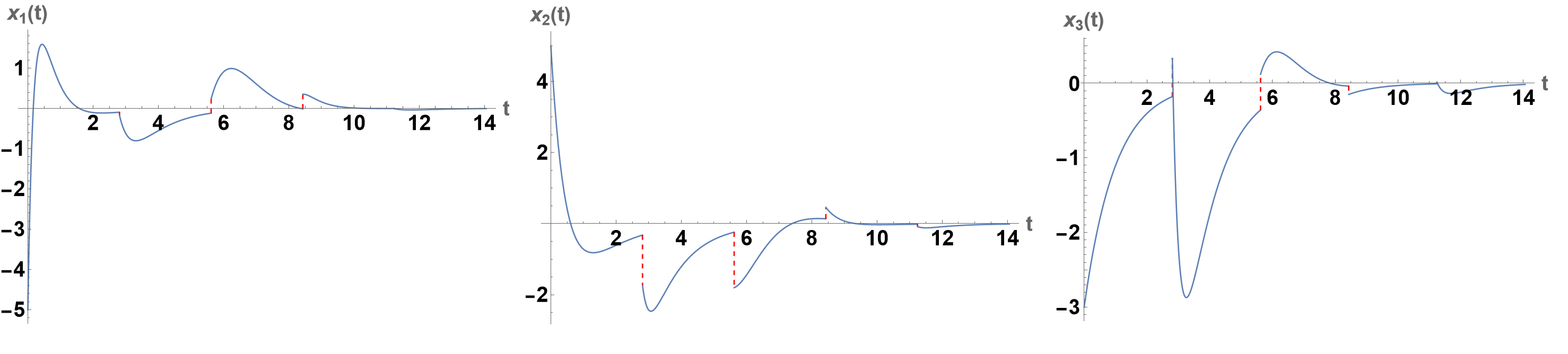}
	\caption{Refer to \Cref{eg:3dArbReset}. The reset switched system with subsystems $A_8$, $A_9$, $A_{10}$ and $R_{(p,q)}=M_2$, for all $p,q\in \{1,2,3\}$. Here, $(x_1(t),x_2(t),x_3(t))^\top$ is the flow of the reset switched system with initial condition $(-5,5,-3)^\top$ under the periodic switching signal $\sigma$ satisfying $t_{k-1}-t_k=2.81$ and $\sigma(t_k)=(k+1)\pmod 3$, for all $k\in\mathbb{N}$. Dashed trajectory denotes jumps at switching instances.}
	\label{fig:ResetSwitchedSystem}
\end{figure}

\subsection{A Linear Impulsive Switched System}
Let $\{A_p\colon\,p\in\mathcal{P}\}$ be a finite collection of real matrices of size $n$, where $\mathcal{P}$ is a fixed finite index set, as given above. Let $\sigma\colon[0,\infty)\to \mathcal{P}$ be a switching signal (with properties defined above) having the set of discontinuities $\{t_k\}_{k\ge 1}$ satisfying $t_k<t_{k+1}$, for all $k\ge 1$. Consider the collection $\mathcal{I}=\{I_q\colon\,q\in\mathcal{Q}\}$ of real matrices of size $n$, where $\mathcal{Q}$ is a fixed index set. Each matrix in this collection is known as a \textit{linear impulse}. Let $\omega\colon\{t_1,t_2,\dots\}\to\mathcal{Q}$ be a function. A \textit{linear impulsive switched system}, $\Sigma_{\sigma,\omega}^{\mathcal{I}}$, is defined as  \begin{eqnarray}\label{eq:ISS}
	\Sigma_{\sigma,\omega}^{\mathcal{I}}\colon \ \ 
		\dot{x}(t)=A_{\sigma(t)}x(t), \ \ x(t_k)=I_{\omega(t_k)}\, x(t_k^-),
\end{eqnarray}
where $x(t_k^-)=\lim_{t\to t_k^-}x(t)$. 

Observe that, by setting $\mathcal{Q}=\mathcal{P}\times \mathcal{P}$ and $\omega(t_k)=(\sigma(t_k^-),\sigma(t_k))$ for each $k\ge 1$, the reset switched system~\eqref{eq:RSS} is indeed an impulsive switched system. Moreover, the impulsive switched system is more robust to disturbances than the reset switched system since it allows for resets at each switching instant that do not depend on the subsystem active just before and just after the switching instant. It, hence, serves as a model for systems where disturbances or impulses are dependent only on the time at the switching instant. It should also be noted that the presence of impulses may destabilize an otherwise stable system, examples will be discussed later.

\subsection{Stability notions}

Each of the switched systems~\eqref{eq:system},~\eqref{eq:RSS},~\eqref{eq:ISS} induces a flow on the state space $\mathbb{R}^n$. The dynamics of points on the state space under this flow gives rise to various stability notions which we define below. Let $x(t)$ denote the flow of a point $x(0)$, where the dependence on $\sigma$ has been suppressed for notational simplicity. The switched system is said to be  \begin{enumerate}
	\item \textit{stable} if for every $\epsilon>0$, there exists $\delta>0$ such that $\| x(0)\|<\delta$ implies $\|x(t)\|<\epsilon$ for all $t>0$, and
	\item \textit{asymptotically stable} if it is stable and $\Vert x(t)\Vert\to 0$ as $t\to\infty$ for every initial vector $x(0)\in\mathbb{R}^n$, 
\end{enumerate} 
The above definitions are valid for more general switched systems and not just for systems of the forms~\eqref{eq:system},~\eqref{eq:RSS}, and~\eqref{eq:ISS}. Note that the asymptotic stability is a strictly stronger notion than just stability, by definition. We say that the impulsive switched system~\eqref{eq:ISS}, for a given switching signal $\sigma$, is \textit{stable under arbitrary impulses} if the switched system~\eqref{eq:ISS} is stable for any choice of $\omega$. A matrix $A$ is said to be \textit{Hurwitz stable} if all its eigenvalues lie to the left of the imaginary axis. In this case, we will call $A$, just a \textit{stable matrix} throughout this paper. A matrix $A$ is known as an \textit{unstable matrix} if at least one of its eigenvalues lies to the right of the imaginary axis. With reference to the switched systems defined above, a subsystem $\dot{x}(t)=A_px(t)$ is said to be a \textit{stable subsystem} if the subsystem matrix $A_p$ is stable. It is known as an \textit{unstable subsystem} if the subsystem matrix is unstable. 

			
			\subsubsection{Switching Signals}\label{sec:NotationsAndIssues}
			Let $\{t_k\colon\,k\in\mathbb{N}\}$ be the set of discontinuities of a switching signal $\sigma$, which are known as the \textit{switching times} for the signal $\sigma$. We will consider only those switching signals $\sigma$ which satisfy the following property: $\{t_k\}$ is an increasing sequence with $t_k\uparrow \infty$ and $t_0=0$.
			
			\begin{definition}[Time constraints on signals -- Dwell time and Flee time]
				We say that a switching signal $\sigma$ has \textit{dwell time $\tau>0$ in a stable subsystem} if $t_{i+1}-t_i\ge \tau$ whenever $A_{\sigma(t_i)}$ is a stable matrix. We say that a signal $\sigma$ has \textit{flee time $\eta>0$ in an unstable subsystem} if $t_{i+1}-t_i\le \eta$ whenever $A_{\sigma(t_i)}$ is an unstable matrix. When all the subsystem matrices are stable, we only talk about dwell time. Similarly when all the subsystem matrices are unstable, we only talk about flee time.
			\end{definition}
		
			We now define the following classes of switching signals that will be used and referred to throughout the paper. For $\tau,\eta>0$, define:
			\begin{eqnarray*}\label{eq:class_signals}
				\mathcal{S}_s[\tau]&=&\left\{\sigma\colon[0,\infty)\to\mathcal{P}\vert\ t_k-t_{k-1}\ge \tau, \text{ for all }k\in\mathbb{N}\right\},\nonumber\\
				\mathcal{S}_u[\eta]&=&\left\{\sigma\colon[0,\infty)\to\mathcal{P}\vert\ t_k-t_{k-1}\le \eta, \text{ for all }k\in\mathbb{N}\right\},\nonumber\\
				\mathcal{S}[\tau,\eta]&=&\left\{\sigma\colon[0,\infty)\to\mathcal{P}\vert\ t_k-t_{k-1}\ge \tau,\ \text{ for stable } A_{\sigma(t_{k-1})},\text{ and }\right.\nonumber\\
				& &\hspace{85pt}\left.t_k-t_{k-1}\le \eta,\ \text{ for unstable } A_{\sigma(t_{k-1})}, \ k\in\mathbb{N}\right\}.
			\end{eqnarray*}
			
			The class $\mathcal{S}_s[\tau]$ is relevant only when all the subsystem matrices of the switched system are stable. As defined earlier, the quantity $\tau>0$ is known as dwell time for any signal $\sigma\in \mathcal{S}_s[\tau]$. Moreover the class $\mathcal{S}_u[\eta]$ is relevant only when all the subsystem matrices of the switched system are unstable. As defined earlier, the quantity $\eta>0$ is known as flee time for any signal $\sigma\in \mathcal{S}_u[\eta]$. Note that if all the subsystem matrices are stable, then the collection $\mathcal{S}[\tau,\eta]$ is nothing but the collection $\mathcal{S}_s[\tau]$. Further if all the subsystem matrices are unstable, then the collection $\mathcal{S}[\tau,\eta]$ is nothing but the collection $\mathcal{S}_u[\eta]$.
			
			As mentioned earlier, if the set of available impulses is restricted, it is not always possible to stabilize the switched system~\eqref{eq:ISS} under arbitrary choice of impulses. Simple examples exhibiting this can be constructed when the only available reset is the identity matrix. This leads us to the question of stabilizing a given reset switched system (or impulsive switched system) using time constraints on the signals. Moreover, if the set of resets (or impulses) is known, the question of finding time constraints for stability of switched system for arbitrary assignment of resets (or impulses) is an important issue and can be used to stabilize real-life systems with bounded disturbances. It is mathematically formulated as issue~\ref{issue4} below.
			
			The above discussion gives rise to the following issues pertaining to reset switched system~\eqref{eq:RSS} and impulsive switched system~\eqref{eq:ISS}.
			\begin{enumerate}[label={I.\arabic*}]
				\item\label{issue1} Given time constraints on $\sigma$, is there a collection of reset maps $\mathcal{R}$ so that the reset switched system $\Sigma_{\sigma}^{\mathcal{R}}$ is stable.
				\item\label{issue2} Given time constraints on $\sigma$, is there a collection of impulses $\mathcal{I}$ so that the impulsive switched system $\Sigma_{\sigma,\omega}^{\mathcal{I}}$ is stable for any $\omega$.
				\item\label{issue3} Given a set of resets $\mathcal{R}$, finding time constraints on $\sigma$ so that the reset switched system $\Sigma_{\sigma}^\mathcal{R}$ is stable.
				\item\label{issue4} Given a set of impulses $\mathcal{I}$, finding time constraints on $\sigma$ so that the impulsive switched system $\Sigma_{\sigma,\omega}^\mathcal{I}$ is stable for any $\omega$.
			\end{enumerate}

			In this paper, we will make use of two techniques to study the issues raised above dealing with stability for the switched systems. The first one is by analyzing the Euclidean norm of the flow of switched system, discussed in \Cref{sec:flow}. The other one is by constructing multiple Lyapunov functions, discussed in \Cref{sec:MLF}. We then present, in \Cref{sec:convexhull}, results for the case when the impulses experienced by the system lie in the convex hull of finitely many matrices. In \Cref{sec:MDDT}, we obtain mode dependent constraints for stability of impulsive switched systems, followed by numerical examples in \Cref{sec:examples}. 
			
			Throughout the paper, the collection $\mathfrak{s}\subseteq \mathcal{P}$ will consist of the indices of all stable subsystem matrices and $\mathfrak{u}\subseteq \mathcal{P}$ will consist of the indices of all unstable subsystem matrices. The set $M_n(\mathbb{R})$ denotes the set of $n\times n$ real matrices, and $\bf{O}$ denotes the zero matrix of size $n$. For a matrix $Q$ having all real eigenvalues, $\lambda_{\max}(Q)$ and $\lambda_{\min}(Q)$ denote the largest and the smallest eigenvalue of $Q$. The symbol $\|v\|$ denotes the Euclidean norm if $v\in\mathbb{R}^n$, and the spectral norm if $v\in M_n(\mathbb{R})$. If $J$ is a \textit{Jordan form} of a matrix $A$, then any matrix $P$ satisfying $A=PJP^{-1}$ (Jordan decomposition of $A$) will be referred to as a \textit{Jordan basis matrix} of $A$. We consider complex Jordan decomposition of matrices.
			
			\section{Stability results using the expression of the flow}\label{sec:flow}
			
			In this section, we consider the expression of the flow of a reset or an impulsive switched system and discuss the stability issues raised in \Cref{sec:NotationsAndIssues}. We derive inspiration from the techniques used in~\cite{agarwal2019stabilizing,karabacak2009dwell} to obtain the results. The bounds given in the following remark will be used throughout the paper.
			
			\begin{remark}\label{rem:linearsystemdecay}
				For each $p\in \mathcal{P}$, let $J_p$ be the complex Jordan form of the subsystem matrix $A_p$, which is unique up to permutation of Jordan blocks. Then
				\begin{enumerate}
					\item if $A_p$ is unstable, then there exist $c_p,\mu_p^*>0$ satisfying $\|{\rm{e}}^{J_p t}\|\le c_p\,{\rm{e}}^{\mu_p^* t}$, for all $t>0$. Clearly $\mu_p^*\ge\mu_p$, where $\mu_p>0$ is the real part of the eigenvalue of $A_p$ farthest to the imaginary axis on its right. 
					\item If $A_p$ is stable, then there exist $c_p,\lambda_p^*>0$ satisfying $\|{\rm{e}}^{J_p t}\|\le c_p\,{\rm{e}}^{-\lambda_p^* t}$, for all $t>0$. Clearly $-\lambda_p^*\ge-\lambda_p$, where $-\lambda_p<0$ is the real part of the eigenvalue of $A_p$ closest to the imaginary axis on its left. 
				\end{enumerate}
			\end{remark} 
			
			For each $p\in\mathcal{P}$, fix a complex Jordan decomposition $P_p J_pP_p^{-1}$ of $A_p$. The flow of the reset switched system~\eqref{eq:RSS} is given by \begin{eqnarray*}
				x(t)&=&{\rm{e}}^{A_{\sigma(t_j)}(t-t_j)}\left(\prod_{k=1}^{j}R_{(\sigma(t_{k-1}),\sigma(t_{k}))}\,{\rm{e}}^{A_{\sigma(t_{k-1})}(t_{k}-t_{k-1})}\right)x(0),
			\end{eqnarray*}
			for $t\in[t_j,t_{j+1})$, $j\ge 1$, which can be re-written as
			\begin{multline}\label{eq:flow:RSS-new}
				x(t)=P_{\sigma(t_j)}{\rm{e}}^{J_{\sigma(t_j)}(t-t_j)}\left(\prod_{k=1}^{j}P_{\sigma(t_{k})}^{-1}R_{(\sigma(t_{k-1}),\sigma(t_{k}))}P_{\sigma(t_{k-1})}{\rm{e}}^{J_{\sigma(t_{k-1})}(t_{k}-t_{k-1})}\right)\\P_{\sigma(t_{0})}^{-1}x(0).
			\end{multline}
			Similarly, the flow of the impulsive switched system~\eqref{eq:ISS} is given by \begin{eqnarray*}
				x(t)&=&{\rm{e}}^{A_{\sigma(t_j)}(t-t_j)}\left(\prod_{k=1}^{j}I_{\omega(t_k)}\,{\rm{e}}^{A_{\sigma(t_{k-1})}(t_{k}-t_{k-1})}\right)x(0),
			\end{eqnarray*}
			for $t\in[t_j,t_{j+1})$, $j\ge 1$, which can be re-written as
			\begin{eqnarray*}
				x(t)&=&P_{\sigma(t_j)}{\rm{e}}^{J_{\sigma(t_j)}(t-t_j)}\left(\prod_{k=1}^{j}P_{\sigma(t_{k})}^{-1}I_{\omega(t_k)}P_{\sigma(t_{k-1})}{\rm{e}}^{J_{\sigma(t_{k-1})}(t_{k}-t_{k-1})}\right)P_{\sigma(t_{0})}^{-1}x(0).
			\end{eqnarray*}
			
			It is clear that if the reset (impulse) at a given switching instant is the zero matrix $\bf{O}$, then the trajectory jumps to the origin and stays thereafter. Hence to obtain non-trivial results concerning stability, we assume that all the resets (impulses) are nonzero matrices. We will refer to these as \textit{nonzero resets} (\textit{nonzero impulses}).
			
			In the following set of results, we prove that for any signal satisfying a given dwell time constraint in stable subsystems and a given flee time constraint in unstable subsystems, it is possible to obtain resets (impulses) so as to ensure stability of the associated reset (impulsive) switched system. 
			
			\begin{theorem}\label{thm:ResetsGivenConstraints}
				Given a switched system~\eqref{eq:system} with $\sigma\in \mathcal{S}[\tau,\eta]$, there exists a collection $\mathcal{R}$ of nonzero resets such that the corresponding reset switched system~\eqref{eq:RSS} is stable.\end{theorem}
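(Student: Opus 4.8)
The plan is to exploit the explicit flow formula~\eqref{eq:flow:RSS-new} and to choose the reset matrices so that they cancel the Jordan basis matrices appearing in each factor of the product, while simultaneously scaling every factor to have norm below $1$. Concretely, for each pair $p,q\in\mathcal{P}$ I would set
\[
R_{(p,q)} = c_{(p,q)}\,P_q P_p^{-1},
\]
where $c_{(p,q)}>0$ is a scalar fixed below. Since $P_q P_p^{-1}$ is invertible, each $R_{(p,q)}$ is a nonzero matrix depending only on the pair $(p,q)$, as required by the definition of a reset switched system. With this choice, writing $p=\sigma(t_{k-1})$ and $q=\sigma(t_k)$, the generic factor in~\eqref{eq:flow:RSS-new} collapses to $c_{(p,q)}\,\mathrm{e}^{J_p(t_k-t_{k-1})}$, because $P_q^{-1}R_{(p,q)}P_p = c_{(p,q)}\,I$.

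Next I would bound the surviving exponential uniformly using~\Cref{rem:linearsystemdecay} together with the time constraints defining $\mathcal{S}[\tau,\eta]$. If $p\in\mathfrak{s}$, then $\|\mathrm{e}^{J_p s}\|\le c_p\,\mathrm{e}^{-\lambda_p^* s}\le c_p$ for every $s\ge 0$, so I set $K_p:=c_p$; if $p\in\mathfrak{u}$, then the flee-time bound $t_k-t_{k-1}\le\eta$ gives $\|\mathrm{e}^{J_p s}\|\le c_p\,\mathrm{e}^{\mu_p^* s}\le c_p\,\mathrm{e}^{\mu_p^*\eta}$, so I set $K_p:=c_p\,\mathrm{e}^{\mu_p^*\eta}$. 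Fixing any $\rho\in(0,1)$ and choosing $c_{(p,q)}:=\rho/K_p$ then forces each interior factor to have spectral norm at most $\rho$. Note that only the flee time is essential here; the dwell time merely sharpens $K_p$ for stable modes to $c_p\,\mathrm{e}^{-\lambda_p^*\tau}$.

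It then remains to estimate the full flow. For $t\in[t_j,t_{j+1})$ the product of the $j$ interior factors contributes at most $\rho^j$, while the boundary terms are each uniformly bounded: $\|P_{\sigma(t_j)}\|$ and $\|P_{\sigma(t_0)}^{-1}\|$ by $\max_{p}\|P_p\|$ and $\max_{p}\|P_p^{-1}\|$ respectively, and the leading exponential $\|\mathrm{e}^{J_{\sigma(t_j)}(t-t_j)}\|$ by $\max_p K_p$, because $t-t_j\ge 0$ when $\sigma(t_j)\in\mathfrak{s}$ and $t-t_j<t_{j+1}-t_j\le\eta$ when $\sigma(t_j)\in\mathfrak{u}$. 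Combining these gives a constant $C>0$, independent of $t$ and of $x(0)$, with $\|x(t)\|\le C\rho^j\|x(0)\|\le C\|x(0)\|$. Taking $\delta=\epsilon/C$ yields stability, and since $t_k\uparrow\infty$ forces $j\to\infty$ as $t\to\infty$, the factor $\rho^j\to 0$ in fact delivers asymptotic stability.

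The only genuine obstacle is the non-telescoping leading term $\mathrm{e}^{J_{\sigma(t_j)}(t-t_j)}$ corresponding to the currently active subsystem: as there is no upper bound on the time spent in a stable mode, this term cannot be controlled by a flee-time argument and must instead be handled by the monotone decay $\|\mathrm{e}^{J_p s}\|\le c_p$ available for stable $p$. Everything else is routine bookkeeping. I would also point out that the same construction yields asymptotic stability uniformly over all $\sigma\in\mathcal{S}[\tau,\eta]$, which is stronger than the stated conclusion.
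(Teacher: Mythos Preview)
Your argument is correct and follows the same route as the paper: bound each factor $P_{\sigma(t_k)}^{-1}R_{(\sigma(t_{k-1}),\sigma(t_k))}P_{\sigma(t_{k-1})}\mathrm{e}^{J_{\sigma(t_{k-1})}(t_k-t_{k-1})}$ in the flow formula~\eqref{eq:flow:RSS-new} uniformly below~$1$ via \Cref{rem:linearsystemdecay} and the flee-time constraint. The only difference is packaging: the paper identifies the full convex regions $S_{p,q}^\tau$ and $U_{p,q}^\eta$ of admissible resets, whereas you exhibit the single explicit member $R_{(p,q)}=c_{(p,q)}P_qP_p^{-1}$ of those regions (the same construction the paper reserves for \Cref{cor:StabilizingResets}); your strict contraction factor $\rho<1$ and explicit treatment of the boundary terms moreover deliver asymptotic stability, which is stronger than what the paper states here, and you correctly observe that the dwell time $\tau$ is not actually needed for mere existence of stabilizing resets.
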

			\begin{proof}
				Suppose $\mathcal{R}=\{R_{p,q}\ : \ p,q\in \mathcal{P}\}$ be a collection of nonzero resets. We will find conditions on this collection which ensure stability of the corresponding reset switched system. For each $p,q\in\mathcal{P}$, let $A_p=P_pJ_pP_p^{-1}$ be a complex Jordan decomposition of $A_p$ and let $K_{p,q}(t)=\|P_q^{-1}R_{(p,q)}P_p{\rm{e}}^{J_p t}\|$. The term $K_{p,q}(t_k-t_{k-1})$ is precisely the one that appears in the product in the parentheses in~\eqref{eq:flow:RSS-new} with $\sigma(t_{k-1})=p$ and $\sigma(t_k)=q$. Consider a signal $\sigma\in \mathcal{S}[\tau,\eta]$. Using the expression~\eqref{eq:flow:RSS-new} of the flow, we see that the reset switched system is stable if for each $p,q\in\mathcal{P}$, we have $K_{p,q}(t)\le 1$, for all $0<t\le \eta$ when $p\in\mathfrak{u}$, and $t\ge \tau$ when $p\in\mathfrak{s}$. Using \Cref{rem:linearsystemdecay},  
				\[
				K_{p,q}(t) \le \|P_q^{-1}R_{(p,q)}P_p\| \|{\rm{e}}^{J_p t}\| \le \begin{cases}
					\|P_q^{-1}R_{(p,q)}P_p\| c_p\,{\rm{e}}^{\mu_p^* t}, & p\in\mathfrak{u},\\	\|P_q^{-1}R_{(p,q)}P_p\| c_p\,{\rm{e}}^{-\lambda_p^* t}, & p\in\mathfrak{s}.
				\end{cases}
				\]
				Hence, $K_{p,q}(t)\le 1$ if $\|P_q^{-1}R_{(p,q)}P_p\|\le c_p^{-1}{\rm{e}}^{-\mu_p^*\eta}$ when $p\in\mathfrak{u}$, and if $\|P_q^{-1}R_{(p,q)}P_p\|\le c_p^{-1}{\rm{e}}^{\lambda_p^*\tau}$ when $p\in\mathfrak{s}$. Thus consider the convex set of matrices given by 
				\begin{eqnarray}\label{eq:US}
					U_{p,q}^\eta&=&\{M\in M_n(\mathbb{R})\colon\ \|P_q^{-1}MP_p\|\le c_p^{-1}{\rm{e}}^{-\mu_p^*\eta}\}\setminus\{\bf{O}\},\nonumber\\
					S_{p,q}^\tau&=&\{M\in M_n(\mathbb{R})\colon\ \|P_q^{-1}MP_p\|\le c_p^{-1}{\rm{e}}^{\lambda_p^*\tau}\}\setminus\{\bf{O}\}.
				\end{eqnarray} 
				Then the reset switched system is stable for the collection $\mathcal{R}$ of resets, where $R_{(p,q)}\in U_{p,q}^\eta$ if $p\in\mathfrak{u}$ and $R_{(p,q)}\in S_{p,q}^\tau$ if $p\in\mathfrak{s}$. 
			\end{proof}
			
			\begin{remark}
				In the literature, the reset matrices are assumed to be nonsingular.  The above procedure also allows us to choose nonsingular matrices as the choice of stabilizing resets since nonsingular matrices are dense in the set of all matrices, and hence intersect with the sets $U_{p,q}^\eta$ and $S_{p,q}^\tau$. The convex sets $U_{p,q}^\eta$ and $S_{p,q}^\tau$ also contain nonzero singular matrices as well. The existence of such matrices can be established by showing the existence of pairs of linearly independent matrices $(A,B)$ satisfying $\det(AB)<0$.
			\end{remark}
			
			The preceding theorem yields an alternate proof for the fact that a switched system with all subsystems stable can be stabilized under arbitrary switching by choosing suitable reset matrices. This result is proved in~\cite[Theorem 2]{bras2012stability} using multiple Lyapunov functions approach.
			
			\begin{corollary}\label{cor:StabilizingResets}
				For a switched system~\eqref{eq:system} having all stable subsystems, there exists a collection $\mathcal{R}$ of nonzero resets so that the corresponding reset switched system~\eqref{eq:RSS} is stable under arbitrary switching.
			\end{corollary}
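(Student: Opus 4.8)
The plan is to specialize the construction in the proof of \Cref{thm:ResetsGivenConstraints} to the situation where every subsystem matrix is stable, and to observe that in this case no dwell-time hypothesis is actually needed. Concretely, ``stability under arbitrary switching'' is the degenerate case $\tau=0$ of \Cref{thm:ResetsGivenConstraints}: since all subsystems are stable we have $\mathfrak{u}=\varnothing$, and the exponential decay estimate forces the relevant matrix norms to stay below $1$ for \emph{all} positive times, not merely for times exceeding some dwell time.

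First I would record, from part (2) of \Cref{rem:linearsystemdecay}, that for each $p\in\mathcal{P}$ there are constants $c_p,\lambda_p^*>0$ with $\|\mathrm{e}^{J_p t}\|\le c_p\,\mathrm{e}^{-\lambda_p^* t}\le c_p$ for every $t\ge 0$, the last inequality holding because $\lambda_p^*>0$. Keeping the notation $K_{p,q}(t)=\|P_q^{-1}R_{(p,q)}P_p\,\mathrm{e}^{J_p t}\|$ from the theorem, this gives $K_{p,q}(t)\le \|P_q^{-1}R_{(p,q)}P_p\|\,c_p$ uniformly in $t>0$. Hence I would choose each reset so that $\|P_q^{-1}R_{(p,q)}P_p\|\le c_p^{-1}$, i.e. pick $R_{(p,q)}\in S_{p,q}^{\tau}$ with $\tau=0$; this ball of positive radius about $\mathbf{O}$ is nonempty and contains nonzero matrices (for instance $R_{(p,q)}=\varepsilon P_q P_p^{-1}$ for small $\varepsilon>0$, since then $P_q^{-1}R_{(p,q)}P_p=\varepsilon I$). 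With this choice $K_{p,q}(t)\le 1$ for every $t>0$ and every pair $p,q$, with no constraint whatsoever on the switching times.

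It then remains to feed these bounds into the flow expression~\eqref{eq:flow:RSS-new}. For any switching signal and any $t\in[t_j,t_{j+1})$, each factor $P_{\sigma(t_k)}^{-1}R_{(\sigma(t_{k-1}),\sigma(t_k))}P_{\sigma(t_{k-1})}\mathrm{e}^{J_{\sigma(t_{k-1})}(t_k-t_{k-1})}$ in the product has norm equal to $K_{\sigma(t_{k-1}),\sigma(t_k)}(t_k-t_{k-1})\le 1$, so by submultiplicativity of the spectral norm the whole product is bounded by $1$, while the outer factors satisfy $\|P_{\sigma(t_j)}\mathrm{e}^{J_{\sigma(t_j)}(t-t_j)}\|\le c_{\sigma(t_j)}\|P_{\sigma(t_j)}\|$ and the trailing factor is $\|P_{\sigma(t_0)}^{-1}\|$. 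Setting $C=\big(\max_{p}\|P_p\|\big)\big(\max_{p}c_p\big)\big(\max_{p}\|P_p^{-1}\|\big)$, which is finite because $\mathcal{P}$ is finite, I obtain $\|x(t)\|\le C\|x(0)\|$ for all $t>0$ and all signals $\sigma$, and stability follows by taking $\delta=\epsilon/C$.

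The step I expect to carry the real content is the observation in the second paragraph rather than any computation: in \Cref{thm:ResetsGivenConstraints} the dwell time entered only to produce the slack factor $\mathrm{e}^{\lambda_p^*\tau}>1$ enlarging the admissible reset set, and one must check that discarding this slack (taking $\tau=0$) still leaves a nonempty set of nonzero resets while preserving the uniform bound $K_{p,q}\le 1$. Because we are asked only for (Lyapunov) stability and not asymptotic stability, the non-strict estimate $\mathrm{e}^{-\lambda_p^* t}\le 1$ is enough and the argument goes through for arbitrary switching.
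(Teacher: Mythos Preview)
Your proposal is correct and takes essentially the same approach as the paper: both choose resets of the form $R_{(p,q)}=\alpha\,P_qP_p^{-1}$ with scalar $\alpha$ small enough that $\|P_q^{-1}R_{(p,q)}P_p\,\mathrm{e}^{J_pt}\|\le 1$ for all $t\ge 0$. The paper writes this more tersely, taking $\alpha=d_p^{-1}$ with $d_p>c_p$, whereas you frame it as the degenerate case $\tau=0$ of the preceding theorem and spell out the flow bound explicitly; the content is the same.
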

			
			\begin{proof}
				Let $d_p>c_p$ and $R_{(p,q)}=d_p^{-1}P_qP_p^{-1}$, for each $p,q\in\mathcal{P}$. Then for all $t\ge 0$, $\|P_q^{-1}R_{(p,q)}P_p{\rm{e}}^{J_p t}\|=d_p^{-1}\Vert e^{J_pt}\Vert \le d_p^{-1}c_pe^{-\lambda_p^*t}<1$. Hence the result follows.
			\end{proof}

			\begin{theorem}
				For a given switched system~\eqref{eq:system} with $\sigma\in \mathcal{S}[\tau,\eta]$, there exists a collection $\mathcal{I}$ of non-zero impulses such that the corresponding impulsive switched system~\eqref{eq:ISS} is stable for each $\omega$.
			\end{theorem}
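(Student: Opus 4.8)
The plan is to follow the template of \Cref{thm:ResetsGivenConstraints}, working from the flow expression of the impulsive switched system~\eqref{eq:ISS}. The essential structural difference to keep in mind is that a reset $R_{(p,q)}$ may be tailored to the ordered pair of subsystems active across a switching instant, whereas the impulse $I_{\omega(t_k)}$ is chosen by the arbitrary function $\omega$, independently of which subsystems are active. Since we demand stability for \emph{every} $\omega$, each individual impulse in $\mathcal{I}$ must be stabilizing no matter which pair $(p,q)=(\sigma(t_{k-1}),\sigma(t_k))$ it is multiplied against; equivalently, every matrix placed in $\mathcal{I}$ has to satisfy, simultaneously, the constraints arising from all pairs in $\mathcal{P}\times\mathcal{P}$.

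Concretely, for $p,q\in\mathcal{P}$ and $r\in\mathcal{Q}$ I would set $K_{p,q,r}(t)=\|P_q^{-1}I_rP_p\,{\rm{e}}^{J_pt}\|$, which is exactly the generic factor occurring in the product in the flow of~\eqref{eq:ISS} when $\sigma(t_{k-1})=p$, $\sigma(t_k)=q$, and $\omega(t_k)=r$. Reading off the flow expression as in the previous proof, the impulsive system is stable for every $\omega$ provided $K_{p,q,r}(t)\le 1$ for all $p,q\in\mathcal{P}$, all $r\in\mathcal{Q}$, and all admissible times $t$ (that is, $0<t\le\eta$ when $p\in\mathfrak{u}$ and $t\ge\tau$ when $p\in\mathfrak{s}$): each factor of the product is then bounded by $1$, while the factors from the initial conjugation and the final partial interval stay uniformly bounded over all signals in $\mathcal{S}[\tau,\eta]$, giving a uniform estimate $\|x(t)\|\le C\|x(0)\|$ from which stability follows. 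Applying \Cref{rem:linearsystemdecay} and submultiplicativity gives $K_{p,q,r}(t)\le\|P_q^{-1}I_rP_p\|\,c_p\,{\rm{e}}^{\mu_p^*t}$ for $p\in\mathfrak{u}$ and $K_{p,q,r}(t)\le\|P_q^{-1}I_rP_p\|\,c_p\,{\rm{e}}^{-\lambda_p^*t}$ for $p\in\mathfrak{s}$; the worst admissible $t$ is $t=\eta$ in the unstable case and $t=\tau$ in the stable case, so the requirement collapses to the single inequality $\|P_q^{-1}I_rP_p\|\le\beta_p$ for all $p,q$, where $\beta_p=c_p^{-1}{\rm{e}}^{-\mu_p^*\eta}$ if $p\in\mathfrak{u}$ and $\beta_p=c_p^{-1}{\rm{e}}^{\lambda_p^*\tau}$ if $p\in\mathfrak{s}$.

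The one point where the impulsive case genuinely differs from the reset case --- and which I expect to be the main, though ultimately mild, obstacle --- is that now a \emph{single} matrix must lie in the intersection over all pairs $(p,q)$ of the constraint sets, rather than in one set per pair. I would clear this hurdle with the elementary bound $\|P_q^{-1}MP_p\|\le\|P_q^{-1}\|\,\|M\|\,\|P_p\|$: any $M$ with $\|M\|\le\min_{p,q\in\mathcal{P}}\frac{\beta_p}{\|P_q^{-1}\|\,\|P_p\|}$ meets every constraint at once, and since $\mathcal{P}$ is finite this minimum is a strictly positive number. Hence the convex set
\[
V=\left\{M\in M_n(\mathbb{R})\colon\ \|P_q^{-1}MP_p\|\le\beta_p\ \text{ for all }p,q\in\mathcal{P}\right\}\setminus\{\mathbf{O}\}
\]
is nonempty (it contains a punctured ball about the origin), being a finite intersection of convex norm-balls with the origin removed. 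Choosing $\mathcal{I}=\{I_r\colon r\in\mathcal{Q}\}$ to be any collection of nonzero matrices drawn from $V$ then forces $K_{p,q,r}(t)\le 1$ for all $p,q,r$ and all admissible $t$, and therefore the impulsive switched system~\eqref{eq:ISS} is stable for every $\omega$, as required.
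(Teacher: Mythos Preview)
Your proposal is correct and follows essentially the same approach as the paper: the paper defines $\mathcal{I}=\bigl(\bigcap_{p\in\mathfrak{s},\,q\in\mathcal{P}}S_{p,q}^{\tau}\bigr)\cap\bigl(\bigcap_{p\in\mathfrak{u},\,q\in\mathcal{P}}U_{p,q}^{\eta}\bigr)$, which is precisely your set $V$ rewritten in terms of the sets~\eqref{eq:US}. The only difference is cosmetic---the paper leans on the notation already introduced and is terser about non-emptiness, whereas you spell out the submultiplicativity argument $\|P_q^{-1}MP_p\|\le\|P_q^{-1}\|\,\|M\|\,\|P_p\|$ to exhibit a punctured ball inside the intersection.
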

			
			\begin{proof}
				Define $\mathcal{I} = \left(\bigcap_{p\in\mathfrak{s}, q\in\mathcal{P}}S^\tau_{p,q}\right)\cap\left(\bigcap_{p\in\mathfrak{u}, q\in\mathcal{P}} U^\eta_{p,q}\right)$. Then $\|P_q^{-1}MP_p\|\le C(\tau,\eta)$, for all $M\in\mathcal{I}$, $p,q\in\mathcal{P}$, where\\ $C(\tau,\eta)=\min\left\{\min_{p\in \mathfrak{u}} \,\, c_p^{-1}{\rm{e}}^{-\mu_p^*\eta},\,\min_{p\in \mathfrak{s}}\,\, c_p^{-1}{\rm{e}}^{\lambda_p^*\tau}\right\}$. Note that, for fixed dwell time $\tau$, as flee time $\eta$ decreases, $C(\tau,\eta)$ is non-decreasing. Hence the collection $\mathcal{I}$ expands. Further, for fixed flee time $\eta$, as dwell time $\tau$ increases, $C(\tau,\eta)$ non-decreasing. Hence the collection $\mathcal{I}$ expands. 
			\end{proof}
			
			Now we focus on the converse problem of computing dwell time constraint $\tau$ and flee time constraint $\eta$ for which the given reset switched system~\eqref{eq:RSS} is stable for all signals $\sigma\in\mathcal{S}[\tau,\eta]$. Note that if the dwell time $\tau$ is sufficiently large, then $\|P_q^{-1}R_{(p,q)}P_p{\rm{e}}^{J_p t}\|<1$, for all $p\in\mathfrak{s}$. However, choosing a suitable flee time $\eta$ is not so straightforward. To this end, we will work with switched systems where switching from an unstable susbsystem is restricted. We now describe the notion of graph dependent switched systems where the underlying graph has subsystems as vertices and the edges between vertices determine the allowed switchings between associated subsystems. Stability issues for such systems (without resets/impulses) are well-studied in the literature, see~\cite{agarwal2018simple,agarwal2019stabilizing,karabacak2013dwell}. 
			
			\subsection*{Graphs}
			
			A \textit{directed graph} is an ordered pair $\mathcal{G}=(\mathcal{V}(\mathcal{G}),\mathcal{E}(\mathcal{G}))$, where $\mathcal{V}(\mathcal{G})$ is the set of \textit{vertices} and $\mathcal{E}(\mathcal{G})$ is a set of ordered pair of vertices, called \textit{directed edges}. If $(u,v)\in\mathcal{E}(\mathcal{G})$ for some $u,v\in\mathcal{V}(\mathcal{G})$, we say that there is a directed edge from vertex $u$ to vertex $v$. A \textit{loop} is a directed edge that connects a vertex to itself. A graph is said to be \textit{simple} if there are no loops or multiple directed edges from one vertex to another. A \textit{cycle} is a sequence of directed edges $(u_1,u_2)$, $(u_2,u_3)$, $\dots$, $(u_k,u_{k+1})$, with $u_{k+1}=u_1$. A graph without any cycles is called an \textit{acyclic graph}. A \textit{topological ordering} of a directed graph is a total ordering of its vertices such that for every edge, the starting vertex of the edge occurs before the ending vertex of the edge. It is known that a directed graph has a topological ordering if and only if it is acyclic.
			
			\subsection*{Switching governed by an underlying graph}
			
			Recall that $\{t_k\}_{k\in\mathbb{N}}$ denotes the increasing sequence of switching times of a switching signal $\sigma$ with $t_0=0$. We now define switching governed by an underlying graph as follows.
			
			\begin{definition}[Switching governed by an underlying graph]
				Let $\{A_p\colon p\in\mathcal{P}\}$ be a finite set of subsystem matrices, each of size $n$. Let $\mathcal{G}$ be a directed graph with vertices $\mathcal{V}(\mathcal{G})=\{A_p\colon p\in\mathcal{P}\}$ and a set of directed edges, $\mathcal{E}(\mathcal{G})$. Assume that for each $p\in\mathcal{P}$, there exists a $q\in\mathcal{P}$ such that $(p,q)\in \mathcal{E}(\mathcal{G})$. Any signal satisfying $(A_{\sigma(t_{k-1})},A_{\sigma(t_{k})})\in\mathcal{E}(\mathcal{G})$, for all $k\in\mathbb{N}$ is called a \textit{$\mathcal{G}$-admissible} signal. We will denote the set of all $\mathcal{G}$-admissible signals by $\mathcal{S}^{\mathcal{G}}$. The switched system $\dot{x}=A_\sigma x$ is said to have \textit{switching governed by the graph $\mathcal{G}$} if the signal $\sigma$ is $\mathcal{G}$-admissible.
			\end{definition}
			
			For $\tau> 0$ and $\eta>0$, we define the class of $\mathcal{G}$-admissible signals with dwell time $\tau$ in stable and flee time $\eta$ in unstable subsystems as follows: $\mathcal{S}^{\mathcal{G}}[\tau,\eta]=\mathcal{S}^{\mathcal{G}}\cap\mathcal{S}[\tau,\eta]$. Hence, when all subsystems are stable, we define $\mathcal{S}^{\mathcal{G}}_s[\tau]=\mathcal{S}^{\mathcal{G}}\cap\mathcal{S}_s[\tau]$. Similarly, when all subsystems are unstable, we define $\mathcal{S}^{\mathcal{G}}_u[\eta]=\mathcal{S}^{\mathcal{G}}\cap\mathcal{S}_u[\eta]$.

			%
				\noindent Consider the subgraphs $\mathcal{G}_s$ and $\mathcal{G}_u$ of $\mathcal{G}$ for which $\mathcal{V}(\mathcal{G}_s)=\mathcal{V}(\mathcal{G})=\mathcal{V}(\mathcal{G}_u)$ and \[
				\mathcal{E}(\mathcal{G}_s)=\{(A_p,A_q)\in\mathcal{E}(\mathcal{G})\colon\ p\in\mathfrak{s}\},\ \  \mathcal{E}(\mathcal{G}_u)=\{(A_p,A_q)\in\mathcal{E}(\mathcal{G})\colon\ p\in\mathfrak{u}\}.
				\] 
				That is, $\mathcal{E}(\mathcal{G}_s)$ consists of those edges of the graph $\mathcal{G}$ which originate from a stable subsystem and $\mathcal{E}(\mathcal{G}_u)$ consists of those edges of the graph $\mathcal{G}$ which originate from an unstable subsystem.
			
			\noindent Henceforth we will use the shorthand $(p,q)$ for the directed edge $(A_p,A_q)\in\mathcal{E}(\mathcal{G})$.
			
			\begin{lemma}[Adapted from~\cite{agarwal2018simple}]\label{lemma:acyclic_cond}
				(With the notations as above) For each $(p,q)\in\mathcal{E}(\mathcal{G})$, let $\mathbb{M}$ be a compact collection of matrices from $M_n(\mathbb{R})$ and let $\{c_p\colon\ p\in\mathcal{P}\}$ be a set of positive constants. If the subgraph $\mathcal{G}_u$ of $\mathcal{G}$ is acyclic, then for given $\epsilon\in (0,1)$, for each $p\in\mathcal{P}$, there is a Jordan basis matrix $P_p$ (depending on $\epsilon$) of $A_p$ so that \begin{eqnarray}\label{eq:acyclic_cond}
					\max\left\{c_p\left\|P_q^{-1}MP_p\right\|\colon (p,q)\in\mathcal{E}(\mathcal{G}_u), M\in\mathbb{M}\right\}<\epsilon.
				\end{eqnarray}
			\end{lemma}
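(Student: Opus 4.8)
The plan is to exploit the freedom in the choice of a Jordan basis matrix. If $P_p^{(0)}$ is any Jordan basis matrix of $A_p$, then $s_p P_p^{(0)}$ is again a Jordan basis matrix for every scalar $s_p>0$, since a scalar commutes with $J_p$ and hence $(s_pP_p^{(0)})J_p(s_pP_p^{(0)})^{-1}=P_p^{(0)}J_p(P_p^{(0)})^{-1}=A_p$. Under such rescalings the quantity governing an edge $(p,q)$ transforms multiplicatively: writing $P_p=s_pP_p^{(0)}$ and $P_q=s_qP_q^{(0)}$ gives $\|P_q^{-1}MP_p\|=(s_p/s_q)\,\|(P_q^{(0)})^{-1}MP_p^{(0)}\|$. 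Thus each edge contributes a scaling factor $s_p/s_q$, and the whole problem reduces to selecting positive scalars $\{s_p\colon p\in\mathcal{P}\}$ that make $s_p/s_q$ uniformly small over all edges $(p,q)\in\mathcal{E}(\mathcal{G}_u)$ simultaneously.

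First I would fix, once and for all, arbitrary Jordan basis matrices $P_p^{(0)}$ for each $p\in\mathcal{P}$, and set $B=\max\{c_p\|(P_q^{(0)})^{-1}MP_p^{(0)}\|\colon (p,q)\in\mathcal{E}(\mathcal{G}_u),\ M\in\mathbb{M}\}$. This maximum is finite and attained: $\mathcal{E}(\mathcal{G}_u)$ is a finite edge set, and for each fixed edge the map $M\mapsto c_p\|(P_q^{(0)})^{-1}MP_p^{(0)}\|$ is continuous on the compact set $\mathbb{M}$.

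Next I would invoke acyclicity. Since $\mathcal{G}_u$ is acyclic, it admits a topological ordering of its vertices; relabel so that $\mathcal{P}=\{p_1,\dots,p_N\}$ and every edge $(p_i,p_j)\in\mathcal{E}(\mathcal{G}_u)$ satisfies $i<j$. I would then fix $\rho>B/\epsilon$, define $s_{p_k}=\rho^{\,k}$, and set $P_{p_k}=s_{p_k}P_{p_k}^{(0)}$. For any edge $(p_i,p_j)\in\mathcal{E}(\mathcal{G}_u)$ one has $s_{p_i}/s_{p_j}=\rho^{\,i-j}\le\rho^{-1}$ because $j>i$, whence $c_{p_i}\|P_{p_j}^{-1}MP_{p_i}\|=(s_{p_i}/s_{p_j})\,c_{p_i}\|(P_{p_j}^{(0)})^{-1}MP_{p_i}^{(0)}\|\le\rho^{-1}B<\epsilon$ for every $M\in\mathbb{M}$, which is exactly \eqref{eq:acyclic_cond}.

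The crux of the argument, and the place where acyclicity is indispensable, is the simultaneous smallness requirement on the ratios $s_p/s_q$: an edge $(p,q)$ forces $s_p\ll s_q$, and along a directed cycle these demands would chain into the impossible chain $s_{p_1}<s_{p_2}<\dots<s_{p_1}$. The absence of cycles is precisely what guarantees a consistent (topological) ordering of the scalars, and geometric growth along that ordering collapses every edge ratio to at most $\rho^{-1}$. I expect the only routine points to be the verification that scalar rescaling preserves the Jordan-basis property (immediate) and the finiteness of $B$ via compactness of $\mathbb{M}$; the combinatorial heart is the topological ordering supplied by acyclicity.
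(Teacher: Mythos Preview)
Your argument is correct and mirrors the paper's own proof: both fix an initial choice of Jordan bases, use compactness of $\mathbb{M}$ to bound the resulting maximum, and then rescale the bases by geometric factors along a topological ordering of the acyclic graph $\mathcal{G}_u$ so that every edge ratio drops below $\epsilon$. One small caveat: the step $\rho^{\,i-j}\le\rho^{-1}$ for $i<j$ tacitly uses $\rho\ge 1$, so you should take $\rho>\max\{1,B/\epsilon\}$ (equivalently, as the paper does, first dispose of the trivial case $B<\epsilon$ by keeping the original bases unchanged).
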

			
			\begin{proof}
				Let $\epsilon\in (0,1)$ be given. For each $p\in\mathcal{P}$, let $Q_p$ be a Jordan basis matrix of $A_p$. Set $\rho=\max\left\{c_p\left\|{Q}_q^{-1}M{Q}_p\right\|\colon (p,q)\in\mathcal{E}(\mathcal{G}_u),M\in\mathbb{M}\right\}$. Since $\mathbb{M}$ is compact, $\rho$ is finite. If $\rho<\epsilon$, we can choose $P_p=Q_p$, for each $p$. Otherwise if $\rho\ge \epsilon$, we choose appropriate scaling factors to obtain Jordan basis matrix $P_p$ of $A_p$ such that~\eqref{eq:acyclic_cond} is satisfied. 
				
				Let $0<\xi<1$ and $\theta=\epsilon^{-1}\xi^{-1}\rho>1$. Since $\mathcal{G}_u$ is acyclic, it has a topological ordering, say $A_{p_1}, A_{p_2},\dots, A_{p_{\left|\mathcal{P}\right|}}$. Let the ordering be such that $A_{p_1},\dots,A_{p_k}$ are unstable subsystems and have no incoming edges as vertices in the graph $\mathcal{G}_u$, $A_{p_{k+1}},\dots,A_{p_\ell}$ are unstable subsystems, and $A_{p_{\ell+1}},\ldots,A_{p_{\left|\mathcal{P}\right|}}$ are stable subsystems and have no outgoing edges as vertices in the graph $\mathcal{G}_u$. Let $P_{p_j}=\theta^{j-k}Q_{p_j}$, for $j={k+1},\ldots,\ell$, $P_{p_j}=\theta^{j-\ell}Q_{p_j}$, for $j={\ell+1},\ldots,{\left|\mathcal{P}\right|}$, and $P_p=Q_p$ for all other $p\in\mathcal{P}$. Note that for $p\in\mathfrak{u}$, $c_p\|P_q^{-1}MP_p\|\le\theta^{-1}c_p\|Q_q^{-1}MQ_p\|\le\theta^{-1}\rho=\epsilon\xi<\epsilon$, and the result follows.
			\end{proof}
			
			In view of the preceding result, we assume the following hypothesis which will be crucial in obtaining flee time ensuring stability of the switched system:\begin{hypothesis}\label{hyp}
			The subgraph $\mathcal{G}_u$ of $\mathcal{G}$, consisting of edges originating from vertices corresponding to unstable susbsystems, is acyclic. 
			\end{hypothesis}

			Henceforth, throughout this section, we will assume that the reset (impulsive) switched system has switching governed by a graph $\mathcal{G}$ satisfying \Cref{hyp}. Recall the collections $U_{p,q}^\eta$ and $S_{p,q}^\tau$ of matrices as defined in~\eqref{eq:US}. Note that both of these collections are compact subsets of the set $M_n(\mathbb{R})$ of all $n\times n$ real matrices.
			
			The next two results address an extended version of issues~\ref{issue1} and~\ref{issue2} where the choice of resets (impulses) is only allowed to be made from a given compact collection of matrices. Since there is a restriction on available resets (impulses), it is natural to expect certain restrictions on the time constraints so that we are able to stabilize the switched system using resets (impulses) from the given set of compact matrices. We prove that specific time constraints on the signal allow us to choose the set of resets (the set of impulses) from the given collection of matrices, so that the reset (impulsive) switched system is stable. 
			
			\begin{theorem}\label{thm:ConstraintsGivenResets} 
				Consider a switched system~\eqref{eq:system} governed by the graph $\mathcal{G}$ satisfying \Cref{hyp}, and a compact collection $\mathbb{M}$ of real matrices of size $n$ not containing the zero matrix $\mathbf{O}$. Then there exist $\tau^*\ge 0$, $\eta^*> 0$, and a collection $\mathcal{R}\subseteq \mathbb{M}$ of nonzero resets such that the reset switched system~\eqref{eq:RSS} is stable for all switching signals $\sigma\in\mathcal{S}^{\mathcal{G}}[\tau^*,\eta^*]$.
			\end{theorem}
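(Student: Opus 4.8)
The plan is to start from the closed-form flow~\eqref{eq:flow:RSS-new} and show that, with an appropriate choice of Jordan basis matrices $P_p$, resets $R_{(p,q)}\in\mathbb{M}$, and time constraints $\tau^*,\eta^*$, every factor $K_{p,q}(t_k-t_{k-1})=\|P_q^{-1}R_{(p,q)}P_p{\rm{e}}^{J_p(t_k-t_{k-1})}\|$ appearing in the product can be forced to be at most $1$. If this holds, the product over $k=1,\dots,j$ is bounded by $1$, and the leading factor $\|P_{\sigma(t_j)}\|\,\|{\rm{e}}^{J_{\sigma(t_j)}(t-t_j)}\|$ at the current time is uniformly bounded, using $\|{\rm{e}}^{J_p(t-t_j)}\|\le c_p$ for stable $p$ and $\|{\rm{e}}^{J_p(t-t_j)}\|\le c_p{\rm{e}}^{\mu_p^*\eta^*}$ for unstable $p$ (since then $t-t_j<\eta^*$). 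Together with the fixed trailing factor $\|P_{\sigma(t_0)}^{-1}\|$, this yields $\|x(t)\|\le C\|x(0)\|$ for a constant $C$ independent of $\sigma\in\mathcal{S}^{\mathcal{G}}[\tau^*,\eta^*]$, which is exactly stability. So the whole proof reduces to bounding each $K_{p,q}$, and I split this into edges originating from unstable subsystems and those originating from stable ones.

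The hard part is the unstable edges $(p,q)\in\mathcal{E}(\mathcal{G}_u)$, because shrinking the flee time alone does not help: as $\eta^*\to 0$ the bound $\|{\rm{e}}^{J_p t}\|\le c_p{\rm{e}}^{\mu_p^*\eta^*}$ tends to $c_p$, not to $0$, and $\|P_q^{-1}R_{(p,q)}P_p\|$ need not be small for resets taken from $\mathbb{M}$. This is precisely where \Cref{hyp} (acyclicity of $\mathcal{G}_u$) enters through \Cref{lemma:acyclic_cond}. I would fix $\epsilon\in(0,1)$ and apply the lemma with the given compact collection $\mathbb{M}$ and the constants $\{c_p\}$ from \Cref{rem:linearsystemdecay} to obtain Jordan basis matrices $P_p$ such that $c_p\|P_q^{-1}MP_p\|<\epsilon$ for every unstable edge $(p,q)$ and every $M\in\mathbb{M}$. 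Choosing any $R_{(p,q)}\in\mathbb{M}$ on such edges then gives $K_{p,q}(t_k-t_{k-1})\le c_p\|P_q^{-1}R_{(p,q)}P_p\|\,{\rm{e}}^{\mu_p^*(t_k-t_{k-1})}<\epsilon\,{\rm{e}}^{\mu_p^*\eta^*}$, and since $\mathfrak{u}$ is finite I can pick $\eta^*>0$ small enough that $\epsilon\,{\rm{e}}^{\mu_p^*\eta^*}<1$ for all $p\in\mathfrak{u}$ (e.g. $\eta^*<\min_{p\in\mathfrak{u}}\ln(1/\epsilon)/\mu_p^*$).

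With the $P_p$ now fixed by the lemma, the stable edges are routine. By compactness of $\mathbb{M}$ the quantity $B=\max\{\|P_q^{-1}MP_p\|\colon p\in\mathfrak{s},\,q\in\mathcal{P},\,M\in\mathbb{M}\}$ is finite, so for any $R_{(p,q)}\in\mathbb{M}$ and $t_k-t_{k-1}\ge\tau^*$ we get $K_{p,q}(t_k-t_{k-1})\le B\,c_p\,{\rm{e}}^{-\lambda_p^*\tau^*}$. Since $\mathfrak{s}$ is finite and each $\lambda_p^*>0$, choosing $\tau^*$ large enough makes $B\,c_p\,{\rm{e}}^{-\lambda_p^*\tau^*}<1$ for every $p\in\mathfrak{s}$; if $\mathfrak{s}=\varnothing$ I simply take $\tau^*=0$. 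It is essential that the lemma is applied first (fixing the $P_p$, hence $B$) and the dwell time is chosen afterwards, so that there is no circular dependence between the two bounds.

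Combining the two cases, every factor $K_{p,q}(t_k-t_{k-1})\le 1$, so the product in~\eqref{eq:flow:RSS-new} is bounded by $1$ along every admissible signal, and the uniform bound $\|x(t)\|\le C\|x(0)\|$ follows, proving stability for all $\sigma\in\mathcal{S}^{\mathcal{G}}[\tau^*,\eta^*]$. The single genuine obstacle is the unstable edges, resolved by the acyclicity-based rescaling of \Cref{lemma:acyclic_cond}; everything else is a matter of choosing $\eta^*$ small and $\tau^*$ large, in that order.
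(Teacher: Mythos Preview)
Your proof is correct and follows essentially the same route as the paper's: invoke \Cref{lemma:acyclic_cond} on the compact set $\mathbb{M}$ to rescale the Jordan bases so that every unstable-edge factor $K_{p,q}$ is below $1$ for some flee time $\eta^*>0$, and then, with the $P_p$ fixed, pick $\tau^*$ large enough to control the stable-edge factors. The only substantive addition in the paper is that it goes on to optimize over the reset choice on each edge, obtaining explicit formulas $\tau^*=\max_{(p,q)\in\mathcal{E}(\mathcal{G}_s)}\min_{M\in\mathbb{M}}\ln(c_p\|P_q^{-1}MP_p\|)/\lambda_p^*$ and $\eta^*=-\max_{(p,q)\in\mathcal{E}(\mathcal{G}_u)}\min_{M\in\mathbb{M}}\ln(c_p\|P_q^{-1}MP_p\|)/\mu_p^*$, whereas you take any $R_{(p,q)}\in\mathbb{M}$ and the corresponding worst-case bound; this yields sharper constants but is a refinement, not a different argument.
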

			
			\begin{proof}
				First of all, for each $\eta>0$, we prove that, for each $p\in\mathcal{P}$, there exist Jordan basis matrix $P_p$ (depending on $\eta$) of $A_p$, such that $\mathbb{M}\subseteq \cap_{(p,q)\in\mathcal{E}(\mathcal{G}_u)}U_{p,q}^{\eta}$. This is where the \Cref{hyp} will be used. Fix $\eta>0$ and let $\epsilon=\min_{p\in\mathfrak{u}}\{e^{-\mu_p^*\eta}\}\in (0,1)$. By \Cref{lemma:acyclic_cond}, for each $p\in\mathcal{P}$, there is a Jordan basis matrix $P_p$ (depending on $\epsilon$, and hence $\eta$) of $A_p$ so that $\max\left\{c_p\left\|P_q^{-1}MP_p\right\|\colon (p,q)\in \mathcal{E}(\mathcal{G}_u), M\in\mathbb{M}\right\}<\epsilon$. Hence for each $(p,q)\in \mathcal{E}(\mathcal{G}_u)$ and for each $M\in\mathbb{M}$, we have $\left\|P_q^{-1}MP_p\right\|<c_p^{-1}e^{-\mu_p^*\eta}$. Thus $\mathbb{M}\subseteq \cap_{(p,q)\in  \mathcal{E}(\mathcal{G}_u)}U_{p,q}^{\eta}$. 
				
				In particular, fix a number $\eta_{\min}>0$ and choose Jordan basis matrix $P_p$ (depending on $\eta_{\min}$) of $A_p$, $p\in\mathcal{P}$, such that $\mathbb{M}\subseteq \cap_{(p,q)\in  \mathcal{E}(\mathcal{G}_u)}U_{p,q}^{\eta_{\min}}$. Then define
				\begin{eqnarray*}
					\tau^*&=&\min\{\alpha\ge 0\ \colon\ S_{p,q}^{\alpha}\cap\mathbb{M}\ne\emptyset, \text{ for all } (p,q)\in \mathcal{E}(\mathcal{G}_s)\},\\
					\eta^*&=&\max\{\beta\ge 0\ \colon\ U_{p,q}^{\beta}\cap\mathbb{M}\ne\emptyset, \text{ for all } (p,q)\in \mathcal{E}(\mathcal{G}_u)\}.
				\end{eqnarray*}
				
				Clearly $\eta^*\ge \eta_{\min}>0$. Also, as $\beta$ increases, the collection $U_{p,q}^\beta$ shrinks for each $(p,q)\in \mathcal{E}(\mathcal{G}_u)$, and also $\cap_{k\in\mathbb{N}}U_{p,q}^{k}=\emptyset$. We prove that there exists $\beta\ge 0$ such that $\left(\cap_{(p,q)\in  \mathcal{E}(\mathcal{G}_u)}U_{p,q}^{\beta}\right)\cap\mathbb{M}=\emptyset$. Suppose it is not true and $\left(\cap_{(p,q)\in  \mathcal{E}(\mathcal{G}_u)}U_{p,q}^{\beta}\right)\cap\mathbb{M}\ne\emptyset$ for all $\beta\ge 0$. Then $\{\mathbb{M}\cap \left(\cap_{(p,q)\in \mathcal{E}(\mathcal{G}_s)} U_{p,q}^{k}\cup\{\bf{O}\}\right)\}_{k\in\mathbb{N}}$ is a shrinking sequence of non-empty compact sets, hence their intersection is non-empty by the Cantor's intersection theorem. This leads to a contradiction since $\bf{O}\notin \mathbb{M}$ and $\cap_{k\in\mathbb{N}}U_{p,q}^{k}=\emptyset$. Hence $\eta^*$ is finite.
				
				Furthermore as $\alpha$ increases, the collection $S_{p,q}^\alpha$ expands for each $(p,q)\in \mathcal{E}(\mathcal{G}_s)$, and also $\cup_{\alpha\ge 0}S_{p,q}^{\alpha}=M_n(\mathbb{R})\setminus \{\bf{O}\}$. Since $\mathbb{M}$ is compact and does not contain $\bf{O}$, there exists $\tau_{\max}\ge 0$ such that $\mathbb{M}\subseteq \cap_{(p,q)\in \mathcal{E}(\mathcal{G}_s)}S_{p,q}^{\tau_{\max}}$. Clearly $0\le \tau^*\le \tau_{\max}$. 
				
				By the arguments given thus far, the reset switched system~\eqref{eq:RSS} with signal $\sigma\in\mathcal{S}^{\mathcal{G}}[\tau^*,\eta^*]$ is stable if $R_{(p,q)}\in S_{p,q}^{\tau^*}\cap\mathbb{M}$ when $(p,q)\in \mathcal{E}(\mathcal{G}_s)$, and $R_{(p,q)}\in U_{p,q}^{\eta^*}\cap\mathbb{M}$ when $(p,q)\in \mathcal{E}(\mathcal{G}_u)$.

				The following statements are equivalent leading to the expressions which can be used to compute $\tau^*$. 
				\begin{itemize}
					\item $S_{p,q}^{\alpha}\cap\mathbb{M}\ne\emptyset$ for each $(p,q)\in \mathcal{E}(\mathcal{G}_s)$.
					\item There exists $M_{(p,q)}\in\mathbb{M}$ such that $M_{(p,q)}\in S_{p,q}^{\alpha}$ for each $(p,q)\in \mathcal{E}(\mathcal{G}_s)$ which is equivalent to $\alpha\ge \frac{\ln \left(c_p\, \|P_q^{-1}M_{(p,q)}P_p\|\right)}{\lambda_p^*}$, for all $(p,q)\in \mathcal{E}(\mathcal{G}_s)$.
					\item $\alpha\ge \max_{(p,q)\in \mathcal{E}(\mathcal{G}_s)}\min_{M\in\mathbb{M}}\frac{\ln \left(c_p\, \|P_q^{-1}MP_p\|\right)}{\lambda_p^*}$.
				\end{itemize} 
				Note that the minimum over $\mathbb{M}$ (not containing $\bf{O}$) is attained since the objective function is continuous and $\mathbb{M}$ is compact. Hence we obtain
				\begin{eqnarray*}
					\tau^*=\max_{(p,q)\in\mathcal{E}(\mathcal{G}_s)}\,\min_{M\in\mathbb{M}}\,\frac{\ln\left( c_p\|P_q^{-1}MP_p\|\right)}{\lambda_p^*}.
				\end{eqnarray*}
				Using similar arguments, we obtain
				\begin{eqnarray*}
					\eta^*=-\max_{(p,q)\in\mathcal{E}(\mathcal{G}_u)}\,\min_{M\in\mathbb{M}}\,\frac{\ln \left(c_p\|P_q^{-1}MP_p\|\right)}{\mu_p^*}.
				\end{eqnarray*}
			\end{proof}
			
			Consequently, if we have a compact set of possible resets $\mathbb{M}$ not containing $\bf{O}$ and any signal $\sigma\in\mathcal{S}^{\mathcal{G}}[\tau,\eta]$ with $\tau\ge \tau^*$ and $\eta\le\eta^*$, there exists a collection of nonzero reset maps $\mathcal{R}\subseteq \mathbb{M}$ so that the reset switched system $\Sigma_{\sigma}^{\mathcal{R}}$ is stable. Also note that $\eta^*$ is dependent on the initial choice of $\epsilon$, hence can be made arbitrary large or small, and yields a corresponding dwell time $\tau^*$. So there is a relation (through the choice of Jordan basis matrices $P_p$) between $\tau^*$ and $\eta^*$. Such relations exist in the literature in the context of stability of switched systems with both stable and unstable subsystems, and are referred to as \textit{dwell-flee relations}~\cite{tripathi2022unistable}. The preceding theorem, thus, states that switched systems satisfying the above dwell-flee relations (dwell time constraints, when there are no unstable subsystems) allow us to choose resets from the given compact set of matrices so that the reset switched system is stable. We now address the extended version of issue~\ref{issue2}.

			\begin{theorem}
				Consider a switched system~\eqref{eq:system} governed by the graph $\mathcal{G}$, and a compact set $\mathbb{M}$ of real matrices of size $n$ not containing the zero matrix $\bf{O}$. Then there exist $\tau^*\ge 0$, $\eta^*> 0$, and a collection $\mathcal{I}\subseteq \mathbb{M}$ of nonzero impulses such that the impulsive switched system~\eqref{eq:ISS} is stable under arbitrary impulses for all switching signals $\sigma\in\mathcal{S}^{\mathcal{G}}[\tau^*,\eta^*]$.\end{theorem}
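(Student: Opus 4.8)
The plan is to mirror the proof of \Cref{thm:ConstraintsGivenResets}, while confronting the one genuinely new feature of the impulsive setting: since stability is required for \emph{every} selection function $\omega$, and $\omega$ may place any impulse of $\mathcal{I}$ at any switching instant, we can no longer assign one good matrix per edge. Reading off the flow expression for~\eqref{eq:ISS}, the factor appearing at a switch from subsystem $p$ to subsystem $q$ is $P_q^{-1}I_{\omega(t_k)}P_p\,\mathrm{e}^{J_p(t_k-t_{k-1})}$, and $\omega$ is free to choose $I_{\omega(t_k)}$ to be any element of $\mathcal{I}$. Consequently \emph{every} matrix of $\mathcal{I}$ must satisfy the relevant norm bound on \emph{every} edge. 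Invoking \Cref{rem:linearsystemdecay} exactly as in the proof of \Cref{thm:ResetsGivenConstraints}, this amounts to demanding that $\mathcal{I}\subseteq\left(\bigcap_{(p,q)\in\mathcal{E}(\mathcal{G}_s)}S_{p,q}^{\tau^*}\right)\cap\left(\bigcap_{(p,q)\in\mathcal{E}(\mathcal{G}_u)}U_{p,q}^{\eta^*}\right)$, with $S_{p,q}^{\tau}$ and $U_{p,q}^{\eta}$ the compact sets from~\eqref{eq:US}. The whole problem thus reduces to finding $\tau^*\ge 0$ and $\eta^*>0$ for which this intersection meets $\mathbb{M}$ in a nonempty set.

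First I would dispatch the unstable edges, which is where \Cref{hyp} enters. Fix any $\eta_{\min}>0$ and set $\epsilon=\min_{p\in\mathfrak{u}}\mathrm{e}^{-\mu_p^*\eta_{\min}}\in(0,1)$. Since $\mathbb{M}$ is compact and $\mathcal{G}_u$ is acyclic, \Cref{lemma:acyclic_cond} supplies Jordan basis matrices $P_p$ (depending on $\eta_{\min}$) with $\max\{c_p\|P_q^{-1}MP_p\|:(p,q)\in\mathcal{E}(\mathcal{G}_u),\,M\in\mathbb{M}\}<\epsilon$. The decisive point is that the bound in~\eqref{eq:acyclic_cond} is uniform over all of $\mathbb{M}$, so this yields $\mathbb{M}\subseteq\bigcap_{(p,q)\in\mathcal{E}(\mathcal{G}_u)}U_{p,q}^{\eta_{\min}}$ --- the entire compact collection fits, not merely one representative per edge, which is precisely what the impulsive case requires. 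In particular $\eta^*\ge\eta_{\min}>0$.

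With these $P_p$ now frozen, the stable edges pose no difficulty: as $\tau$ grows each $S_{p,q}^{\tau}$ expands and $\bigcup_{\tau\ge 0}S_{p,q}^{\tau}=M_n(\mathbb{R})\setminus\{\mathbf{O}\}$, so compactness of $\mathbb{M}$ together with $\mathbf{O}\notin\mathbb{M}$ furnishes a finite $\tau^*$ with $\mathbb{M}\subseteq\bigcap_{(p,q)\in\mathcal{E}(\mathcal{G}_s)}S_{p,q}^{\tau^*}$. Taking $\mathcal{I}=\mathbb{M}$ then makes the required intersection contain $\mathbb{M}$, and the optimal constants, attained by continuity on the compact $\mathbb{M}$, are
\begin{eqnarray*}
\tau^* &=& \max\left\{0,\ \max_{(p,q)\in\mathcal{E}(\mathcal{G}_s)}\,\max_{M\in\mathbb{M}}\,\frac{\ln\left(c_p\|P_q^{-1}MP_p\|\right)}{\lambda_p^*}\right\},\\
\eta^* &=& -\max_{(p,q)\in\mathcal{E}(\mathcal{G}_u)}\,\max_{M\in\mathbb{M}}\,\frac{\ln\left(c_p\|P_q^{-1}MP_p\|\right)}{\mu_p^*},
\end{eqnarray*}
the only change from \Cref{thm:ConstraintsGivenResets} being that each $\min_{M}$ is replaced by $\max_{M}$, reflecting that all impulses, rather than one per edge, must obey the bounds.

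Finally I would verify stability directly from the flow: for $\sigma\in\mathcal{S}^{\mathcal{G}}[\tau^*,\eta^*]$ and any $\omega$ with values in $\mathcal{I}=\mathbb{M}$, every factor $P_q^{-1}I_{\omega(t_k)}P_p\,\mathrm{e}^{J_p(t_k-t_{k-1})}$ in the product has spectral norm at most $1$, while the leading term $P_{\sigma(t_j)}\mathrm{e}^{J_{\sigma(t_j)}(t-t_j)}$ is bounded uniformly in $t$ (using $t-t_j<\eta^*$ on unstable modes and boundedness of the finitely many $\|P_p\|$). Hence $\|x(t)\|\le C\|x(0)\|$ with $C$ independent of $t$ and $\omega$, giving stability for every $\omega$. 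I expect the only genuine subtlety to be the coupling highlighted above: the matrices $P_p$ are tuned to the unstable edges first, and one must confirm that freezing them cannot obstruct the stable-edge step --- which it does not, since $\tau^*$ may be taken as large as needed.
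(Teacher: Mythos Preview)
Your proposal is correct and follows essentially the same approach as the paper: invoke \Cref{lemma:acyclic_cond} (via \Cref{hyp}) to place all of $\mathbb{M}$ inside $\bigcap_{(p,q)\in\mathcal{E}(\mathcal{G}_u)}U_{p,q}^{\eta_{\min}}$, then use compactness of $\mathbb{M}$ and $\mathbf{O}\notin\mathbb{M}$ to find a finite $\tau_{\max}$ with $\mathbb{M}\subseteq\bigcap_{(p,q)\in\mathcal{E}(\mathcal{G}_s)}S_{p,q}^{\tau_{\max}}$, and take $\mathcal{I}=\mathbb{M}$. The paper's proof is terser and adds one observation you omit: once existence is settled with $\mathcal{I}=\mathbb{M}$, one may also trade off by choosing $\tau^*<\tau_{\max}$ and $\eta^*>\eta_{\min}$ at the cost of shrinking $\mathcal{I}$ to the (still nonempty) intersection $\mathbb{M}\cap\bigl(\bigcap_{\mathcal{E}(\mathcal{G}_u)}U_{p,q}^{\eta^*}\bigr)\cap\bigl(\bigcap_{\mathcal{E}(\mathcal{G}_s)}S_{p,q}^{\tau^*}\bigr)$; conversely, your explicit $\max_M$ formulas for $\tau^*,\eta^*$ and the direct flow-based stability verification go beyond what the paper writes out.
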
 
			\begin{proof}
				Referring to the proof of \Cref{thm:ConstraintsGivenResets}, we have $\eta_{\min}\ge 0$, $\tau_{\max}\ge 0$ and Jordan basis matrix $P_p$ of $A_p$ for each $p\in\mathcal{P}$ such that $\mathbb{M}\subseteq \cap_{(p,q)\in \mathcal{E}(\mathcal{G}_s)}S_{p,q}^{\tau_{\max}}$ and $\mathbb{M}\subseteq \cap_{(p,q)\in \mathcal{E}(\mathcal{G}_u)}U_{p,q}^{\eta_{\min}}$. This shows existence of $\tau^*$ and $\eta^*$. The set of impulses can be taken to be $\mathcal{I}=\mathbb{M}$ if $\tau^*=\tau_{\max}$ and $\eta^*=\eta_{\min}$. But since $ \cap_{(p,q)\in\mathcal{E}(\mathcal{G}_u)}U_{p,q}^{\beta}$ shrinks as $\beta$ increases and $ \cap_{(p,q)\in\mathcal{E}(\mathcal{G}_s)}S_{p,q}^{\alpha}$ expands as $\alpha$ increases, it is possible to choose $\tau^*<\tau_{\max}$ and $\eta^*>\eta_{\min}$ such that $\mathbb{M}\bigcap \left(\cap_{(p,q)\in\mathcal{E}(\mathcal{G}_u)}U_{p,q}^{\eta^*}\right)\bigcap \left(\cap_{(p,q)\in\mathcal{E}(\mathcal{G}_s)}S_{p,q}^{\tau^*}\right)\ne\emptyset$, and then we choose $\mathcal{I}=\mathbb{M}\bigcap \left(\cap_{(p,q)\in\mathcal{E}(\mathcal{G}_u)}U_{p,q}^{\eta^*}\right)\bigcap \left(\cap_{(p,q)\in\mathcal{E}(\mathcal{G}_s)}S_{p,q}^{\tau^*}\right)$.    
			\end{proof}

				The next two results address issues~\ref{issue3} and~\ref{issue4}, where for a given reset/impulsive switched system, we obtain time constraints on signals so that the reset/impulsive switched system is stable.
				
				\begin{theorem}\label{thm:DTforRSS_flow}
					The reset switched system~\eqref{eq:RSS} governed by the graph $\mathcal{G}$ is stable for all switching signals $\sigma\in\mathcal{S}^{\mathcal{G}}[\tau_R,\eta_R]$, where \[
					\tau_R=\max_{(p,q)\in\mathcal{E}(\mathcal{G}_s)}\frac{\ln \left(c_p\, \|P_q^{-1}R_{(p,q)}P_p\|\right)}{\lambda_p^*},\ \ \ \eta_R=-\max_{(p,q)\in\mathcal{E}(\mathcal{G}_u)}\frac{\ln \left(c_p\, \|P_q^{-1}R_{(p,q)}P_p\|\right)}{\mu_p^*}.
					\]
				\end{theorem}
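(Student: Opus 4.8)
The plan is to run the argument of \Cref{thm:ResetsGivenConstraints} in reverse: there the time constraints were prescribed and the resets chosen to meet them, whereas here the resets $R_{(p,q)}$ are fixed and the constraints $\tau_R,\eta_R$ are recovered by inverting the scalar inequalities $K_{p,q}(t)\le 1$, with $K_{p,q}(t)=\|P_q^{-1}R_{(p,q)}P_p{\rm{e}}^{J_p t}\|$ as in that proof. First I would record, via \Cref{rem:linearsystemdecay}, the estimate $K_{p,q}(t)\le c_p\|P_q^{-1}R_{(p,q)}P_p\|\,{\rm{e}}^{-\lambda_p^* t}$ for $p\in\mathfrak{s}$ and $K_{p,q}(t)\le c_p\|P_q^{-1}R_{(p,q)}P_p\|\,{\rm{e}}^{\mu_p^* t}$ for $p\in\mathfrak{u}$. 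Solving $c_p\|P_q^{-1}R_{(p,q)}P_p\|{\rm{e}}^{-\lambda_p^* t}\le 1$ gives $t\ge \ln(c_p\|P_q^{-1}R_{(p,q)}P_p\|)/\lambda_p^*$, and solving $c_p\|P_q^{-1}R_{(p,q)}P_p\|{\rm{e}}^{\mu_p^* t}\le 1$ gives $t\le -\ln(c_p\|P_q^{-1}R_{(p,q)}P_p\|)/\mu_p^*$; taking the worst edge in $\mathcal{E}(\mathcal{G}_s)$ and in $\mathcal{E}(\mathcal{G}_u)$ respectively produces exactly the stated $\tau_R$ and $\eta_R$. Consequently, for any $\sigma\in\mathcal{S}^{\mathcal{G}}[\tau_R,\eta_R]$ a dwell time $t_k-t_{k-1}\ge\tau_R$ in a stable mode and a flee time $t_k-t_{k-1}\le\eta_R$ in an unstable mode are precisely the thresholds that force each intermediate factor $K_{\sigma(t_{k-1}),\sigma(t_k)}(t_k-t_{k-1})$ in the product of~\eqref{eq:flow:RSS-new} to be at most $1$.

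Next I would pass from this to a uniform bound on $\|x(t)\|$. Submultiplicativity of the spectral norm gives that the parenthesised product in~\eqref{eq:flow:RSS-new} has norm at most $\prod_k K_{\sigma(t_{k-1}),\sigma(t_k)}(t_k-t_{k-1})\le 1$, so $\|x(t)\|\le\|P_{\sigma(t_j)}\|\,\|{\rm{e}}^{J_{\sigma(t_j)}(t-t_j)}\|\,\|P_{\sigma(t_0)}^{-1}\|\,\|x(0)\|$. The outer basis factors are bounded by $\max_p\|P_p\|$ and $\max_p\|P_p^{-1}\|$, and the leading exponential is controlled by \Cref{rem:linearsystemdecay} once more: if $\sigma(t_j)\in\mathfrak{s}$ then $\|{\rm{e}}^{J_{\sigma(t_j)}(t-t_j)}\|\le c_{\sigma(t_j)}$, while if $\sigma(t_j)\in\mathfrak{u}$ the flee time forces $t-t_j<t_{j+1}-t_j\le\eta_R$, whence $\|{\rm{e}}^{J_{\sigma(t_j)}(t-t_j)}\|\le c_{\sigma(t_j)}{\rm{e}}^{\mu_{\sigma(t_j)}^*\eta_R}$. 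As $\mathcal{P}$ is finite all of these are dominated by a single constant $C'$ independent of $\sigma$ and $t$, yielding $\|x(t)\|\le C'\|x(0)\|$, which verifies the $\epsilon$--$\delta$ definition of stability with $\delta=\epsilon/C'$.

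The point that requires genuine care, and which I regard as the crux, is that the class $\mathcal{S}^{\mathcal{G}}[\tau_R,\eta_R]$ be non-degenerate, i.e. that $\eta_R>0$; this is exactly where \Cref{hyp} is used. From its formula, $\eta_R>0$ holds if and only if $c_p\|P_q^{-1}R_{(p,q)}P_p\|<1$ for every unstable edge $(p,q)\in\mathcal{E}(\mathcal{G}_u)$, which may fail for an arbitrary fixed choice of Jordan bases. Since $\mathcal{G}_u$ is acyclic, I would apply \Cref{lemma:acyclic_cond} to the finite, hence compact, collection $\mathbb{M}=\mathcal{R}$ to rescale the Jordan basis matrices $P_p$ so that $\max\{c_p\|P_q^{-1}R_{(p,q)}P_p\|:(p,q)\in\mathcal{E}(\mathcal{G}_u)\}<\epsilon<1$, which makes $\eta_R$ strictly positive. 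With the bases so chosen the two displayed formulas define honest constraints (and $\tau_R$, if negative, simply imposes no dwell restriction), and the estimates above go through verbatim.
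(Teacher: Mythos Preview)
Your proposal is correct and follows essentially the same approach as the paper, which simply says the proof ``follows along the lines of the proof of \Cref{thm:ResetsGivenConstraints}.'' You have in fact supplied considerably more detail than the paper does, including the explicit uniform bound on $\|x(t)\|$ via the outer factors in~\eqref{eq:flow:RSS-new} and the careful verification, via \Cref{lemma:acyclic_cond} under \Cref{hyp}, that the Jordan bases can be chosen so that $\eta_R>0$; the paper leaves all of this implicit.
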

				
				\begin{proof}
					The proof follows along the lines of the proof of \Cref{thm:ResetsGivenConstraints}. 
				\end{proof}
				
				\begin{theorem}\label{thm:ISS:flow}
					The impulsive switched system~\eqref{eq:ISS} governed by the graph $\mathcal{G}$ and having a compact set of impulses $\mathcal{I}$ is stable under arbitrary impulses for all switching signals $\sigma\in\mathcal{S}^{\mathcal{G}}[\tau_I,\eta_I]$, where \[
					\tau_I=\max_{(p,q)\in\mathcal{E}(\mathcal{G}_s)}\max_{M\in\mathcal{I}}\frac{\ln \left(c_p\, \|P_q^{-1}M P_p\|\right)}{\lambda_p^*},\ \ \ \eta_I=-\max_{(p,q)\in\mathcal{E}(\mathcal{G}_u)}\max_{M\in\mathcal{I}}\frac{\ln \left(c_p\, \|P_q^{-1}M P_p\|\right)}{\mu_p^*}.
					\]
				\end{theorem}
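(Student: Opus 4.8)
The plan is to run the flow-based argument of \Cref{thm:ResetsGivenConstraints} and \Cref{thm:DTforRSS_flow}, but with the crucial modification that the matrix acting at a switching instant is no longer a fixed reset $R_{(p,q)}$ but an arbitrary element of the compact set $\mathcal{I}$; hence every estimate must be made uniform over $M\in\mathcal{I}$. First I would start from the product expression for the flow of~\eqref{eq:ISS} and isolate the generic factor $P_{\sigma(t_k)}^{-1}I_{\omega(t_k)}P_{\sigma(t_{k-1})}{\rm{e}}^{J_{\sigma(t_{k-1})}(t_k-t_{k-1})}$. Writing $\sigma(t_{k-1})=p$, $\sigma(t_k)=q$, and $I_{\omega(t_k)}=M\in\mathcal{I}$, its spectral norm is at most $\|P_q^{-1}MP_p\|\,\|{\rm{e}}^{J_p(t_k-t_{k-1})}\|$. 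Applying \Cref{rem:linearsystemdecay} splits this into two regimes: for $p\in\mathfrak{s}$ the factor is bounded by $c_p\|P_q^{-1}MP_p\|{\rm{e}}^{-\lambda_p^*(t_k-t_{k-1})}$, and for $p\in\mathfrak{u}$ by $c_p\|P_q^{-1}MP_p\|{\rm{e}}^{\mu_p^*(t_k-t_{k-1})}$. Solving each inequality ``(factor)$\le 1$'' for the holding time $t_k-t_{k-1}$ gives the thresholds $\frac{\ln(c_p\|P_q^{-1}MP_p\|)}{\lambda_p^*}$ (a lower bound, stable case) and $-\frac{\ln(c_p\|P_q^{-1}MP_p\|)}{\mu_p^*}$ (an upper bound, unstable case).

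Next, to make these uniform over all impulse assignments $\omega$, I would maximize over $M\in\mathcal{I}$ and over the relevant edges. Since $\mathcal{I}$ is compact and does not contain $\mathbf{O}$, the map $M\mapsto\ln(c_p\|P_q^{-1}MP_p\|)$ is continuous, so its maximum over $\mathcal{I}$ is attained and finite; maximizing also over $(p,q)\in\mathcal{E}(\mathcal{G}_s)$ (resp. $\mathcal{E}(\mathcal{G}_u)$) reproduces exactly $\tau_I$ (resp. $\eta_I$). Consequently, for any $\sigma\in\mathcal{S}^{\mathcal{G}}[\tau_I,\eta_I]$ and any $\omega$, every bracketed factor has spectral norm at most $1$: the dwell time $t_k-t_{k-1}\ge\tau_I$ controls the stable factors and the flee time $t_k-t_{k-1}\le\eta_I$ controls the unstable ones, where $\mathcal{G}$-admissibility ensures that each realized pair $(p,q)$ lies in $\mathcal{E}(\mathcal{G}_s)$ or $\mathcal{E}(\mathcal{G}_u)$ according as $p\in\mathfrak{s}$ or $p\in\mathfrak{u}$.

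Finally I would bound the whole trajectory. The product $\prod_{k=1}^{j}(\cdots)$ has norm at most $1$, so on $[t_j,t_{j+1})$ the flow obeys $\|x(t)\|\le\|P_{\sigma(t_j)}\|\,\|{\rm{e}}^{J_{\sigma(t_j)}(t-t_j)}\|\,\|P_{\sigma(t_0)}^{-1}\|\,\|x(0)\|$. The leading pair is uniformly bounded: by $c_{\sigma(t_j)}\|P_{\sigma(t_j)}\|$ when $\sigma(t_j)\in\mathfrak{s}$, and by $c_{\sigma(t_j)}\|P_{\sigma(t_j)}\|{\rm{e}}^{\mu_{\sigma(t_j)}^*\eta_I}$ when $\sigma(t_j)\in\mathfrak{u}$, since then $t-t_j<t_{j+1}-t_j\le\eta_I$. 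Hence $\|x(t)\|\le C\|x(0)\|$ for a constant $C$ independent of $t$, $\sigma$, and $\omega$, and choosing $\delta=\epsilon/C$ yields stability under arbitrary impulses.

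The main obstacle I anticipate is not the telescoping estimate but guaranteeing that the flee-time class $\mathcal{S}^{\mathcal{G}}[\tau_I,\eta_I]$ is non-degenerate, i.e. that $\eta_I>0$. This forces $c_p\|P_q^{-1}MP_p\|<1$ for every unstable edge $(p,q)$ and every $M\in\mathcal{I}$, so that the relevant logarithms are strictly negative; this is not automatic and is precisely where \Cref{hyp} (acyclicity of $\mathcal{G}_u$) must be invoked through \Cref{lemma:acyclic_cond}, applied with $\mathbb{M}=\mathcal{I}$, to pre-select Jordan basis matrices $P_p$ making these quantities strictly less than one. Once that choice is fixed, the formulas for $\tau_I$ and $\eta_I$ are well defined with $\eta_I>0$, and the argument above goes through.
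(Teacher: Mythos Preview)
Your proposal is correct and follows essentially the same flow-based approach as the paper: bound each factor $\|P_q^{-1}MP_p\|\,\|{\rm e}^{J_p(t_k-t_{k-1})}\|$ by $1$ using \Cref{rem:linearsystemdecay}, then take the worst case over $M\in\mathcal{I}$ and over edges to recover $\tau_I$ and $\eta_I$, with compactness of $\mathcal{I}$ guaranteeing the maxima are attained. Your write-up is in fact more explicit than the paper's on two points the paper leaves implicit---the uniform bound on the leading factor $\|P_{\sigma(t_j)}\|\,\|{\rm e}^{J_{\sigma(t_j)}(t-t_j)}\|$ within the current interval, and the need to invoke \Cref{hyp} via \Cref{lemma:acyclic_cond} to ensure $\eta_I>0$---but the underlying argument is the same.
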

				
				\begin{proof}
					Let $\sigma\in\mathcal{S}[\tau_I,\eta_I]$. If $p\in\mathfrak{u}$, the time spent in the subsystem $A_p$, say $\eta$, satisfies $\eta\le\eta_I$. Thus, we have $\|P_q^{-1}MP_p\|\le c_p^{-1}{\rm{e}}^{-\mu_p^*\eta}$ for all $M\in\mathcal{I}$. Similarly, if $p\in\mathfrak{s}$, the time spent in the subsystem $A_p$, say $\tau$, satisfies $\tau\ge\tau_I$. Then $\|P_q^{-1}MP_p\|\le c_p^{-1}{\rm{e}}^{\lambda_p^*\tau}$ for all $M\in\mathcal{I}
					$. Hence the result follows. 		
					Note that for each $p,q\in\mathcal{P}$, the functions $f_{p,q}(M)= \ln \left(c_p\, \|P_q^{-1}M P_p\|\right)/\lambda_p^*$ and $g_{p,q}(M)= -\ln \left(c_p\, \|P_q^{-1}M P_p\|\right)/\mu_p^*$ are continuous, hence attain maximum and minimum, respectively, in compact domains. Thus the definitions of $\tau_I$ and $\eta_I$ make sense. 
				\end{proof}


					%

					\section{Using Multiple Lyapunov Functions}\label{sec:MLF}
					
					A standard way of proving stability of an $n$-dimensional system $\dot{x}=f(x)$, where $f:\mathbb{R}^n\rightarrow \mathbb{R}^n$, is to find a positive definite $\mathcal{C}^1$-function $V\colon\mathbb{R}^n\to\mathbb{R}$, known as a \textit{Lyapunov function}, which decreases along any solution trajectory of the system, that is, $\dot{V}(x(t))<0$, for each initial condition $x(0)\in\mathbb{R}^n$. The linear system $\dot{x}=Ax$ is stable if and only if $A$ is a Hurwitz matrix. For a symmetric positive definite matrix $Q$, the function $V(x)=x^\top Q x$ is a Lyapunov function for the linear system $\dot{x}=A x$ if and only if $QA+A^\top Q$ is negative definite. A known extension of this method to address the stability of switched systems is known as \textit{Multiple Lyapunov functions} (MLFs) approach, which will be used in this section. 
					
					\subsection{When all subsystems are stable}\label{sec:MLF:allstable}
					
					For a given switched system, if all the subsystems share a common Lyapunov function, then it serves as a Lyapunov function for the switched system as well, and stability of the switched system under arbitrary switching is guaranteed. Existence of a common Lyapunov function is not always possible. The MLF approach for stability of the switched system~\eqref{eq:system} is a two-step process: \begin{enumerate}
						\item For each $p\in\mathcal{P}$, obtain a quadratic Lyapunov function $V_p(x)=x^\top Q_p x$ for each subsystem $\dot{x}=A_p x$. For a given signal $\sigma$, construct the piecewise Lyapunov function along the trajectory as $V(x(t)):=V_{\sigma(t)}(x(t))$, for $t\ge 0$, which switches between to the Lyapunov function $V_p$ in accordance with the active subsystem $\dot{x}=A_px$. By construction, $V$ decreases along any solution trajectory in between any two consecutive switching instances, that is, for all $k\ge 1$ and for any $x(0)\in\mathbb{R}^n$, $\dot{V}(x(t))<0$ when $t\in (t_{k-1},\,t_{k})$. However, the function $V$ is, in general, discontinuous, see~\cite{liberzon2003switching}.
						\item Relax the condition of $V$ decreasing along any solution trajectory by $V(x(t))$ decreasing along the sequence of switching instances $\{t_k\}_{k\ge 0}$ for any initial condition $x(0)\in\mathbb{R}^n$.	
					\end{enumerate}
					
					Thus the stability results for linear systems are obtained in terms of certain Linear Matrix Inequalities (LMIs). For instance, the following result by Hespanha and Morse in~\cite{hespanha2002switching} provides a sufficient condition for stability of the reset switched system~\eqref{eq:RSS} under arbitrary switching.
					
					\begin{theorem}[Hespanha and Morse~\cite{hespanha2002switching}]
						Suppose there exists a collection of symmetric, positive definite matrices $\{Q_p\in\,M_n(\mathbb{R})\colon\ p\in\mathcal{P}\}$ such that \begin{eqnarray*}
							Q_pA_p+A_p^\top Q_p<0, \text{ for all }\, p\in\mathcal{P}, \text{ and }
							R_{(q,p)}^\top Q_p R_{(q,p)}\le  Q_q, \text{ for all }\,p,q\in\mathcal{P},
						\end{eqnarray*}
						then the reset switched system~\eqref{eq:RSS} is stable under arbitrary switching.
					\end{theorem}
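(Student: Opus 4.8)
The plan is to follow the multiple Lyapunov function recipe described just above the statement: assign to each subsystem the quadratic $V_p(x)=x^\top Q_p x$, patch these into a single function $V(x(t)):=V_{\sigma(t)}(x(t))$ along the trajectory, show that $V$ is non-increasing in $t$ for every choice of signal, and then deduce Lyapunov stability from a standard eigenvalue sandwich. The first hypothesis handles the continuous evolution between switches, while the second hypothesis is exactly what controls the jumps of $V$ at the switching instants.

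First I would verify that $V$ does not increase between switches. Fix an arbitrary signal $\sigma$ and an interval $(t_{k-1},t_k)$ on which the active subsystem is $p=\sigma(t_{k-1})$, so $x$ solves $\dot{x}=A_p x$ there. Differentiating along the flow gives $\dot{V}(x(t))=x(t)^\top\bigl(A_p^\top Q_p+Q_p A_p\bigr)x(t)\le 0$ by the first hypothesis, the quadratic form being negative definite. Hence $V$ decreases on each open switching interval, and in particular $V(x(t_k^-))\le V(x(t_{k-1}))$.

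The key step is controlling the jump of $V$ across a switching instant $t_k$. Writing $p=\sigma(t_{k-1})$ and $q=\sigma(t_k)$, the reset is $x(t_k)=R_{(p,q)}\,x(t_k^-)$ and the active Lyapunov function just after the switch is $V_q$, so $V(x(t_k))=x(t_k^-)^\top R_{(p,q)}^\top Q_q R_{(p,q)}\,x(t_k^-)$. Applying the second hypothesis $R_{(q,p)}^\top Q_p R_{(q,p)}\le Q_q$ with the roles $p\leftrightarrow q$ interchanged (it holds for every ordered pair) yields $R_{(p,q)}^\top Q_q R_{(p,q)}\le Q_p$, whence $V(x(t_k))\le x(t_k^-)^\top Q_p\,x(t_k^-)=V(x(t_k^-))$. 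Getting this index bookkeeping right, namely matching $R_{(p,q)}$ (subsystem $p$ before, $q$ after) to the correct instance of the hypothesis, is the one place where care is genuinely needed; everything else is routine. Combining the two estimates gives $V(x(t_k))\le V(x(t_k^-))\le V(x(t_{k-1}))$ for every $k$, and since $V$ also decreases on the intervening intervals we conclude $V(x(t))\le V(x(0))$ for all $t\ge 0$, uniformly over all signals.

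Finally I would close with the eigenvalue sandwich. Because $\mathcal{P}$ is finite and each $Q_p$ is symmetric positive definite, set $\alpha=\min_{p\in\mathcal{P}}\lambda_{\min}(Q_p)>0$ and $\beta=\max_{p\in\mathcal{P}}\lambda_{\max}(Q_p)>0$, so that $\alpha\|x\|^2\le V_p(x)\le\beta\|x\|^2$ for every $p$ and every $x\in\mathbb{R}^n$. Then $\alpha\|x(t)\|^2\le V(x(t))\le V(x(0))\le\beta\|x(0)\|^2$, giving $\|x(t)\|\le\sqrt{\beta/\alpha}\,\|x(0)\|$ for all $t\ge 0$ and all admissible signals. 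Given $\epsilon>0$, choosing $\delta=\epsilon\sqrt{\alpha/\beta}$ then forces $\|x(t)\|<\epsilon$ whenever $\|x(0)\|<\delta$, which is precisely stability of the reset switched system under arbitrary switching.
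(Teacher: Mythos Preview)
Your proof is correct. The paper does not actually supply a proof for this cited theorem; it merely states it as a known result from~\cite{hespanha2002switching} and then proves its dwell-time generalization (\Cref{thm:dtRSS}) by exactly the multiple Lyapunov function argument you give: quadratic $V_p$, decrease between switches from $Q_pA_p+A_p^\top Q_p<0$, non-increase across switches from the reset inequality, and the eigenvalue sandwich $\|x(t)\|\le\sqrt{\max_p\lambda_{\max}(Q_p)/\min_p\lambda_{\min}(Q_p)}\,\|x(0)\|$ to conclude.
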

					
					Another result of our interest is the following result by Geromel and Colaneri in~\cite{geromel2006stability} which gives a sufficient condition for stability of switched system~\eqref{eq:system} under a dwell time switching strategy.
					
					\begin{theorem}[Geromel and Colaneri~\cite{geromel2006stability}]
						Assume that for some $\tau>0$, there exists a collection of symmetric, positive definite matrices $\{Q_p\in\,M_n(\mathbb{R})\colon\ p\in\mathcal{P}\}$ such that \begin{eqnarray*}
							Q_pA_p+A_p^\top Q_p< 0, \text{ for all }\, p\in\mathcal{P}, \text{ and } {\rm{e}}^{A_q^\top\tau} Q_p {\rm{e}}^{A_q\tau}\le  Q_q, \text{ for all }\,p,q\in\mathcal{P},
						\end{eqnarray*}
						then the switched system~\eqref{eq:system} is stable for all signals $\sigma\in\mathcal{S}_s[\tau]$.
					\end{theorem}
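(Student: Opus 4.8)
The plan is to run the multiple Lyapunov function scheme described just above the statement. I would take $V(x(t)) = x(t)^\top Q_{\sigma(t)} x(t)$, the piecewise-quadratic function that switches with the active subsystem, and monitor its values at the switching instants $\{t_k\}$. Writing $\sigma_k := \sigma(t_k)$, the subsystem $A_{\sigma_k}$ is active on $[t_k,t_{k+1})$ and the flow is continuous there, so the quantity to track is $W_k := x(t_k)^\top Q_{\sigma_k} x(t_k)$, the Lyapunov value at the start of the $k$-th dwell interval. The goal is to show $\{W_k\}$ is non-increasing and then turn this into an $\epsilon$--$\delta$ bound on $\|x(t)\|$.

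Two facts drive the argument. First, the strict inequality $Q_pA_p+A_p^\top Q_p<0$ gives $\tfrac{d}{dt}V(x(t)) = x(t)^\top(Q_{\sigma_k}A_{\sigma_k}+A_{\sigma_k}^\top Q_{\sigma_k})x(t)<0$ on each interval $(t_k,t_{k+1})$, so $V$ strictly decreases between switches. Second, differentiating $s\mapsto {\rm{e}}^{A_p^\top s}Q_p{\rm{e}}^{A_p s}$ yields ${\rm{e}}^{A_p^\top s}(Q_pA_p+A_p^\top Q_p){\rm{e}}^{A_p s}<0$, so this matrix-valued map is decreasing in the Loewner order for $s\ge 0$; in particular ${\rm{e}}^{A_p^\top s}Q_p{\rm{e}}^{A_p s}\le Q_p$ for all $s\ge 0$.

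The heart of the proof is the comparison across a single switch. With $\Delta_k := t_{k+1}-t_k\ge \tau$ and the continuous flow $x(t_{k+1})={\rm{e}}^{A_{\sigma_k}\Delta_k}x(t_k)$, one has $W_{k+1}=x(t_k)^\top {\rm{e}}^{A_{\sigma_k}^\top \Delta_k}Q_{\sigma_{k+1}}{\rm{e}}^{A_{\sigma_k}\Delta_k}x(t_k)$. I would factor ${\rm{e}}^{A_{\sigma_k}\Delta_k}={\rm{e}}^{A_{\sigma_k}\tau}{\rm{e}}^{A_{\sigma_k}(\Delta_k-\tau)}$ and apply the hypotheses in turn: the second LMI, with $q=\sigma_k$ and $p=\sigma_{k+1}$, gives ${\rm{e}}^{A_{\sigma_k}^\top \tau}Q_{\sigma_{k+1}}{\rm{e}}^{A_{\sigma_k}\tau}\le Q_{\sigma_k}$, and since a congruence $M\mapsto M^\top(\cdot)M$ preserves the Loewner order, conjugating by ${\rm{e}}^{A_{\sigma_k}(\Delta_k-\tau)}$ and then invoking the monotonicity fact (valid because $\Delta_k-\tau\ge 0$) yields ${\rm{e}}^{A_{\sigma_k}^\top \Delta_k}Q_{\sigma_{k+1}}{\rm{e}}^{A_{\sigma_k}\Delta_k}\le Q_{\sigma_k}$. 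Hence $W_{k+1}\le x(t_k)^\top Q_{\sigma_k}x(t_k)=W_k$, so $\{W_k\}$ is non-increasing. The one subtlety to get right, which I expect to be the main obstacle, is exactly this handling of the excess dwell time $\Delta_k-\tau$: the hypothesis only controls the transition over the precise length $\tau$, so the decrease of ${\rm{e}}^{A_p^\top s}Q_p{\rm{e}}^{A_p s}$ is what absorbs the remaining $\Delta_k-\tau$ and lets a uniform dwell time $\ge \tau$ (rather than exactly $\tau$) suffice.

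Finally I would pass from $\{W_k\}$ to the stability estimate. Combining the within-interval decrease with $W_k\le W_0$ gives $V(x(t))\le W_0$ for all $t\ge 0$. Setting $\underline{\lambda}=\min_{p\in\mathcal{P}}\lambda_{\min}(Q_p)>0$ and $\bar{\lambda}=\max_{p\in\mathcal{P}}\lambda_{\max}(Q_p)$, positive definiteness gives $\underline{\lambda}\,\|x(t)\|^2\le V(x(t))\le W_0\le \bar{\lambda}\,\|x(0)\|^2$, whence $\|x(t)\|\le \sqrt{\bar{\lambda}/\underline{\lambda}}\,\|x(0)\|$ for every $t\ge 0$. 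Given $\epsilon>0$, the choice $\delta=\epsilon\sqrt{\underline{\lambda}/\bar{\lambda}}$ then witnesses stability for every $\sigma\in\mathcal{S}_s[\tau]$, which is exactly the claimed conclusion (note that only stability, not asymptotic stability, is asserted, and the estimate above delivers precisely that).
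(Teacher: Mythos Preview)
Your proposal is correct and matches the paper's approach. The paper does not give a separate proof of this cited result, but the proof of the more general \Cref{thm:dtRSS} (which specializes to the present statement upon taking all reset matrices to be the identity) follows exactly the scheme you outline: piecewise-quadratic $V$, monotonicity of $W_k=V_{\sigma(t_k)}(x(t_k))$ across switches via the second LMI, decrease between switches via the first LMI, and the final eigenvalue bound $\|x(t)\|\le \sqrt{\max_p\lambda_{\max}(Q_p)/\min_p\lambda_{\min}(Q_p)}\,\|x(0)\|$. Your treatment of the excess dwell time $\Delta_k-\tau$ via the Loewner monotonicity of $s\mapsto {\rm{e}}^{A_p^\top s}Q_p{\rm{e}}^{A_p s}$ is in fact more explicit than the paper, which simply asserts that the key inequality ``follows \dots\ under the assumption that $t_{k+1}-t_k\ge\tau$.''
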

					
					The above two results motivate us to obtain a sufficient condition for stability of the reset switched system~\eqref{eq:RSS} under dwell time switching strategy, thereby addressing issue~\ref{issue3}. This condition is given in the following result.
					
					\begin{theorem}\label{thm:dtRSS}
						Given the reset switched system~\eqref{eq:RSS}, assume that for some $\tau>0$, there exists a collection of symmetric, positive definite matrices $\{Q_p\in\,M_n(\mathbb{R})\colon\ p\in\mathcal{P}\}$ such that \begin{eqnarray}
							Q_pA_p+A_p^\top Q_p&<&0\,\,\,\,\,\text{ for all }\, p\in\mathcal{P},\label{eq:mlf1rss}\\
							{\rm{e}}^{A_q^\top\tau}R_{(q,p)}^\top Q_p R_{(q,p)}{\rm{e}}^{A_q\tau}&\le& Q_q\,\text{ for all }\,p,q\in\mathcal{P},\label{eq:mlf2rss}
						\end{eqnarray}
						then the reset switched system~\eqref{eq:RSS} is stable for all signals $\sigma\in\mathcal{S}_s[\tau]$.
					\end{theorem}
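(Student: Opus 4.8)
The plan is to employ the multiple Lyapunov function approach described at the start of \Cref{sec:MLF}. Along any trajectory I would take the piecewise quadratic function $V(x(t)) = V_{\sigma(t)}(x(t))$, where $V_p(x) = x^\top Q_p x$, so that $V$ switches to the form associated with the currently active subsystem. Hypothesis \eqref{eq:mlf1rss} guarantees that on each open interval $(t_{k-1},t_k)$, where the active matrix is $A_{\sigma(t_{k-1})}$, we have $\dot{V}(x(t)) = x(t)^\top\bigl(Q_{\sigma(t_{k-1})}A_{\sigma(t_{k-1})}+A_{\sigma(t_{k-1})}^\top Q_{\sigma(t_{k-1})}\bigr)x(t)<0$; hence $V$ strictly decreases between consecutive switching instants.

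The crux is to show that the sampled sequence $\{V(x(t_k))\}_{k\ge 0}$ is non-increasing, and this is exactly where the reset inequality \eqref{eq:mlf2rss} together with the dwell-time bound $t_k-t_{k-1}\ge\tau$ enters. Writing $q=\sigma(t_{k-1})$ and $p=\sigma(t_k)$, the state just after the reset at $t_k$ is $x(t_k)=R_{(q,p)}\,{\rm e}^{A_q(t_k-t_{k-1})}x(t_{k-1})$. The difficulty is that \eqref{eq:mlf2rss} is stated for the single exponent $\tau$, while the dwell time only satisfies $t_k-t_{k-1}\ge\tau$. I would overcome this by splitting $t_k-t_{k-1}=\tau+s_k$ with $s_k\ge 0$ and factoring ${\rm e}^{A_q(\tau+s_k)}={\rm e}^{A_q\tau}{\rm e}^{A_q s_k}$. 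Setting $z={\rm e}^{A_q s_k}x(t_{k-1})=x(t_{k-1}+s_k)$, the reset inequality \eqref{eq:mlf2rss} applied to the pair $(q,p)$ gives $z^\top {\rm e}^{A_q^\top\tau}R_{(q,p)}^\top Q_p R_{(q,p)}{\rm e}^{A_q\tau}z\le z^\top Q_q z$, that is $V(x(t_k))\le V_q(x(t_{k-1}+s_k))$; the within-subsystem decrease of the preceding paragraph, applied over the sub-interval $[t_{k-1},t_{k-1}+s_k]$ on which $A_q$ is still active, then yields $V_q(x(t_{k-1}+s_k))\le V_q(x(t_{k-1}))=V(x(t_{k-1}))$. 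I expect this splitting-and-factoring step to be the main obstacle of the proof.

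Combining the two observations, for any $t\in[t_{k-1},t_k)$ one obtains $V(x(t))=V_q(x(t))\le V_q(x(t_{k-1}))=V(x(t_{k-1}))\le\cdots\le V(x(t_0))=V_{\sigma(0)}(x(0))$, so $V$ remains bounded by its initial value along the entire trajectory. Finally, since $\mathcal{P}$ is finite, setting $\alpha=\min_{p\in\mathcal{P}}\lambda_{\min}(Q_p)>0$ and $\beta=\max_{p\in\mathcal{P}}\lambda_{\max}(Q_p)$ yields $\alpha\|x(t)\|^2\le V(x(t))\le\beta\|x(0)\|^2$, whence $\|x(t)\|\le\sqrt{\beta/\alpha}\,\|x(0)\|$ for all $t$. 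This bound delivers Lyapunov stability: for a prescribed $\epsilon>0$ the choice $\delta=\epsilon\sqrt{\alpha/\beta}$ ensures that $\|x(0)\|<\delta$ implies $\|x(t)\|<\epsilon$ for all $t>0$, giving the claimed stability.
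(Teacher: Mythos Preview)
Your proposal is correct and follows essentially the same multiple Lyapunov function approach as the paper: define $V_p(x)=x^\top Q_p x$, use \eqref{eq:mlf1rss} for decrease between switches, use \eqref{eq:mlf2rss} to show $V(x(t_k))\le V(x(t_{k-1}))$ at switches, and finish with the eigenvalue sandwich $\lambda_{\min}(Q_p)\|x\|^2\le V_p(x)\le\lambda_{\max}(Q_p)\|x\|^2$. The only difference is that you make explicit the splitting $t_k-t_{k-1}=\tau+s_k$ and the factoring ${\rm e}^{A_q(\tau+s_k)}={\rm e}^{A_q\tau}{\rm e}^{A_q s_k}$ to pass from the single-$\tau$ inequality \eqref{eq:mlf2rss} to arbitrary dwell times $\ge\tau$, whereas the paper simply asserts that the inequality ``follows due to~\eqref{eq:mlf2rss} under the assumption that $t_{k+1}-t_k\ge\tau$''; your version is the more transparent justification of that step.
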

					
					\begin{proof}
						Due to~\eqref{eq:mlf1rss}, there exists a small enough $\lambda>0$ such that for all $p\in\mathcal{P}$, we have $Q_pA_p+A_p^\top Q_p<-2\lambda Q_p$. Consider a signal $\sigma\in\mathcal{S}_s[\tau]$. Let $V_p(x)=x^\top Q_px$, and define $V(x)=V_{\sigma}(x)$. Then using~\eqref{eq:mlf1rss}, we have $\dot{V}(x(t))=\dot{V_q}(x(t))=x(t)^\top \left(A_q^\top Q_q+Q_q A_q\right)x(t)<-2\lambda V_q(x(t))$, when $\sigma(t)=q$. If $\sigma$ switches from $p=\sigma(t_{k+1}^-)$ to $q=\sigma(t_{k+1})$, then \begin{eqnarray*}
							V_q(x(t_{k+1}))&=&x(t_{k+1})^\top\, Q_q\,x(t_{k+1})=x(t_{k+1}^-)^\top R_{(p,q)}^\top\, Q_q\,R_{(p,q)}x(t_{k+1}^-)\\
							&=& x(t_{k})^\top {\rm{e}}^{A_p^\top(t_{k+1}-t_k)}R_{(p,q)}^\top\, Q_q\,R_{(p,q)}{\rm{e}}^{A_p(t_{k+1}-t_k)}x(t_{k})\le  x(t_{k})^\top Q_px(t_{k})\\
							&=&V_p(x(t_k)),
						\end{eqnarray*} 
						where the inequality follows due to~\eqref{eq:mlf2rss} under the assumption that $t_{k+1}-t_k\ge \tau$. It follows that $V_{\sigma(t_{j})}(x(t_{j}))\le V_{\sigma(t_{i})}(x(t_{i}))$ for all $j\ge i$. Even though $V$ may be discontinuous at the discontinuities of $\sigma$, the following inequality is satisfied for all $t\in[t_i,t_{i+1})$: $V_{\sigma(t)}(x(t))\le {\rm{e}}^{-2\lambda(t-t_i)}V_{\sigma(t_i)}(x(t_i))\le {\rm{e}}^{-2\lambda(t-t_i)}V_{\sigma(t_0)}(x(t_0))\le V_{\sigma(t_0)}(x(t_0))$. Since $\lambda_{\min}(Q_p) \|x\|^2\le x^\top Q_p x\le \lambda_{\max}(Q_p) \|x\|^2$ for each $p\in\mathcal{P}$, we have $\|x(t)\|\le\sqrt{\frac{\max_p \lambda_{\max}(Q_p)}{\min_p \lambda_{\min}(Q_p)}}\,\|x(0)\|$, due to the equivalence of norms on $\mathbb{R}^n$ and the result follows.
					\end{proof}
					
					The above proof can be adapted to obtain the following result addressing issue~\ref{issue4}.
					
					\begin{theorem}\label{thm:MLF:issue5}
						Given the impulsive switched system~\eqref{eq:ISS} with a compact set of impulses $\mathcal{I}$, assume that for some $\tau>0$, there exists a collection of symmetric, positive definite matrices $\{Q_p\in\,M_n(\mathbb{R})\colon\ p\in\mathcal{P}\}$  such that \begin{eqnarray}
							Q_pA_p+A_p^\top Q_p&<&0,\,\,\,\,\,\text{ for all }\, p\in\mathcal{P},\label{eq:mlf1cpt}\\
							{\rm{e}}^{A_q^\top\tau}M^\top Q_p M{\rm{e}}^{A_q\tau}&\le& Q_q,\,\text{ for all }\,p,q\in\mathcal{P}\text{ and }M\in\mathcal{I},\label{eq:mlf2cpt}
						\end{eqnarray}
						are satisfied. Then the impulsive switched system~\eqref{eq:ISS} is stable under arbitrary impulses for all signals $\sigma\in\mathcal{S}_s[\tau]$.
					\end{theorem}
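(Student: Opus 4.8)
The plan is to closely mirror the multiple-Lyapunov-function argument of \Cref{thm:dtRSS}, replacing the fixed reset $R_{(p,q)}$ attached to a switch by the impulse $I_{\omega(t_k)}\in\mathcal{I}$ actually applied at that switch, and then to exploit the fact that hypothesis~\eqref{eq:mlf2cpt} is assumed \emph{uniformly} over all $M\in\mathcal{I}$. First I would fix an arbitrary $\omega\colon\{t_1,t_2,\dots\}\to\mathcal{Q}$ and an arbitrary $\sigma\in\mathcal{S}_s[\tau]$; since every estimate below will be independent of the particular $\omega$, stability under arbitrary impulses will follow at once. As in the earlier proof, from~\eqref{eq:mlf1cpt} and the finiteness of $\mathcal{P}$ I would extract a single $\lambda>0$ with $Q_pA_p+A_p^\top Q_p<-2\lambda Q_p$ for every $p\in\mathcal{P}$, set $V_p(x)=x^\top Q_px$, and define the piecewise function $V(x(t))=V_{\sigma(t)}(x(t))$. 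This gives continuous exponential decay $\dot V(x(t))<-2\lambda V_{\sigma(t)}(x(t))$ on each interval $(t_{k-1},t_k)$.

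The single new computation is at a switching instant $t_{k+1}$, say from $p=\sigma(t_{k+1}^-)$ to $q=\sigma(t_{k+1})$ with the applied impulse $M=I_{\omega(t_{k+1})}\in\mathcal{I}$. Writing $s=t_{k+1}-t_k\ge\tau$ (all subsystems being stable, $\sigma\in\mathcal{S}_s[\tau]$ forces this), I would compute $V_q(x(t_{k+1}))=x(t_k)^\top e^{A_p^\top s}M^\top Q_q M e^{A_p s}x(t_k)$ exactly as in \Cref{thm:dtRSS}, the only difference being that $R_{(p,q)}$ is replaced throughout by $M$.

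The key step, and the one deserving care, is to upgrade~\eqref{eq:mlf2cpt}, which is stated only at the threshold $s=\tau$, to the inequality $e^{A_p^\top s}M^\top Q_qM e^{A_p s}\le Q_p$ for every $s\ge\tau$ and every $M\in\mathcal{I}$. Relabelling $p\leftrightarrow q$ in~\eqref{eq:mlf2cpt} gives $W(\tau)\le Q_p$, where $W(s)=e^{A_p^\top s}M^\top Q_q Me^{A_p s}$. Writing $u=s-\tau\ge 0$ and using $W(s)=e^{A_p^\top u}W(\tau)e^{A_p u}$ together with the fact that congruence preserves the Loewner order, I obtain $W(s)\le e^{A_p^\top u}Q_p e^{A_p u}$; finally $\frac{d}{du}\bigl(e^{A_p^\top u}Q_pe^{A_p u}\bigr)=e^{A_p^\top u}(A_p^\top Q_p+Q_pA_p)e^{A_p u}<0$ by~\eqref{eq:mlf1cpt}, so $e^{A_p^\top u}Q_pe^{A_p u}\le Q_p$ and hence $W(s)\le Q_p$. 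Crucially this bound holds for \emph{every} $M\in\mathcal{I}$, so in particular for whatever impulse $M=I_{\omega(t_{k+1})}$ the chosen $\omega$ produces; this is precisely where ``arbitrary impulses'' is handled.

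Consequently $V_q(x(t_{k+1}))\le V_p(x(t_k))$, so the sampled sequence $\{V_{\sigma(t_k)}(x(t_k))\}_k$ is non-increasing, and combined with the interior decay I get $V_{\sigma(t)}(x(t))\le V_{\sigma(t_0)}(x(t_0))$ for all $t\ge0$. Sandwiching by $\lambda_{\min}(Q_p)\|x\|^2\le x^\top Q_px\le\lambda_{\max}(Q_p)\|x\|^2$ over the finitely many $p$ then yields $\|x(t)\|\le\sqrt{\max_p\lambda_{\max}(Q_p)/\min_p\lambda_{\min}(Q_p)}\,\|x(0)\|$, giving stability with the usual $\delta$--$\epsilon$ choice. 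Since none of the constants depend on $\omega$, the impulsive switched system~\eqref{eq:ISS} is stable under arbitrary impulses, as required. I expect no genuine obstacle beyond the threshold-to-$s\ge\tau$ upgrade described above; the compactness of $\mathcal{I}$ serves only to make~\eqref{eq:mlf2cpt} a meaningful uniform hypothesis (and to guarantee that the relevant suprema would be attained were one to read off explicit constants).
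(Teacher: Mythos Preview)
Your proposal is correct and follows exactly the route the paper takes: the paper's proof of \Cref{thm:MLF:issue5} simply says that it ``follows from that of \Cref{thm:dtRSS} by using conditions~\eqref{eq:mlf1cpt} and~\eqref{eq:mlf2cpt} instead,'' which is precisely the substitution $R_{(p,q)}\mapsto M=I_{\omega(t_{k+1})}$ you carry out. In fact you supply strictly more detail than the paper does---your explicit monotonicity argument upgrading~\eqref{eq:mlf2cpt} from $s=\tau$ to all $s\ge\tau$ fills in a step that both the paper's proof here and its proof of \Cref{thm:dtRSS} leave implicit.
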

					
					\begin{proof}
						The proof follows from that of \Cref{thm:dtRSS} by using conditions~\eqref{eq:mlf1cpt} and~\eqref{eq:mlf2cpt} instead.
					\end{proof}
					%
					%
					%
					%
					
					\subsection{When some subsystems are unstable}
					The methods used in \Cref{sec:MLF:allstable} can not be extended to switched systems containing unstable subsystems. For such systems, we use Lyapunov-like functions for the unstable subsystems. These functions are allowed to increase along the solution trajectories of the unstable subsystems but with a bounded increase rate. The increment caused by the unstable subsystems is compensated by the decrement produced by stable subsystems. 
						As earlier, let $\{t_k\}_{k\ge 1}$ denote the increasing sequence of discontinuities of $\sigma$. Define \begin{eqnarray*}
							N_s(0,t)=\#\{i\colon\, 0\le t_i< t,\,\sigma(t_i)\in\mathfrak{s}\} &,& N_u(0,t)=\#\{i\colon\, 0\le t_i< t,\,\sigma(t_i)\in\mathfrak{u}\},\\
							N(0,t)&=&N_s(0,t)+N_u(0,t),
						\end{eqnarray*}
						which is the number switches of $\sigma$ to stable subsystems, number of switches of $\sigma$ to unstable subsystems and total number of switches of $\sigma$ in the time interval $(0,t)$, respectively. With these notations, we address the issue~\ref{issue3}.

						\begin{theorem}\label{thm:MLF:mixed}
							Given the reset switched system~\eqref{eq:RSS}, suppose there exist a collection of symmetric, positive definite matrices $\{Q_p\in\,M_n(\mathbb{R})\colon\ p\in\mathcal{P}\}$, positive constants $\lambda_p$ for $p\in \mathfrak{s}$, $\mu_p$ for $p\in\mathfrak{u}$, and $\gamma_{(p,q)}$, for $p,q\in\mathcal{P}$ satisfying \begin{eqnarray*}
								Q_pA_p+A_p^\top Q_p\le -\lambda_p Q_p, \text{ for all }\,p\in\mathfrak{s}&,& 
								Q_pA_p+A_p^\top Q_p\le \mu_p Q_p, \text{ for all }\,p\in\mathfrak{u},\\
								R_{(p,q)}^\top Q_qR_{(p,q)}&\le& \gamma_{(p,q)}Q_p,\hspace{17pt}\text{ for all }\,p,q\in\mathcal{P}.
							\end{eqnarray*}
							Let $\lambda=\min_{p\in\mathfrak{s}}\,\{\lambda_p\}$, $\mu=\max_{p\in\mathfrak{u}}\,\{\mu_p\}$ and $\gamma=\max_{p,q\in\mathcal{P}}\,\{\gamma_{(p,q)}\}$. Then for any signal $\sigma\in\mathcal{S}[\tau,\eta]$ satisfying 
							\begin{eqnarray}\label{eq:limsup}
								\lim\sup_{t\to\infty}\left(-\lambda\, N_s(0,t)\,\tau+\mu\, N_u(0,t) \,\eta+N(0,t)\,\ln\gamma\right)<0, 
							\end{eqnarray}
							the reset switched system~\eqref{eq:RSS} is stable. 
						\end{theorem}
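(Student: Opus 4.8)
The plan is to build a multiple Lyapunov function along the trajectory and track its value along the switching instances, showing it is eventually bounded by a decaying geometric factor under hypothesis~\eqref{eq:limsup}. First I would set $V_p(x)=x^\top Q_px$ and define the piecewise function $V(x(t))=V_{\sigma(t)}(x(t))$ as in the proof of \Cref{thm:dtRSS}. The three matrix inequalities translate into pointwise estimates: for $p\in\mathfrak{s}$ the differential inequality $Q_pA_p+A_p^\top Q_p\le-\lambda_p Q_p$ gives $\dot V_p(x(t))\le-\lambda_p V_p(x(t))$, so $V$ decays by at least a factor ${\rm e}^{-\lambda_p(t_{k+1}-t_k)}\le{\rm e}^{-\lambda\tau}$ across a stable interval (using the dwell time $t_{k+1}-t_k\ge\tau$); for $p\in\mathfrak{u}$ the inequality $Q_pA_p+A_p^\top Q_p\le\mu_p Q_p$ gives $\dot V_p(x(t))\le\mu_p V_p(x(t))$, so $V$ can grow by at most a factor ${\rm e}^{\mu_p(t_{k+1}-t_k)}\le{\rm e}^{\mu\eta}$ across an unstable interval (using the flee time $t_{k+1}-t_k\le\eta$).

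Next I would account for the jumps. At a switching instant $t_{k+1}$, writing $p=\sigma(t_{k+1}^-)$ and $q=\sigma(t_{k+1})$, the reset sends $x(t_{k+1})=R_{(p,q)}x(t_{k+1}^-)$, and the third inequality $R_{(p,q)}^\top Q_qR_{(p,q)}\le\gamma_{(p,q)}Q_p$ gives
\begin{eqnarray*}
V_q(x(t_{k+1}))=x(t_{k+1}^-)^\top R_{(p,q)}^\top Q_qR_{(p,q)}x(t_{k+1}^-)\le\gamma_{(p,q)}\,V_p(x(t_{k+1}^-))\le\gamma\,V_p(x(t_{k+1}^-)).
\end{eqnarray*}
Combining the continuous evolution on each interval with this jump bound, I would chain the estimates from $t_0$ up to a general switching time $t_N$ with $N=N(0,t_N)$. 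Each of the $N_s(0,t_N)$ stable intervals contributes a factor at most ${\rm e}^{-\lambda\tau}$, each of the $N_u(0,t_N)$ unstable intervals contributes at most ${\rm e}^{\mu\eta}$, and each of the $N$ jumps contributes at most $\gamma$, yielding
\begin{eqnarray*}
V_{\sigma(t_N)}(x(t_N))\le\exp\!\left(-\lambda\,N_s(0,t_N)\,\tau+\mu\,N_u(0,t_N)\,\eta+N(0,t_N)\,\ln\gamma\right)V_{\sigma(t_0)}(x(t_0)).
\end{eqnarray*}

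Finally I would extract stability from this bound. Hypothesis~\eqref{eq:limsup} says the exponent in the display above has negative $\limsup$, so the prefactor stays bounded for all $N$ and tends to $0$ along a subsequence; together with the equivalence of norms $\lambda_{\min}(Q_p)\|x\|^2\le V_p(x)\le\lambda_{\max}(Q_p)\|x\|^2$ and the fact that $V$ does not increase on a stable interval and grows by a controlled amount on an unstable one, this controls $\|x(t)\|$ uniformly in terms of $\|x(0)\|$, giving stability. The main obstacle I anticipate is the bookkeeping on a general interval $t\in[t_N,t_{N+1})$ rather than exactly at the switching times: since the last active subsystem may be unstable, one must absorb the worst-case growth ${\rm e}^{\mu\eta}$ over that final partial interval into a single multiplicative constant, and one must argue that the $\limsup$ condition (an asymptotic statement) indeed yields a uniform bound on the entire sequence of prefactors — not merely eventual smallness — so that the stability estimate holds for all $t\ge0$ and not just for large $t$.
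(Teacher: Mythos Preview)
Your proposal is correct and follows essentially the same approach as the paper: define $V_p(x)=x^\top Q_p x$, derive the per-interval decay/growth estimates and the jump bound, and chain them to obtain the inequality $V_{\sigma(t_k)}(x(t_k))\le \gamma^{N(0,t_k)}{\rm e}^{-\lambda N_s(0,t_k)\tau+\mu N_u(0,t_k)\eta}V_{\sigma(t_0)}(x(t_0))$, from which stability is read off via~\eqref{eq:limsup}. If anything you are more careful than the paper, which stops at this inequality and simply asserts stability; your remarks about absorbing the final partial interval and about passing from the asymptotic $\limsup$ condition to a uniform bound on the prefactors identify exactly the bookkeeping the paper leaves implicit.
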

						
						\begin{remark}
							Observe that if all the subsystems are unstable, then $\gamma<1$ is a necessary condition for~\eqref{eq:limsup} to be satisfied. The condition reduces to $\gamma<e^{-\mu\eta}$. 
						\end{remark}
						
						\begin{proof}
							Consider a signal $\sigma\in\mathcal{S}[\tau,\eta]$. Define a function $V(x)=V_{\sigma}(x(t))$, where $V_p(x)=x^\top Q_p x$. Assume $p\in\mathfrak{s}$ (similar arguments for the case $p\in\mathfrak{u}$) is the subsystem active in $[t_0,t_1)$, we have $V_p(x(t))\le{\rm{e}}^{-\lambda_p(t-t_0)}V_p(x(t_0))$, for all $t\in[t_0,t_1)$. This implies that $V_p(x(t_1^-))\le {\rm{e}}^{-\lambda_p(t_1-t_0)}V_p(x(t_0))$. Suppose that $\sigma(t_1)=q$, then we have \begin{eqnarray*}
								V_q(x(t_1))=V_q(R_{(p,q)}x(t_1^-))=x^\top(t_1^-)R_{(p,q)}^\top Q_q R_{(p,q)}x(t_1^-)&\le& \gamma_{(p,q)}x^\top(t_1^-)Q_px(t_1^-)\\
								&=&\gamma_{(p,q)}V_p(x(t_1^-)).
							\end{eqnarray*}
							Thus, we have $V_q(x(t_1))\le\gamma_{(p,q)}{\rm{e}}^{-\lambda_p(t_1-t_0)}V_p(x(t_0))$. Similarly if $\sigma(t_2)=r$, we have \begin{eqnarray*}
								V_r(x(t_2))\le\left\{\begin{aligned}
									&\gamma_{(p,q)}\gamma_{(q,r)}{\rm{e}}^{-\lambda_p(t_1-t_0)}{\rm{e}}^{\mu_q(t_2-t_1)}\,V_p(x(t_0)), &\text{if }q\in\mathfrak{u}\\
									&\gamma_{(p,q)}\gamma_{(q,r)}{\rm{e}}^{-\lambda_p(t_1-t_0)}{\rm{e}}^{-\lambda_q(t_2-t_1)}\,V_p(x(t_0)), &\text{ if }q\in\mathfrak{s}.
								\end{aligned}\right.
							\end{eqnarray*}
							Continuing in a similar manner, we have \begin{equation}\label{eq:MLF:BoundLyap}
								V_{\sigma(t_k)}(x(t_k))\le \gamma^{N(0,t_k)}{\rm{e}}^{-\lambda\, N_s(0,t_k)\, \tau+\mu\, N_u(0,t_k)\,\eta}\, V_{\sigma(t_0)}(x(t_0)).
							\end{equation}
							
							Now if $\lim\sup_{k\to\infty}\left(-\lambda\, N_s(0,t_k)\,\tau+\mu\, N_u(0,t_k) \,\eta+N(0,t_k)\,\ln\gamma\right)<0$, stability of the reset switched system~\eqref{eq:RSS} is guaranteed.
						\end{proof}
						
						%
						
						We now present the final result of this section addressing the issue~\ref{issue4}.
						
						\begin{theorem}\label{thm:MLF:mixed:arbitrary}
							Given an impulsive switched system~\eqref{eq:ISS} with a compact set of impulses $\mathcal{I}$. Let there exist a collection of symmetric, positive definite matrices $\{Q_p\in\,M_n(\mathbb{R})\colon\ p\in\mathcal{P}\}$, constants $\lambda_p>0$ for $p\in \mathfrak{s}$, $\mu_p>0$ for $p\in\mathfrak{u}$, and $\gamma_{(p,q)}\ge 1$ for $p,q\in\mathcal{P}$ satisfying \begin{eqnarray*}
								Q_pA_p+A_p^\top Q_p&\le&-\lambda_p Q_p,\hspace{20pt}\text{for all }\,p\in\mathfrak{s},\\
								Q_pA_p+A_p^\top Q_p&\le&\mu_p Q_p,\hspace{28pt}\text{for all }\,p\in\mathfrak{u},\\
								M^\top Q_pM&\le& \gamma_{(q,p)}Q_q,\hspace{17pt}\text{for all }\,p,q\in\mathcal{P}\text{ and }M\in\mathcal{I}.
							\end{eqnarray*}
							Let $\lambda=\min_{p\in\mathfrak{s}}\,\{\lambda_p\}$, $\mu=\max_{p\in\mathfrak{u}}\,\{\mu_p\}$, and $\gamma=\max_{p,q\in\mathcal{P}}\,\{\gamma_{(p,q)}\}$. Then for any signal $\sigma\in\mathcal{S}[\tau,\eta]$ satisfying $\lim\sup_{t\to\infty}\left(-\lambda\, N_s(0,t)\,\tau+\mu\, N_u(0,t) \,\eta+N(0,t)\,\ln\gamma\right)<0$, the impulsive switched system~\eqref{eq:ISS} is stable under arbitrary impulses. 
						\end{theorem}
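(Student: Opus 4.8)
The plan is to mirror the argument of \Cref{thm:MLF:mixed} almost verbatim, the sole modification being that the role of the reset matrix $R_{(p,q)}$ is now played by an arbitrary impulse $M\in\mathcal{I}$, and that the third hypothesis is available \emph{uniformly} over all of $\mathcal{I}$. I would fix an arbitrary $\omega$ and an arbitrary signal $\sigma\in\mathcal{S}[\tau,\eta]$ satisfying the limsup condition, set $V_p(x)=x^\top Q_px$, and define the piecewise Lyapunov function $V(x(t))=V_{\sigma(t)}(x(t))$ as before.

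First I would use the two differential inequalities to control $V$ on each interval $[t_{k-1},t_k)$ on which $\sigma\equiv p$: the first hypothesis gives the decay $V_p(x(t_k^-))\le {\rm e}^{-\lambda_p(t_k-t_{k-1})}V_p(x(t_{k-1}))$ for $p\in\mathfrak{s}$, and the second gives the controlled growth $V_p(x(t_k^-))\le {\rm e}^{\mu_p(t_k-t_{k-1})}V_p(x(t_{k-1}))$ for $p\in\mathfrak{u}$. Invoking $\sigma\in\mathcal{S}[\tau,\eta]$ to bound the mode durations ($t_k-t_{k-1}\ge\tau$ in a stable mode, $t_k-t_{k-1}\le\eta$ in an unstable mode) and passing to the uniform rates $\lambda$ and $\mu$, these become the per-step factors ${\rm e}^{-\lambda\tau}$ and ${\rm e}^{\mu\eta}$.

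The only genuinely new step is the jump estimate at a switching instant. If $\sigma(t_k^-)=q$ and $\sigma(t_k)=p$, then $x(t_k)=I_{\omega(t_k)}x(t_k^-)$ with $I_{\omega(t_k)}\in\mathcal{I}$, and applying the third hypothesis with $M=I_{\omega(t_k)}$ gives
\[
V_p(x(t_k))=x(t_k^-)^\top M^\top Q_p M\,x(t_k^-)\le \gamma_{(q,p)}V_q(x(t_k^-))\le \gamma\,V_q(x(t_k^-)).
\]
The essential point I would emphasize is that this bound does not depend on which impulse was actually applied, precisely because the inequality $M^\top Q_p M\le\gamma_{(q,p)}Q_q$ is assumed for \emph{every} $M\in\mathcal{I}$; this uniformity (whose finiteness is underwritten by compactness of $\mathcal{I}$) is exactly what delivers stability under \emph{arbitrary} impulses, and it is the one place where the impulsive case genuinely differs from the reset case.

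Finally I would combine the two ingredients by induction over the switching times, exactly as in \Cref{thm:MLF:mixed}, to reach the analog of \eqref{eq:MLF:BoundLyap},
\[
V_{\sigma(t_k)}(x(t_k))\le \gamma^{N(0,t_k)}\,{\rm e}^{-\lambda\,N_s(0,t_k)\,\tau+\mu\,N_u(0,t_k)\,\eta}\,V_{\sigma(t_0)}(x(t_0)).
\]
Taking logarithms, the right-hand exponent equals $-\lambda\,N_s(0,t_k)\,\tau+\mu\,N_u(0,t_k)\,\eta+N(0,t_k)\ln\gamma$, which is precisely the quantity in the limsup hypothesis, hence eventually negative; so $V_{\sigma(t_k)}(x(t_k))\to0$, while the interior decay/growth bounds control $V$ between switching times. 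The sandwich $\lambda_{\min}(Q_p)\|x\|^2\le V_p(x)\le\lambda_{\max}(Q_p)\|x\|^2$ together with equivalence of norms then converts this into a uniform bound on $\|x(t)\|$ in terms of $\|x(0)\|$, giving stability. I do not anticipate a real obstacle: the entire difficulty is already resolved in \Cref{thm:MLF:mixed}, and the only mild care needed is the bookkeeping of accumulated factors across stable and unstable intervals, which is unchanged from the reset case.
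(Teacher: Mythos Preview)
Your proposal is correct and follows exactly the paper's approach: the paper's proof is the one-liner ``The given conditions imply that~\eqref{eq:MLF:BoundLyap} is satisfied for all $\omega$,'' and you have simply spelled out in detail how the uniform jump inequality $M^\top Q_pM\le\gamma_{(q,p)}Q_q$ for every $M\in\mathcal{I}$ yields the same bound~\eqref{eq:MLF:BoundLyap} regardless of the choice of $\omega$.
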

						
						\begin{proof}
							The given conditions imply that~\eqref{eq:MLF:BoundLyap} is satisfied for all $\omega$.
						\end{proof}
						
						\section{Convex hull of resets}\label{sec:convexhull}
						
						The dwell time constraints and dwell-flee relations for stability under arbitrary impulses obtained thus far have been in terms of infinitely many linear matrix inequalities when the collection of impulses is infinite (but compact), and standard LMI tools can not be applied here. However, these infinitely many inequalities reduce to only finitely many when the collection of impulses is a convex hull of finitely many matrices. Suppose we have a switched system~\eqref{eq:system} and the collection of impulses is given by a convex hull of $k$ matrices $M_1,\ldots,M_k\in M_n(\mathbb{R})$, that is, $\mathcal{I}=\text{conv}\{M_1,\ldots,M_k\}=\left\{\sum_{i=1}^k\gamma_iM_i\colon\ \gamma_i\ge 0\,\text{ for all }i,\text{ and }\sum_{i=1}^k\gamma_i=1\right\}$, Since the obtained LMIs are convex with respect to the impulse matrix variable, the LMIs can be reduced to finitely many by using the fact that the maximum value of a convex function over this convex hull is obtained at one of the vertices $M_i$,~\cite{jeter1986mathematical}. Specifically,
						\begin{enumerate}
							\item In \Cref{thm:MLF:issue5}, condition~\eqref{eq:mlf2cpt} consists of infinitely many inequalities if $\mathcal{I}$ is an infinite set. Suppose $\mathcal{I}=\text{conv}\{M_1,\ldots,M_k\}$. Denoting $\|x\|_{Q_p}=\sqrt{x^\top Q_p x}$, the inequality~\eqref{eq:mlf2cpt} can be rewritten as \[
							\|M{\rm{e}}^{A_q\tau}\,x\|_{Q_p}\le \|x\|_{Q_q}\hspace{10pt}\forall\,x\in\mathbb{R}^n,\,\forall p,q\in\mathcal{P},\,\forall M\in\,\mathcal{I}.
							\]
							For each $x\in\mathbb{R}^n$, the function $f_q(M)=\|M{\rm{e}}^{A_q\tau}\,x\|_{Q_p}$ is convex and hence attains its maximum at one of the vertices of the convex hull $\mathcal{I}$. Thus the above set of inequalities are satisfied for all $M\in\mathcal{I}$ if and only if these are satisfied for $M=M_1,\ldots,M_k$ only. Thus, the inequalities~\eqref{eq:mlf2cpt} reduce to $k\lvert \mathcal{P}\rvert\,\left(\lvert\mathcal{P}\rvert-1\right)$ many \[
							\|M_i{\rm{e}}^{A_q\tau}\,x\|_{Q_p}\le \|x\|_{Q_q}\hspace{10pt}\forall\,x\in\mathbb{R}^n,\,\forall p,q\in\mathcal{P},\,\forall i=1,\ldots,k.
							\]
							Thus in \Cref{thm:MLF:issue5}, condition~\eqref{eq:mlf2cpt} can be replaced by 
							\[
							{\rm{e}}^{A_q^\top\tau}M_i^\top Q_p M_i{\rm{e}}^{A_q\tau}\le Q_q\,\text{ for all }p,q\in\mathcal{P}\text{ and }i=1,\ldots,k.
							\]
							\item Using similar arguments, if $\mathcal{I}=\text{conv}\{M_1,\ldots,M_k\}$, in \Cref{thm:MLF:mixed:arbitrary}, the third set of inequalities $M^\top Q_pM\le \gamma_{(q,p)}Q_q$, $\text{for all }\,p,q\in\mathcal{P}\text{ and }M\in\mathcal{I}$, can be replaced by $k\lvert \mathcal{P}\rvert\,\left(\lvert\mathcal{P}\rvert-1\right)$ many inequalities given below: 
							\[
							M_i^\top Q_pM_i\le \gamma_{(q,p)}Q_q,\hspace{17pt}\text{for all }\,p,q\in\mathcal{P}\text{ and }i=1,\ldots,k.
							\]
							\item If $P_p,P_q\in M_n(\mathbb{R})$, the function $f_{p,q}(M)=\|P_q^{-1}MP_p\|$ is convex. Thus, the maximum value of the function over the convex hull $\mathcal{I}=\text{conv}\{M_1,\ldots,M_k\}$ is attained at one of the vertices $M_i$. This can be used to simplify the constraints for stability under arbitrary impulses obtained in \Cref{thm:ISS:flow}. The expressions for $\tau_I$ and $\eta_I$ can be reduced to 
							\begin{eqnarray*}
								\tau_I&=&\max_{(p,q)\in\mathcal{E}(\mathcal{G}_s)}\max_{i=1,\dots,k}\frac{\ln \left(c_p\, \|P_q^{-1}M_i P_p\|\right)}{\lambda_p^*},\\ \eta_I&=&-\max_{(p,q)\in\mathcal{E}(\mathcal{G}_u)}\max_{i=1,\dots,k}\frac{\ln \left(c_p\, \|P_q^{-1}M_i P_p\|\right)}{\mu_p^*}.
							\end{eqnarray*}
						\end{enumerate}

						\section{Mode-Dependent Constraints}\label{sec:MDDT}
						
						Finding dwell time constraint is one of the most common ways to establish stability of a given switched system when all subsystems are stable. However, the requirement of spending more than a fixed amount of time after every switching instance can be relaxed by allowing different dwell time constraints depending on the active subsystem (or mode). As a result, a larger class of stabilizing signals can be obtained, refer to \Cref{eg:bplssbistab}. This also allows us to select the switching times `on the fly' to ensure stability, if the information of the active mode is known as a feedback at every switching instance. Mode-dependent dwell time constraints have been used in literature to stabilize switched systems both in the continuous-time setting~\cite{briatMDDT}, and discrete-time setting~\cite{DEHGHAN20131804}. The more general \textit{mode-dependent average dwell time} constraints have also been widely used, in the literature, to stabilize switched systems with both stable and unstable subsystems, for instance~\cite{zhai2001stability}.
						
						Consider a switched system~\eqref{eq:system} and suppose the increasing sequence of switching times of the signal $\sigma$ is given by $\{t_k\}_{k\ge 1}$. Then $\sigma$ is said to have \textit{mode-dependent dwell times} $\{\tau_p\}_{p\in\mathfrak{s}}$ if $t_{k+1}-t_k\ge \tau_p$, whenever $\sigma(t_k)=p$. Further $\sigma$ is said to have \textit{mode-dependent flee times} $\{\eta_p\}_{p\in\mathfrak{u}}$ if $t_{k+1}-t_k\le \eta_p$, whenever $\sigma(t_k)=p$. Note that, we have earlier looked at situations when the dwell time and flee time are independent of the mode. Mode-dependent constraints allow for more effective stabilization of switched systems. Depending on the properties of individual subsystems, it is possible to design a slow-fast switching strategy between subsystems. We define the classes of signals having mode-dependent dwell times and mode-dependent flee times as follows:
						\begin{eqnarray*}
							\mathcal{S}_s^{M}\left(\{\tau_p\}_{p\in\mathcal{P}}\right)&=&\left\{\sigma\colon[0,\infty)\to\mathcal{P}\colon\ t_{k+1}-t_k\ge \tau_{\sigma(t_k)}\text{ for all }k\in\mathbb{N}\right\},\\
							\mathcal{S}_u^{M}\left(\{\eta_p\}_{p\in\mathcal{P}}\right)&=&\left\{\sigma\colon[0,\infty)\to\mathcal{P}\colon\ t_{k+1}-t_k\le \eta_{\sigma(t_k)}\text{ for all }k\in\mathbb{N}\right\}\\
							\mathcal{S}^{M}\left(\{\tau_p\}_{p\in\mathfrak{s}},\{\eta_p\}_{p\in\mathfrak{u}}\right)&=&\left\{\sigma\colon[0,\infty)\to\mathcal{P}\colon\ t_{k+1}-t_k\ge \tau_{\sigma(t_k)}\text{ if }\sigma(t_k)\in\mathfrak{s},\right.\\
							& &\hspace{93pt}\left. t_{k+1}-t_k\le \eta_{\sigma(t_k)}\text{ if }\sigma(t_k)\in\mathfrak{u}\right\}.
						\end{eqnarray*}
						These signal classes are generalizations of signal classes defined earlier in this paper. If $\tau_p=\tau$ for all $p\in\mathfrak{s}$ and $\eta_p=\eta$ for all $p\in\mathfrak{u}$, then $\mathcal{S}_s^{M}\left(\{\tau_p\}_{p\in\mathcal{P}}\right)=\mathcal{S}_s[\tau]$, $\mathcal{S}_u^{M}\left(\{\eta_p\}_{p\in\mathcal{P}}\right)=\mathcal{S}_u[\eta]$, and $\mathcal{S}^{M}\left(\{\tau_p\}_{p\in\mathfrak{s}},\{\eta_p\}_{p\in\mathfrak{u}}\right)=\mathcal{S}\left(\tau,\eta\right)$.
						
						The results obtained earlier involving dwell time and flee time can be generalized to involve mode-dependent dwell time and mode-dependent flee time. Here we just state them without proof. The results follow along the lines of proofs of the earlier statements.
						
						\begin{theorem}[Generalization of \Cref{thm:DTforRSS_flow}]\label{thm:gen:DTforRSS}
							The reset switched system~\eqref{eq:RSS} governed by graph $\mathcal{G}$ is stable for all $\sigma\in\mathcal{S}^{\mathcal{G}}\cap \mathcal{S}^{M}\left(\{\tau_p\}_{p\in\mathfrak{s}},\{\eta_p\}_{p\in\mathfrak{u}}\right)$, where \[
							\tau_p=\max_{
								q\colon (p,q)\in\mathcal{E}(\mathcal{G})}\frac{\ln \left(c_p\, \|P_q^{-1}R_{(p,q)}P_p\|\right)}{\lambda_p^*},\ \ \ \eta_p=-\max_{
								q\colon (p,q)\in\mathcal{E}(\mathcal{G})}\frac{\ln \left(c_p\, \|P_q^{-1}R_{(p,q)}P_p\|\right)}{\mu_p^*}.
							\]
						\end{theorem}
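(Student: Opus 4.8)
The plan is to mimic the flow-based proof of \Cref{thm:DTforRSS_flow}, but to track the contribution of each switching instant mode-by-mode rather than bundling all stable (resp. unstable) modes under a single dwell time $\tau_R$ (resp. flee time $\eta_R$). Recall that the flow of the reset switched system is given by~\eqref{eq:flow:RSS-new}, and that stability is guaranteed once each factor $K_{p,q}(t_k-t_{k-1})=\|P_q^{-1}R_{(p,q)}P_p\,{\rm{e}}^{J_p(t_k-t_{k-1})}\|$ appearing in the product is bounded by $1$. The whole argument reduces to showing that for a $\mathcal{G}$-admissible signal respecting the mode-dependent constraints, every such factor is at most $1$.

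First I would fix an edge $(p,q)\in\mathcal{E}(\mathcal{G})$ and let $s=t_k-t_{k-1}$ be the time spent in mode $p$ before switching to $q$. Applying submultiplicativity of the spectral norm and the bounds of \Cref{rem:linearsystemdecay} gives
\[
K_{p,q}(s)\le \|P_q^{-1}R_{(p,q)}P_p\|\,\|{\rm{e}}^{J_p s}\|\le
\begin{cases}
\|P_q^{-1}R_{(p,q)}P_p\|\,c_p\,{\rm{e}}^{-\lambda_p^* s}, & p\in\mathfrak{s},\\
\|P_q^{-1}R_{(p,q)}P_p\|\,c_p\,{\rm{e}}^{\mu_p^* s}, & p\in\mathfrak{u}.
\end{cases}
\]
For $p\in\mathfrak{s}$, the right-hand side is at most $1$ precisely when $s\ge \ln(c_p\|P_q^{-1}R_{(p,q)}P_p\|)/\lambda_p^*$; for $p\in\mathfrak{u}$, it is at most $1$ precisely when $s\le -\ln(c_p\|P_q^{-1}R_{(p,q)}P_p\|)/\mu_p^*$. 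Since these thresholds depend on $q$ as well as $p$, I would take the worst case over all outgoing edges from $p$, which is exactly how $\tau_p$ and $\eta_p$ are defined in the statement: the max over $q$ with $(p,q)\in\mathcal{E}(\mathcal{G})$ ensures the bound holds for whichever successor mode $q$ the signal actually selects.

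The remaining step is to assemble these factorwise bounds into global stability. For $\sigma\in\mathcal{S}^{\mathcal{G}}\cap\mathcal{S}^{M}(\{\tau_p\}_{p\in\mathfrak{s}},\{\eta_p\}_{p\in\mathfrak{u}})$, the admissibility guarantees each consecutive pair $(\sigma(t_{k-1}),\sigma(t_k))$ is an edge of $\mathcal{G}$, and the mode-dependent dwell/flee constraints give $t_k-t_{k-1}\ge\tau_{\sigma(t_{k-1})}$ when $\sigma(t_{k-1})\in\mathfrak{s}$ and $t_k-t_{k-1}\le\eta_{\sigma(t_{k-1})}$ when $\sigma(t_{k-1})\in\mathfrak{u}$. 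Combined with the threshold computation above, this yields $K_{\sigma(t_{k-1}),\sigma(t_k)}(t_k-t_{k-1})\le 1$ for every $k$, so the norm of the product in~\eqref{eq:flow:RSS-new} is bounded by $\|P_{\sigma(t_j)}\|\,\|{\rm{e}}^{J_{\sigma(t_j)}(t-t_j)}\|\,\|P_{\sigma(t_0)}^{-1}\|$ on the final (incomplete) mode; boundedness of this residual factor over the compact time window $t\in[t_j,t_{j+1})$, together with $\|x(0)\|$, delivers stability. The main subtlety to watch is that for an unstable terminal mode the residual factor ${\rm{e}}^{J_{\sigma(t_j)}(t-t_j)}$ grows, but it is controlled because the flee time bound caps $t-t_j\le\eta_{\sigma(t_j)}$, so the last-mode contribution stays uniformly bounded; this is the only place where one must argue carefully rather than simply cite the product bound.
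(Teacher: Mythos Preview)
Your proposal is correct and is precisely the approach the paper intends: the paper does not write out a proof of this theorem but states explicitly that it ``follows along the lines of proofs of the earlier statements,'' meaning the flow-based argument of \Cref{thm:ResetsGivenConstraints} and \Cref{thm:DTforRSS_flow} with the dwell/flee thresholds computed mode by mode rather than uniformly. Your treatment of the residual factor on the final interval, including the flee-time cap when the terminal mode is unstable, is the only extra care needed beyond the product bound, and you have identified it correctly.
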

						
						\begin{theorem}[Generalization of \Cref{thm:ISS:flow}]
							The impulsive switched system~\eqref{eq:ISS} governed by the graph $\mathcal{G}$ and having a compact set of impulses $\mathcal{I}$ is stable under arbitrary impulses for all $\sigma\in\mathcal{S}^{\mathcal{G}}\cap \mathcal{S}^{M}\left(\{\tau_p\}_{p\in\mathfrak{s}},\{\eta_p\}_{p\in\mathfrak{u}}\right)$, where \[
							\tau_p=\max_{q\colon (p,q)\in\mathcal{E}(\mathcal{G})}\max_{M\in\mathcal{I}}\frac{\ln \left(c_p\, \|P_q^{-1}M P_p\|\right)}{\lambda_p^*},\  \eta_p=-\max_{q\colon (p,q)\in\mathcal{E}(\mathcal{G})}\max_{M\in\mathcal{I}}\frac{\ln \left(c_p\, \|P_q^{-1}M P_p\|\right)}{\mu_p^*}.
							\]
						\end{theorem}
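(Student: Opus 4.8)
The plan is to follow the proof of \Cref{thm:ISS:flow} almost verbatim, the only change being that the single pair of constants $(\tau_I,\eta_I)$ is replaced by the mode-indexed families $\{\tau_p\}_{p\in\mathfrak{s}}$ and $\{\eta_p\}_{p\in\mathfrak{u}}$. I would start from the flow expression of the impulsive switched system: for $t\in[t_j,t_{j+1})$,
\[
x(t)=P_{\sigma(t_j)}{\rm{e}}^{J_{\sigma(t_j)}(t-t_j)}\left(\prod_{k=1}^{j}P_{\sigma(t_{k})}^{-1}I_{\omega(t_k)}P_{\sigma(t_{k-1})}{\rm{e}}^{J_{\sigma(t_{k-1})}(t_{k}-t_{k-1})}\right)P_{\sigma(t_{0})}^{-1}x(0),
\]
and the goal is to show that each factor in the product has spectral norm at most $1$ for every admissible $\omega$, so that the product stays bounded uniformly.

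First I would fix a switching instant $t_k$ and write $p=\sigma(t_{k-1})$, $q=\sigma(t_k)$; since $\sigma$ is $\mathcal{G}$-admissible we have $(p,q)\in\mathcal{E}(\mathcal{G})$, and $I_{\omega(t_k)}=M$ for some $M\in\mathcal{I}$. The corresponding factor $P_q^{-1}MP_p{\rm{e}}^{J_p(t_k-t_{k-1})}$ has norm at most $\|P_q^{-1}MP_p\|\,\|{\rm{e}}^{J_p(t_k-t_{k-1})}\|$. If $p\in\mathfrak{s}$, then $\sigma\in\mathcal{S}^{M}(\{\tau_p\}_{p\in\mathfrak{s}},\{\eta_p\}_{p\in\mathfrak{u}})$ gives $t_k-t_{k-1}\ge\tau_p$, so \Cref{rem:linearsystemdecay} yields $\|{\rm{e}}^{J_p(t_k-t_{k-1})}\|\le c_p{\rm{e}}^{-\lambda_p^*\tau_p}$, while the definition of $\tau_p$ furnishes $c_p\|P_q^{-1}MP_p\|\le{\rm{e}}^{\lambda_p^*\tau_p}$ for every $q$ with $(p,q)\in\mathcal{E}(\mathcal{G})$ and every $M\in\mathcal{I}$; multiplying the two estimates bounds the factor norm by $1$. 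The case $p\in\mathfrak{u}$ is symmetric: $t_k-t_{k-1}\le\eta_p$ gives $\|{\rm{e}}^{J_p(t_k-t_{k-1})}\|\le c_p{\rm{e}}^{\mu_p^*\eta_p}$, and the definition of $\eta_p$ gives $c_p\|P_q^{-1}MP_p\|\le{\rm{e}}^{-\mu_p^*\eta_p}$, again forcing the factor norm below $1$.

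With each factor bounded by $1$, the parenthesised product has norm at most $1$, whence $\|x(t)\|\le\|P_{\sigma(t_j)}{\rm{e}}^{J_{\sigma(t_j)}(t-t_j)}\|\,\|P_{\sigma(t_0)}^{-1}\|\,\|x(0)\|$. I would then argue that the leading term is uniformly bounded over $j$: for a stable leading mode ${\rm{e}}^{J_{\sigma(t_j)}(t-t_j)}$ decays and is bounded by $c_{\sigma(t_j)}$, while for an unstable leading mode $t-t_j<t_{j+1}-t_j\le\eta_{\sigma(t_j)}$ keeps it below $c_{\sigma(t_j)}{\rm{e}}^{\mu^*_{\sigma(t_j)}\eta_{\sigma(t_j)}}$. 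Since $\mathcal{P}$ is finite, this yields a single constant $C$ with $\|x(t)\|\le C\|x(0)\|$ for all $t\ge0$, which is stability; and because every estimate used only $M\in\mathcal{I}$ and never the particular value of $\omega$, the bound holds for every $\omega$, giving stability under arbitrary impulses.

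The step requiring the most care is the bookkeeping that makes the per-mode maxima over $q$ with $(p,q)\in\mathcal{E}(\mathcal{G})$ and over $M\in\mathcal{I}$ match exactly the factors that can occur along a $\mathcal{G}$-admissible trajectory, together with the uniform bound on the leading segment for unstable leading modes; there is no deeper obstacle, as the argument is otherwise identical to \Cref{thm:ISS:flow}. Compactness of $\mathcal{I}$, and the standing assumption that the impulses are nonzero (so that $\ln(c_p\|P_q^{-1}MP_p\|)$ is finite and continuous on $\mathcal{I}$), guarantee that these maxima are attained and finite, so that $\tau_p$ and $\eta_p$ are well defined.
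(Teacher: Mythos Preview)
Your proposal is correct and follows exactly the approach the paper intends: the paper states this result without proof, noting that it ``follows along the lines of proofs of the earlier statements,'' and your argument is precisely the mode-dependent analogue of the proof of \Cref{thm:ISS:flow}, bounding each factor $P_q^{-1}MP_p\,{\rm{e}}^{J_p(t_k-t_{k-1})}$ by $1$ via the definitions of $\tau_p$ and $\eta_p$ and then handling the leading segment uniformly over the finite index set $\mathcal{P}$.
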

						
						\begin{theorem}[Generalization of \Cref{thm:dtRSS}]
							Given the reset switched system~\eqref{eq:RSS} with all subsystems stable, assume that for some positive constants $\{\tau_p\}_{p\in\mathcal{P}}$, there exists a collection of symmetric, positive definite matrices $\{Q_p\in\,M_n(\mathbb{R})\colon\ p\in\mathcal{P}\}$ such that \begin{eqnarray*}
								Q_pA_p+A_p^\top Q_p&<&0\,\,\,\,\,\text{ for all }\, p\in\mathcal{P},\\
								{\rm{e}}^{A_q^\top\tau_q}R_{(q,p)}^\top Q_p R_{(q,p)}{\rm{e}}^{A_q\tau_q}&\le& Q_q\,\text{ for all }\,p,q\in\mathcal{P},
							\end{eqnarray*}
							then the reset switched system~\eqref{eq:RSS} is stable for all signals $\sigma\in\mathcal{S}_s^M[\{\tau_p\}_{p\in\mathcal{P}}]$.
						\end{theorem}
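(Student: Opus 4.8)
The plan is to mimic the proof of \Cref{thm:dtRSS} verbatim, replacing the single dwell time $\tau$ by the mode-dependent dwell times $\{\tau_p\}_{p\in\mathcal{P}}$ and tracking the active mode at each switching instant. First I would use the strict inequality $Q_pA_p+A_p^\top Q_p<0$ to extract, for each $p\in\mathcal{P}$, a constant $\lambda_p>0$ so that $Q_pA_p+A_p^\top Q_p\le -2\lambda_p Q_p$; setting $\lambda=\min_p\lambda_p>0$ then gives a uniform exponential decay rate in between switching instants. As in \Cref{thm:dtRSS}, I define the piecewise Lyapunov function $V(x(t))=V_{\sigma(t)}(x(t))$ with $V_p(x)=x^\top Q_p x$, so that $\dot V(x(t))\le -2\lambda V(x(t))$ on each open interval $(t_{k-1},t_k)$.

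The key step is the handoff inequality at a switching instant. Suppose $\sigma$ switches from $p=\sigma(t_k^-)$ to $q=\sigma(t_{k+1})$ at time $t_{k+1}$; since $\sigma\in\mathcal{S}_s^M[\{\tau_p\}_{p\in\mathcal{P}}]$, the time spent in mode $p$ satisfies $t_{k+1}-t_k\ge\tau_p$. I would then compute, exactly as before,
\begin{align*}
V_q(x(t_{k+1}))&=x(t_k)^\top {\rm{e}}^{A_p^\top(t_{k+1}-t_k)}R_{(p,q)}^\top Q_q R_{(p,q)}{\rm{e}}^{A_p(t_{k+1}-t_k)}x(t_k)\\
&\le x(t_k)^\top Q_p x(t_k)=V_p(x(t_k)).
\end{align*}
The crucial point is that the second hypothesis ${\rm{e}}^{A_p^\top\tau_p}R_{(p,q)}^\top Q_q R_{(p,q)}{\rm{e}}^{A_p\tau_p}\le Q_p$ is stated precisely at the mode-dependent dwell time $\tau_p$ associated with the \emph{outgoing} mode $p$, so the inequality applies directly provided the time spent in $p$ is at least $\tau_p$. (I would note the index convention: in the hypothesis the roles of $p,q$ are written as $R_{(q,p)}$ and ${\rm{e}}^{A_q\tau_q}$, so one must match the outgoing mode carefully.)

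The monotonicity across switches, $V_{\sigma(t_j)}(x(t_j))\le V_{\sigma(t_i)}(x(t_i))$ for $j\ge i$, then follows by induction, and combining with the exponential decay on each interval yields, for $t\in[t_i,t_{i+1})$,
\[
V_{\sigma(t)}(x(t))\le {\rm{e}}^{-2\lambda(t-t_i)}V_{\sigma(t_i)}(x(t_i))\le V_{\sigma(t_0)}(x(t_0)).
\]
Finally the norm bound follows from $\lambda_{\min}(Q_p)\|x\|^2\le x^\top Q_p x\le\lambda_{\max}(Q_p)\|x\|^2$ and equivalence of norms, giving $\|x(t)\|\le\sqrt{\max_p\lambda_{\max}(Q_p)/\min_p\lambda_{\min}(Q_p)}\,\|x(0)\|$, which establishes stability. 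The only genuine subtlety — and the step I would watch most carefully — is the index matching in the handoff inequality: one must ensure the dwell time and the matrix exponential in the hypothesis correspond to the mode being exited, not the mode being entered, since otherwise the inequality at $\tau_p$ would not be the one needed. No new ideas beyond \Cref{thm:dtRSS} are required; the proof is a routine mode-dependent bookkeeping of that argument.
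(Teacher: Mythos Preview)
Your proposal is correct and is precisely the approach the paper intends: the paper states this generalization without proof, noting only that ``the results follow along the lines of proofs of the earlier statements,'' and your argument is exactly the mode-dependent bookkeeping of the proof of \Cref{thm:dtRSS}. Your attention to the index convention---that the hypothesis, after relabeling, reads ${\rm e}^{A_p^\top\tau_p}R_{(p,q)}^\top Q_q R_{(p,q)}{\rm e}^{A_p\tau_p}\le Q_p$ with $\tau_p$ attached to the outgoing mode---is the only point requiring care, and you have it right.
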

						
						\begin{theorem}[Generalization of \Cref{thm:MLF:issue5}]
							Given the impulsive switched system~\eqref{eq:ISS} with all subsystems stable with a compact set of impulses $\mathcal{I}$, assume that for some positive constants $\{\tau_p\}_{p\in\mathcal{P}}$, there exists a collection of symmetric, positive definite matrices $\{Q_p\in\,M_n(\mathbb{R})\colon\ p\in\mathcal{P}\}$  such that \begin{eqnarray*}
								Q_pA_p+A_p^\top Q_p&<&0,\,\,\,\,\,\text{ for all }\, p\in\mathcal{P},\\
								{\rm{e}}^{A_q^\top\tau_q}M^\top Q_p M{\rm{e}}^{A_q\tau_q}&\le& Q_q,\,\text{ for all }\,p,q\in\mathcal{P}\text{ and }M\in\mathcal{I},
							\end{eqnarray*}
							are satisfied. Then the impulsive switched system~\eqref{eq:ISS} is stable under arbitrary impulses for all signals $\sigma\in\mathcal{S}_s^M[\{\tau_p\}_{p\in\mathcal{P}}]$.
						\end{theorem}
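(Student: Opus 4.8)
The plan is to reproduce the multiple Lyapunov function argument of \Cref{thm:dtRSS} and its adaptation \Cref{thm:MLF:issue5}, the only modification being that the uniform dwell time $\tau$ is replaced throughout by the mode-dependent dwell time $\tau_p$ attached to the active mode. First I would fix a signal $\sigma\in\mathcal{S}_s^M[\{\tau_p\}_{p\in\mathcal{P}}]$ and an arbitrary impulse assignment $\omega$, set $V_p(x)=x^\top Q_p x$, and define the piecewise function $V(x(t))=V_{\sigma(t)}(x(t))$. Since every subsystem is stable and the first family of inequalities gives $Q_pA_p+A_p^\top Q_p<0$, the function $V$ strictly decreases along trajectories between consecutive switching instants; moreover ${\rm{e}}^{A_p^\top s}Q_p{\rm{e}}^{A_p s}\le Q_p$ for every $s\ge 0$, which I would obtain by differentiating $s\mapsto {\rm{e}}^{A_p^\top s}Q_p{\rm{e}}^{A_p s}$ and invoking the first inequality.

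The core step is to show that $V$ does not increase across a switching instant, for any choice of impulse. Suppose the mode $p=\sigma(t_k)$ is active on $[t_k,t_{k+1})$ and the system switches to $q=\sigma(t_{k+1})$ with impulse $M=I_{\omega(t_{k+1})}\in\mathcal{I}$. Then $x(t_{k+1})=M\,{\rm{e}}^{A_p(t_{k+1}-t_k)}x(t_k)$, so that
\[
V_q(x(t_{k+1}))=x(t_k)^\top {\rm{e}}^{A_p^\top(t_{k+1}-t_k)}M^\top Q_q M\,{\rm{e}}^{A_p(t_{k+1}-t_k)}x(t_k),
\]
and I would compare this quantity to $V_p(x(t_k))=x(t_k)^\top Q_p x(t_k)$.

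The main obstacle, and the one place where the mode-dependence must be handled with care, is that the second family of inequalities is assumed only at the exact value $\tau_q$ (equivalently, after relabelling $p\leftrightarrow q$, at $\tau_p$ for the active mode $p$), whereas an admissible signal only guarantees $t_{k+1}-t_k\ge\tau_p=\tau_{\sigma(t_k)}$. To bridge this gap I would write $t_{k+1}-t_k=\tau_p+r$ with $r\ge 0$ and exploit the monotonicity of the positive semidefinite order under congruence: starting from the relabelled hypothesis ${\rm{e}}^{A_p^\top\tau_p}M^\top Q_q M\,{\rm{e}}^{A_p\tau_p}\le Q_p$, congruence by ${\rm{e}}^{A_p r}$ preserves the inequality, and then the decay estimate ${\rm{e}}^{A_p^\top r}Q_p{\rm{e}}^{A_p r}\le Q_p$ yields
\[
{\rm{e}}^{A_p^\top(t_{k+1}-t_k)}M^\top Q_q M\,{\rm{e}}^{A_p(t_{k+1}-t_k)}\le Q_p,
\]
so that $V_q(x(t_{k+1}))\le V_p(x(t_k))$. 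Because $M\in\mathcal{I}$ was arbitrary, this bound is independent of $\omega$, which is precisely what stability under arbitrary impulses requires.

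Finally I would chain these inequalities to conclude that $\{V_{\sigma(t_k)}(x(t_k))\}_{k}$ is non-increasing; combined with the strict decay between switches this gives $V_{\sigma(t)}(x(t))\le V_{\sigma(t_0)}(x(t_0))$ for all $t\ge 0$. The two-sided estimate $\lambda_{\min}(Q_p)\|x\|^2\le V_p(x)\le\lambda_{\max}(Q_p)\|x\|^2$ together with the equivalence of norms on $\mathbb{R}^n$ then produces $\|x(t)\|\le\sqrt{\max_p\lambda_{\max}(Q_p)/\min_p\lambda_{\min}(Q_p)}\,\|x(0)\|$, establishing stability of the impulsive switched system under arbitrary impulses for every $\sigma\in\mathcal{S}_s^M[\{\tau_p\}_{p\in\mathcal{P}}]$.
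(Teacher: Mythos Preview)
Your proposal is correct and follows essentially the same multiple Lyapunov function argument the paper uses for \Cref{thm:dtRSS} and \Cref{thm:MLF:issue5}; the paper does not supply a separate proof of this generalization, stating only that it follows along the lines of the earlier results. Your treatment of the extension from the hypothesis at exactly $\tau_p$ to arbitrary $t_{k+1}-t_k\ge\tau_p$, via congruence by ${\rm e}^{A_p r}$ and the decay estimate ${\rm e}^{A_p^\top r}Q_p{\rm e}^{A_p r}\le Q_p$, is in fact more explicit than the paper's corresponding step in the proof of \Cref{thm:dtRSS}.
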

						
						We now extend the result obtained for the case when both stable and unstable subsystems are present to obtain mode-dependent dwell times and flee times constraints for stability of reset switched system. We define the following sets:
						\begin{eqnarray*}
							N_s^p(0,t)=\#\{i\colon\, 0\le t_i< t,\,\sigma(t_i)=p\in\mathfrak{s}\} &,&
							N_u^p(0,t)=\#\{i\colon\, 0\le t_i< t,\,\sigma(t_i)=p\in\mathfrak{u}\},\\
							N(0,t)&=&\sum_{p\in\mathfrak{s}}N_s^p(0,t)+\sum_{p\in\mathfrak{u}}N_u^p(0,t).
						\end{eqnarray*}
						
						\begin{theorem}[Generalization of \Cref{thm:MLF:mixed}]
							For a given the reset switched system~\eqref{eq:RSS}, suppose there exist a collection of symmetric, positive definite matrices $\{Q_p\in\,M_n(\mathbb{R})\colon\ p\in\mathcal{P}\}$, positive constants $\lambda_p$, for $p\in \mathfrak{s}$, $\mu_p$, for $p\in\mathfrak{u}$, $\gamma_{(q,p)}$ ($\gamma_{(q,p)}<1$ when $\mathfrak{s}=\emptyset$) satisfying \begin{eqnarray*}
								Q_pA_p+A_p^\top Q_p\le -\lambda_p Q_p, \text{ for all }\,p\in\mathfrak{s} &,& Q_pA_p+A_p^\top Q_p\le\mu_p Q_p, \text{ for all }\,p\in\mathfrak{u},\\
								R_{(q,p)}^\top Q_pR_{(q,p)}&\le& \gamma_{(q,p)}Q_q, \text{ for all }\,p,q\in\mathcal{P}.
							\end{eqnarray*}
							Let $\gamma=\max_{p,q\in\mathcal{P}}\,\{\gamma_{(q,p)}\}$. Then the reset switched system~\eqref{eq:RSS} is stable for any signal $\sigma\in\nobreak\mathcal{S}^{M}\left(\{\tau_p\}_{p\in\mathfrak{s}},\{\eta_p\}_{p\in\mathfrak{u}}\right)$ satisfying \[\lim\sup_{t\to\infty}\left(-\sum_{p\in\mathfrak{s}}\lambda_p\, N_s^p(0,t)\,\tau_p+\sum_{p\in\mathfrak{u}}\mu_p\, N_u^p(0,t) \,\eta_p+N(0,t)\,\ln\gamma\right)<0.\]
						\end{theorem}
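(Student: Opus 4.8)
The plan is to adapt the multiple Lyapunov function argument of \Cref{thm:MLF:mixed} essentially verbatim, replacing the uniform dwell time $\tau$ and flee time $\eta$ by the mode-dependent quantities $\tau_p$ and $\eta_p$, and tracking the contribution of each mode separately through the refined counting functions $N_s^p$, $N_u^p$ rather than through the aggregate counts $N_s$, $N_u$. First I would fix a signal $\sigma\in\mathcal{S}^{M}\left(\{\tau_p\}_{p\in\mathfrak{s}},\{\eta_p\}_{p\in\mathfrak{u}}\right)$, set $V_p(x)=x^\top Q_px$, and define the piecewise function $V(x(t))=V_{\sigma(t)}(x(t))$ along the trajectory.

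The per-interval estimates are obtained exactly as before, the only change being that the time bound is now mode-dependent. On an interval $[t_k,t_{k+1})$ with active stable mode $p\in\mathfrak{s}$, the inequality $Q_pA_p+A_p^\top Q_p\le -\lambda_p Q_p$ gives $\dot V_p(x(t))\le -\lambda_p V_p(x(t))$, hence $V_p(x(t_{k+1}^-))\le {\rm{e}}^{-\lambda_p(t_{k+1}-t_k)}V_p(x(t_k))$; since $\sigma$ has mode-dependent dwell time, $t_{k+1}-t_k\ge \tau_p$ and therefore $V_p(x(t_{k+1}^-))\le {\rm{e}}^{-\lambda_p\tau_p}V_p(x(t_k))$. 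Symmetrically, for $p\in\mathfrak{u}$ the bound $Q_pA_p+A_p^\top Q_p\le \mu_p Q_p$ together with $t_{k+1}-t_k\le \eta_p$ yields $V_p(x(t_{k+1}^-))\le {\rm{e}}^{\mu_p\eta_p}V_p(x(t_k))$. The reset estimate is unchanged from the proof of \Cref{thm:MLF:mixed}: if $\sigma$ switches from mode $p$ to mode $q$ at $t_{k+1}$, the reset hypothesis gives $V_q(x(t_{k+1}))=x(t_{k+1}^-)^\top R^\top Q_q R\,x(t_{k+1}^-)\le \gamma_{(p,q)}V_p(x(t_{k+1}^-))\le \gamma\,V_p(x(t_{k+1}^-))$, where $R$ is the corresponding reset matrix.

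Telescoping these estimates along the switching sequence $t_0<t_1<\cdots<t_k$, and counting each mode with its own multiplicity, I would obtain the mode-dependent analogue of~\eqref{eq:MLF:BoundLyap},
\[
V_{\sigma(t_k)}(x(t_k))\le \gamma^{N(0,t_k)}\,{\rm{e}}^{-\sum_{p\in\mathfrak{s}}\lambda_p N_s^p(0,t_k)\tau_p+\sum_{p\in\mathfrak{u}}\mu_p N_u^p(0,t_k)\eta_p}\,V_{\sigma(t_0)}(x(t_0)).
\]
Taking logarithms, the scalar exponent is precisely the quantity inside the $\limsup$ of the hypothesis. Since this $\limsup$ is negative, there is a $T$ beyond which the exponent stays below a fixed negative constant, so the prefactor is eventually at most $1$ and $V_{\sigma(t_k)}(x(t_k))$ remains bounded by a fixed multiple of $V_{\sigma(t_0)}(x(t_0))$. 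Because on any single interval $V$ either decreases (stable mode) or grows by at most ${\rm{e}}^{\mu_p\eta_p}$ (unstable mode), the continuous-time value $V(x(t))$ is likewise bounded, and the norm equivalence $\lambda_{\min}(Q_p)\|x\|^2\le V_p(x)\le \lambda_{\max}(Q_p)\|x\|^2$ converts this into the required bound on $\|x(t)\|$, giving stability.

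The only real bookkeeping obstacle is the telescoping step: one must check that, because the decay/growth factor associated with a visit to mode $p$ is at most ${\rm{e}}^{-\lambda_p\tau_p}$ (stable) or ${\rm{e}}^{\mu_p\eta_p}$ (unstable) and each switch contributes a single factor at most $\gamma$, the product over the first $N(0,t_k)$ intervals collapses exactly into the prefactor $\gamma^{N(0,t_k)}$ and the exponent $-\sum_{p\in\mathfrak{s}}\lambda_p N_s^p(0,t_k)\tau_p+\sum_{p\in\mathfrak{u}}\mu_p N_u^p(0,t_k)\eta_p$. This is routine once the counting functions $N_s^p$, $N_u^p$, $N$ are used in place of $N_s$, $N_u$, $N$, and no analytic idea beyond those already present in \Cref{thm:MLF:mixed} is needed.
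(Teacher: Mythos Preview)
Your proposal is correct and follows exactly the route the paper intends: the paper states this generalization without proof, noting that ``the results follow along the lines of proofs of the earlier statements,'' and your adaptation of the argument of \Cref{thm:MLF:mixed}---replacing the uniform bounds $\tau,\eta$ by $\tau_p,\eta_p$ and the aggregate counts $N_s,N_u$ by $N_s^p,N_u^p$ to obtain the mode-dependent analogue of~\eqref{eq:MLF:BoundLyap}---is precisely that line.
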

						
						\begin{theorem}[Generalization of \Cref{thm:MLF:mixed:arbitrary}]
							Given an impulsive switched system~\eqref{eq:ISS} with a compact set of impulses $\mathcal{I}$. Let there exist a collection of symmetric, positive definite matrices $\{Q_p\in\,M_n(\mathbb{R})\colon\ p\in\mathcal{P}\}$, constants $\lambda_p>0$ for $p\in \mathfrak{s}$, $\mu_p>0$ for $p\in\mathfrak{u}$, and $\gamma_{(p,q)}\ge 1$ for $p,q\in\mathcal{P}$ satisfying \begin{eqnarray*}
								Q_pA_p+A_p^\top Q_p\le -\lambda_p Q_p, \text{for all }\,p\in\mathfrak{s} &,& Q_pA_p+A_p^\top Q_p\le \mu_p Q_p, \text{for all }\,p\in\mathfrak{u},\\
								M^\top Q_pM&\le& \gamma_{(q,p)}Q_q, \text{for all }\,p,q\in\mathcal{P}\text{ and }M\in\mathcal{I}.
							\end{eqnarray*}
							Let $\lambda=\min_{p\in\mathfrak{s}}\,\{\lambda_p\}$, $\mu=\max_{p\in\mathfrak{u}}\,\{\mu_p\}$, and $\gamma=\max_{p,q\in\mathcal{P}}\,\{\gamma_{(p,q)}\}$. Then the impulsive switched system~\eqref{eq:ISS} is stable under arbitrary impulses for any signal $\sigma\in\nobreak\mathcal{S}^{M}\left(\{\tau_p\}_{p\in\mathfrak{s}},\{\eta_p\}_{p\in\mathfrak{u}}\right)$ satisfying \[\lim\sup_{t\to\infty}\left(-\sum_{p\in\mathfrak{s}}\lambda_p\, N_s^p(0,t)\,\tau_p+\sum_{p\in\mathfrak{u}}\mu_p\, N_u^p(0,t) \,\eta_p+N(0,t)\,\ln\gamma\right)<0.\]
						\end{theorem}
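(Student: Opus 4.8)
The plan is to transcribe the multiple Lyapunov function argument of \Cref{thm:MLF:mixed}, exactly as \Cref{thm:MLF:mixed:arbitrary} does for the mode-independent case, but now retaining the individual rates $\lambda_p,\mu_p$ and the individual time bounds $\tau_p,\eta_p$ rather than collapsing them into the global $\lambda,\mu,\tau,\eta$. First I would fix an arbitrary impulse assignment $\omega$ and a signal $\sigma\in\mathcal{S}^{M}\left(\{\tau_p\}_{p\in\mathfrak{s}},\{\eta_p\}_{p\in\mathfrak{u}}\right)$, set $V_p(x)=x^\top Q_p x$ and $V(x(t))=V_{\sigma(t)}(x(t))$. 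On each interval $[t_k,t_{k+1})$ with active mode $p=\sigma(t_k)$, the first two hypotheses give $V(x(t))\le {\rm e}^{-\lambda_p(t-t_k)}V_p(x(t_k))$ when $p\in\mathfrak{s}$ and $V(x(t))\le {\rm e}^{\mu_p(t-t_k)}V_p(x(t_k))$ when $p\in\mathfrak{u}$. Because $\sigma$ obeys the mode-dependent constraints, $t_{k+1}-t_k\ge\tau_p$ for $p\in\mathfrak{s}$ and $t_{k+1}-t_k\le\eta_p$ for $p\in\mathfrak{u}$, so the factor accrued across a complete stable interval is at most ${\rm e}^{-\lambda_p\tau_p}$ and across a complete unstable interval at most ${\rm e}^{\mu_p\eta_p}$ (here the positivity of $\lambda_p,\mu_p$ is used to turn the time bound into a bound on the exponential).

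Next I would handle the jump at each switching instant. If $\sigma$ switches from $q$ to $p$ at $t_k$, the impulse is $M=I_{\omega(t_k)}\in\mathcal{I}$, and the third hypothesis, which holds for \emph{every} $M\in\mathcal{I}$, yields $V_p(x(t_k))=x(t_k^-)^\top M^\top Q_p M\,x(t_k^-)\le \gamma_{(q,p)}V_q(x(t_k^-))\le\gamma\,V_q(x(t_k^-))$, a bound independent of the particular $\omega$. Multiplying the per-interval factors and the per-jump factors along the switching sequence and grouping the interval factors by mode through the counters $N_s^p$, $N_u^p$ and the total count $N$, I would arrive at the mode-dependent analogue of \eqref{eq:MLF:BoundLyap}, namely
\[
V_{\sigma(t_k)}(x(t_k))\le \gamma^{N(0,t_k)}\exp\!\left(-\sum_{p\in\mathfrak{s}}\lambda_p N_s^p(0,t_k)\,\tau_p+\sum_{p\in\mathfrak{u}}\mu_p N_u^p(0,t_k)\,\eta_p\right)V_{\sigma(t_0)}(x(t_0)),
\]
valid for all $\omega$.

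Finally, writing $\gamma^{N(0,t_k)}={\rm e}^{N(0,t_k)\ln\gamma}$ shows that the combined exponent is precisely the quantity whose $\limsup$ is assumed negative; hence $V(x(t_k))$ stays bounded by a uniform multiple of $V(x(t_0))$ along the switching times. The within-interval estimate above bounds $V(x(t))$ on $[t_k,t_{k+1})$ by the constant $\max_{p\in\mathfrak{u}}{\rm e}^{\mu_p\eta_p}$ times $V(x(t_k))$, extending the bound to all $t$. The equivalence of $\|\cdot\|$ and $\|\cdot\|_{Q_p}$ through $\lambda_{\min}(Q_p)\|x\|^2\le V_p(x)\le\lambda_{\max}(Q_p)\|x\|^2$ then converts this into $\|x(t)\|\le K\|x(0)\|$ for a constant $K$ independent of $\omega$, establishing stability under arbitrary impulses.

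I expect the only delicate point to be the bookkeeping in the accumulation step: each completed interval must be paired with its own mode so that the stable-mode decay exponents and unstable-mode growth exponents assemble into the finer sums indexed by $N_s^p$ and $N_u^p$, rather than the coarser global counts $N_s,N_u$ appearing in \Cref{thm:MLF:mixed}. Everything else is a verbatim adaptation of the earlier proofs, with the uniformity of the third hypothesis over the compact set $\mathcal{I}$ supplying stability for \emph{arbitrary} impulses.
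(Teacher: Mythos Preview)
Your proposal is correct and follows precisely the route the paper intends: the paper states this theorem without proof, noting only that it ``follows along the lines of proofs of the earlier statements,'' and your argument is exactly that adaptation of the proof of \Cref{thm:MLF:mixed} (with the uniform jump bound from \Cref{thm:MLF:mixed:arbitrary}) in which the per-interval exponential factors are tagged by mode and accumulated via the refined counters $N_s^p$, $N_u^p$. Nothing further is required.
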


						\section{Examples}\label{sec:examples}
						
						In this section, we present some examples illustrating our results. We will use complex Jordan basis matrix having unit-norm columns for each subsystem unless otherwise specified. In the following examples, we will use the following subsystem matrix $A_i$ with the indicated Jordan decomposition $P_iJ_iP_i^{-1}$.
						
						\begin{table}[h!]
							\centering
							\begin{tabular}{|c|c|c|c|}
								\hline 
								$i$ & $A_i$& $P_i$ & $J_i$ \\ \hline\hline
								1,2& $\begin{psmallmatrix}
									-0.1&2\\-1&-0.1
								\end{psmallmatrix}$ & $\frac{1}{\sqrt{3}}\begin{psmallmatrix}
									i\,\sqrt{2}&-i\,\sqrt{2}\\1&1
								\end{psmallmatrix}$ &$\begin{psmallmatrix}
									-0.1-i\,\sqrt{2}&0\\ 0& -0.1+i\,\sqrt{2}
								\end{psmallmatrix}$	\\ \hline
								3 &$\begin{psmallmatrix}
									-0.1&-1\\2&-0.1
								\end{psmallmatrix}$ & $\sqrt{\frac{2}{3}}\begin{psmallmatrix}
									-i\,/\sqrt{2}&i\,/\sqrt{2}\\1&1
								\end{psmallmatrix}$& $\begin{psmallmatrix}
									-0.1-i\,\sqrt{2}&0\\ 0& -0.1+i\,\sqrt{2}
								\end{psmallmatrix}$\\ \hline
								4 &  $\begin{psmallmatrix}
									-5&-3&-4\\4&2&4\\0&0&-1
								\end{psmallmatrix}$ & $\begin{psmallmatrix}
									-1/\sqrt{2}&1/\sqrt{2}&2/\sqrt{21}\\1/\sqrt{2}&0&-4/\sqrt{21}\\0&-1/\sqrt{2}&1/\sqrt{21}
								\end{psmallmatrix}$ & $\begin{psmallmatrix}
									-2&0&0\\0&-1&0\\0&0&-1
								\end{psmallmatrix}$ \\ \hline
								5 & $\begin{psmallmatrix}
									0&2&1\\ -2&1&0\\1&-2&0
								\end{psmallmatrix}$ & $\begin{psmallmatrix}
									0&-1/\sqrt{3}&-1/\sqrt{3}\\
									-1/\sqrt{5}&\frac{1}{2\sqrt{3}}\left(-1+i\,\sqrt{3}\right)&\frac{1}{2\sqrt{3}}\left(-1-i\,\sqrt{3}\right)\\
									2/\sqrt{5}&1/\sqrt{3}&1/\sqrt{3}
								\end{psmallmatrix}$ & $\begin{psmallmatrix}
									1&0&0\\0&-i\, \sqrt{3}&0\\0&0&i\, \sqrt{3}
								\end{psmallmatrix}$  \\ \hline
								6 & $\begin{psmallmatrix}
									-2&1&0\\1&-2&0\\0&0&-3
								\end{psmallmatrix}$ & $\begin{psmallmatrix}
									0&-1/\sqrt{2}&1/\sqrt{2}\\0&1/\sqrt{2}&1/\sqrt{2}\\1&0&0
								\end{psmallmatrix}$ & $\begin{psmallmatrix}
									-3&0&0\\0&-3&0\\0&0&-1
								\end{psmallmatrix}$ \\ \hline
								7 & $\begin{psmallmatrix}
									-1&1&1\\0 &-2&2\\0&0&-3
								\end{psmallmatrix}$ &  $\begin{psmallmatrix}
									1/\sqrt{21}&-1/\sqrt{2}&1\\-4/\sqrt{21}&1/\sqrt{2}&0\\2/\sqrt{21}&0&0
								\end{psmallmatrix}$ & $\begin{psmallmatrix}
									-3&0&0\\0&-2&0\\0&0&-1
								\end{psmallmatrix}$ \\
								\hline
							\end{tabular}
							\caption{Subsystem matrices}
						\end{table}
												
						\begin{example}[Stable systems with destabilizing impulses]\label{eg:destabiss}
							This example is a well-known example from Liberzon~\cite{liberzon2003switching} for state-dependent switching, which can also be viewed as a time-dependent switched system. We present it as a reset switched system~\eqref{eq:RSS}. Consider the system $\dot{x}=A_1x$. Suppose the system is reset periodically at time instances $t_k=2^{-3/2}k\pi$, $k\in\mathbb{N}$ given by $x(t_k) =\begin{psmallmatrix}
								0&1\\-1&0
							\end{psmallmatrix} x(t_k^-)$. The linear reset map, despite satisfying $\left\|\begin{psmallmatrix}
								0&1\\-1&0
							\end{psmallmatrix}\right\|=1$, destabilizes the system. This is true since the discrete-time system $x(t_k)=\begin{psmallmatrix}
								0&1\\-1&0
							\end{psmallmatrix}{\rm{e}}^{A_1\frac{\pi}{2\sqrt{2}}}\, x(t_{k-1})$ is unstable as the spectral radius $\rho\left(\begin{psmallmatrix}
								0&1\\-1&0
							\end{psmallmatrix}{\rm{e}}^{A_1\frac{\pi}{2\sqrt{2}}}\right)=1.26>1$, refer~\cite[Corollary 1.1]{jungers}.
							However we know that if the resets occur not too fast, then the stability can be ensured. To obtain the dwell time guaranteeing stability of the reset switched system, we use \Cref{thm:dtRSS} having subsystems $A_1=A_2$ and reset maps $R_{(1,2)}=R_{(2,1)}=\begin{psmallmatrix}
								0&1\\-1&0
							\end{psmallmatrix}$. The dwell time constraint for stability using \Cref{thm:DTforRSS_flow} is $\tau=3.47$. The Robust Control Toolbox in MATLAB 2021a returns the dwell time constraint for stability of the reset switched system as $\tau=3.4032$. This means that the reset system defined above is stable when the time difference between any two resets is at least 3.4032 time units. 
					\end{example}

						\begin{example}[Dwell time and mode-dependent dwell time constraints]
							\label{eg:bplssbistab}
							Consider a switched system~\eqref{eq:system} with two subsystem matrices $A_2$ and $A_3$. By \Cref{cor:StabilizingResets}, the reset switched system having subsystems $A_2$ and $A_3$ can be stabilized under arbitrary switching by using resets $R_{(2,3)}=\text{diag}\left(-1/\sqrt{2}, \sqrt{2}\right)$ and $R_{(3,2)}=\text{diag}\left(-\sqrt{2}, 1/\sqrt{2}\right)$ at the switching instances. This means that the corresponding reset switched system~\eqref{eq:RSS} is stable under arbitrary switching. Further for the system under consideration with $R_{(2,3)}=\begin{psmallmatrix}
								2&3\\1&2
							\end{psmallmatrix}$, $R_{(3,2)}=\begin{psmallmatrix}
								1&-2\\-2&5
							\end{psmallmatrix}$, the corresponding reset switched system is stable for any signal with dwell time $\tau=17.1$ using \Cref{thm:dtRSS}, computed using the Robust Control Toolbox in MATLAB R2021a. The dwell time obtained using \Cref{thm:DTforRSS_flow} is $\tau_R=20.34$ (with Jordan decompositions given above). The mode-dependent dwell time for stability obtained using \Cref{thm:gen:DTforRSS} yields $\tau_2=20.34$ and $\tau_3=14.96$. As discussed before, using mode-dependent dwell time constraints allows us to obtain a larger class of stabilizing signals than the dwell time constraint.
						\end{example}
						
						\begin{example}[Stability under arbitrary impulses]\label{eg:3dArbReset}
							Consider the switched system~\eqref{eq:system} with three subsystems $A_8=\begin{psmallmatrix}
								-5&3&-3\\0&-2&2\\0&0&-1
							\end{psmallmatrix}$, $A_9=\begin{psmallmatrix}
								-2&2&-1\\4&3&-4\\7&10&-10
							\end{psmallmatrix}$ and $A_{10}=\begin{psmallmatrix}
								-1&-2&-3\\1&0&1\\0&-1&-3
							\end{psmallmatrix}$ with the set of impulses $\mathcal{I}=\text{conv}\{M_1,M_2,M_3\}$, where $M_1=\begin{psmallmatrix}
								-2&1&0\\0&2&-1\\3&0&0
							\end{psmallmatrix}$, $M_2=\begin{psmallmatrix}
								-3&2&-1\\1&4&2\\-2&-1&1
							\end{psmallmatrix}$ and $M_3=\begin{psmallmatrix}
								1&1&-1\\2&0&2\\1&0&3
							\end{psmallmatrix}$. Then the conditions~\eqref{eq:mlf1cpt} and~\eqref{eq:mlf2cpt} in \Cref{thm:MLF:issue5} reduce to a set of 21 LMIs as discussed in \textit{point 1} of \Cref{sec:convexhull}. The impulsive switched system~\eqref{eq:ISS} is, thus, stable under arbitrary impulses for any signal with dwell time $\tau=2.81$. Using \Cref{thm:ISS:flow} and \textit{point 3} in \Cref{sec:convexhull}, the dwell time for stability under arbitrary impulses is obtained as $\tau_A=3.48$.
						\end{example}

						\begin{example}[Dwell-flee relation]\label{eg:RSS:mixed}
							Consider a reset switched system with two subsystems $A_4$ and $A_5$, and reset maps $R_{(4,5)}=\begin{psmallmatrix}
								3&5&-2\\-6&-4&1\\3&1&2
							\end{psmallmatrix}$, $R_{(5,4)}=\begin{psmallmatrix}
								-7&3&0\\0&-7&-3\\-5&0&4
							\end{psmallmatrix}$. 
							The condition~\eqref{eq:acyclic_cond} with $\epsilon=1$ is not satisfied by Jordan basis matrices $P_4$ and $P_5$ of $A_4$ and $A_5$, respectively. However, upon appropriate scaling as per \Cref{lemma:acyclic_cond}, the Jordan basis matrices $P_4$ and $(10^{-3}) P_5$ of $A_4$ and $A_5$, respectively, satisfy condition~\eqref{eq:acyclic_cond} with $\epsilon=1$ and we obtain $\tau_R=6.96$, $\eta_R=2.33$ using \Cref{thm:DTforRSS_flow}. 
							
							
							Moreover, the Robust Control Toolbox shows that the conditions of \Cref{thm:MLF:mixed} are feasible for $\lambda=\lambda_1=1$, $\mu=\mu_2=2$ and $\gamma=75$. This implies that the reset switched system is stable for any signal $\mathcal{S}[\tau,\eta]$ satisfying\\ $\lim\sup_{t\to\infty}\left(-N_s(0,t)\,\tau+2\, N_u(0,t) \,\eta+N(0,t)\,\ln 75\right)<0$, which can be reduced to $\tau>2\eta+2\ln 75$. This reduction is possible since $	\lim_{t\to\infty}\,N_s(0,t)/N_u(0,t)=1$, and hence $\lim_{t\to\infty}\,N(0,t)/N_u(0,t)=\lim_{t\to\infty}\,N(0,t)/N_s(0,t)=1$. As a consequence, the switched system is stable under a periodic switching signal $\sigma_2$ which spends exactly $\tau=14.64$ time in stable subsystem and exactly $\eta=3$ in unstable subsystem.
							
						\end{example}
						
						\begin{example}[Scope for improvements]\label{eg:scope} Through this example, we establish that the results obtained in this paper can be further improved, that is, lower dwell time constraints can be obtained. The dwell time calculations in this section have been done by taking complex Jordan basis matrix $P_i$ of $A_i$ having unit-norm columns, and by analyzing the flow in the Euclidean norm. The unit-norm column condition is just used for the sake of uniformity. Any choice of Jordan basis matrices for the subsystems would yield different bounds as discussed earlier. Consider a reset switched system with two subsystem matrices $A_6$ and $A_7$ and reset matrices $R_{(6,7)}=\begin{psmallmatrix}
								0&1&0\\0&0&1\\1&0&0
							\end{psmallmatrix}$ and $R_{(7,6)}=\begin{psmallmatrix}
								-1&0&1\\0&1&0\\0&0&-1
							\end{psmallmatrix}$. Using \Cref{thm:DTforRSS_flow}, we obtain the dwell time for stability as $\tau_R=1.44$. Note that 
							\begin{itemize}
								\item if the unit-norm column condition is dropped and, instead, we use Jordan basis matrices $V_6=\begin{psmallmatrix}
									0&-1&1\\0&1&1\\1&0&0
								\end{psmallmatrix}$ and $V_7=\begin{psmallmatrix}
									1&-1&1\\-4&1&0\\2&0&0
								\end{psmallmatrix}$ of $A_6$ and $A_7$, respectively, we obtain the dwell time as $\max\left\{\ln \|V_7^{-1}R_{(6,7)}V_6\|,\,\ln \|V_6^{-1}R_{(7,6)}V_7\|\right\}=1.38$.
								\item if the underlying norm is the ellipsoidal norm $\|x\|_A=\sqrt{x^\top A x}$, where $A$ is a real positive definite symmetric matrix, then it induces an operator norm on $M_n(\mathbb{R})$. Moreover, for any $K\in M_n(\mathbb{R})$, $\|K\|_A=\sqrt{\lambda_{max}\left(A^{-1}K^\top A K\right)}$, refer~\cite{phdthesis}. Using this characterization, a lower dwell time constraint can be obtained when the underlying norm is $\|\cdot\|_A$ where $A=\text{diag}\left(2,1/2,1\right)$. We obtain $\tau_R^2=\max\left\{\ln \|V_7^{-1}R_{(6,7)}V_6\|_A,\,\ln \|V_6^{-1}R_{(7,6)}V_7\|_A\right\}=1.3$. 
							\end{itemize}
						\end{example}

						\section{Concluding remarks}
						
						In this paper, we consider reset (impulsive) switched systems. Using the flow of the switched system and multiple Lyapunov theory, we obtain time constraints on signals which stabilize these switched systems. The results obtained in this paper are simplified when there is a convex hull of reset/impulse matrices. The results are also extended to obtain mode dependent dwell time conditions for stability of reset (impulsive) switched systems. Extensions of these results to average dwell time and mode dependent average dwell time constraints is an ongoing project. While using the flow to obtain time constraints for stability, it is possible to obtain lower dwell time on considering other norms on the state space, as exhibited in \Cref{eg:scope}. This issue can be explored further. Furthermore, stability of systems on introduction of more general impulses can be investigated.
						
						\section{Declarations}
						
						\subsection*{Conflicts of Interest}

							Authors declare that they have no conflict of interest.
							
						\subsection*{Data Availability Statement}
						
							No data is used to support this study.

\bibliographystyle{plain}
\bibliography{References}
\end{document}